\definecolor{red}{rgb}{1,0.00,0.00}
\author{Ievgen Bondarenko, Daniele D'Angeli, Tatiana Nagnibeda
\footnote{The second author was supported by Austrian Science Fund project FWF P24028-N18. The third author
acknowledge the support of the Swiss National Science Foundation Grant PP0022-118946. }}
\title{\textbf{Ends of Schreier graphs and cut-points of limit spaces of self-similar groups}}
\newcommand{\xmo}{X^{-\omega}}
\newcommand{\xo}{X^\omega}
\newcommand{\limGs}[1][]{\mathcal{X}_{#1}}
\newcommand{\lims}[1][]{\mathscr{J}_{#1}}
\newcommand{\si}{\mathsf{s}}
\newcommand{\muls}{\mathsf{m}}
\newcommand{\cc}{c}
\newcommand{\ic}{ic}
\newcommand{\pc}{pc}
\newcommand{\tile}{\mathscr{T}}
\newcommand{\nucl}{\mathcal{N}}
\newcommand{\pcset}{\mathscr{P}}
\newcommand{\gr}{\Gamma}
\newcommand{\B}{\mathsf{B}}
\newcommand{\A}{\mathsf{A}}
\newcommand{\tm}{t}
\newcommand{\om}{o}
\newcommand{\Rs}{\hat{\mathbb{C}}}
\newcommand{\M}{\mathcal{M}}
\newtheorem{theor}{Theorem}
\newtheorem{prop}[theor]{Proposition}
\newtheorem{cor}[theor]{Corollary}
\newtheorem{lemma}{Lemma}
\theoremstyle{definition}
\newtheorem{defi}{Definition}
\newtheorem{example}{Example}
\newtheorem{rem}{Remark}
\begin{document}
\maketitle

\begin{abstract}
Every self-similar group acts on the space $\xo$ of infinite words over some alphabet $X$. We study the
Schreier graphs $\gr_w$ for $w\in\xo$ of the action of self-similar groups generated by bounded automata
on the space $\xo$. Using sofic subshifts we determine the number of ends for every Schreier graph
$\gr_w$. Almost all Schreier graphs $\gr_w$ with respect to the uniform measure on $\xo$ have one or two
ends, and we characterize bounded automata whose Schreier graphs have two ends almost surely. The
connection with (local) cut-points of limit spaces of self-similar groups is established.\vspace{0.2cm}

\noindent \textbf{Keywords}: self-similar group, Schreier graph, end of graph, bounded automaton, limit space, tile,
cut-point\vspace{0.2cm}

\noindent \textbf{Mathematics Subject Classification 2010}: 20F65, 05C63, 05C25\\

\end{abstract}


\tableofcontents

\section{Introduction}

One of the fundamental properties of fractal objects is self-similarity, which means that pieces of an
object are similar to the whole object. In the last twenty years the notion of self-similarity has
successfully penetrated into algebra. This lead to the development of many interesting constructions such
as self-similar groups and semigroups, iterated monodromy groups, self-iterating Lie algebras,
permutational bimodules, etc. The first examples of self-similar groups showed that these groups enjoy
many fascinating properties (torsion, intermediate growth, finite width, just-infiniteness and many
others) and provide counterexamples to several open problems in group theory. Later, Nekrashevych showed
that self-similar groups appear naturally in dynamical systems as iterated monodromy groups of
self-coverings and provide combinatorial models for iterations of self-coverings.

Self-similar groups are defined by their action on the space $X^{*}$ of all finite words over a finite
alphabet $X$ --- one the most basic self-similar object. A faithful action of a group $G$ on $X^{*}$ is
called self-similar if for every $x\in X$ and $g\in G$ there exist $y\in X$ and $h\in G$ such that
$g(xv)=yh(v)$ for all words $v\in X^{*}$. The self-similarity of the action is reflected in the property
that the action of any group element on a piece $xX^{*}$ (all words with the first letter $x$) of the
space $X^{*}$ can be identified with the action of another group element on the whole space $X^{*}$. We
can also imagine the set $X^{*}$ as the vertex set of a regular rooted tree with edges $(v,vx)$ for $x\in
X$ and $v\in X^{*}$. Then every self-similar group acts by automorphisms on this tree. Alternatively,
self-similar groups can be defined as groups generated by the states of invertible Mealy automata which
are also known as automaton groups or groups generated by automata.  All these interpretations come from
different applications of self-similar groups in diverse areas of mathematics: geometric group theory,
holomorphic dynamics, fractal geometry, automata theory, etc. (see
\cite{self_sim_groups,fractal_gr_sets,GNS} and the references therein).

In this paper we consider Schreier graphs of self-similar actions of groups. Given a group $G$ generated by
a finite set $S$ and acting on a set $M$, one can associate to it the (simplicial) Schreier graph
$\Gamma(G,M, S)$: the vertex set of the graph is the set $M$, and two vertices $x$ and $y$ are adjacent if
and only if there exists $s\in S\cup S^{-1}$ such that $s(x)=y$. Schreier graphs are generalizations of the
Cayley graph of a group, which corresponds to the action of a group on itself by the multiplication from
the left.

Every self-similar group $G$ preserves the length of words in its action on the space $X^{*}$. We then
have a family of natural actions of $G$ on the sets $X^n$ of words of length $n$ over $X$. From these
actions one gets a family of corresponding finite Schreier graphs $\{\Gamma_n\}_{n\geq 1}$. It was noticed
in \cite{barth_gri:spectr_Hecke} that for a few self-similar groups the graphs $\{\gr_n\}_{n\geq 1}$ are
substitution graphs (see \cite{Previte}) --- they can be constructed by a finite collection of vertex
replacement rules
--- and, normalized to have diameter one, they converge in the Gromov-Hausdorff metric to certain fractal
spaces. However, in general, the Schreier graphs of self-similar groups are neither substitutional nor
self-similar in any studied way (see discussion in \cite[Section~I.4]{PhDBondarenko}). Nevertheless, the
observation from \cite{barth_gri:spectr_Hecke} lead to the notion of the limit space of a self-similar
group introduced by Nekrashevych \cite{self_sim_groups}, which usually have fractal structure. Although
finite Schreier graphs $\{\gr_n\}_{n\geq 1}$ of a group do not necessary converge to the limit space, they
form a sequence of combinatorial approximations to it.

Any self-similar action can be extended to the set $\xo$ of right-infinite words over $X$ (boundary of the
tree $X^{*}$). Therefore we can also consider the uncountable family of Schreier graphs
$\{\Gamma_w\}_{w\in\xo}$ corresponding to the action of the group on the orbit of $w$. Each Schreier graph
$\Gamma_w$ can be obtained as a limit of the sequence $\{\Gamma_n\}_{n\geq 1}$ in the space
$\mathcal{G}^{*}$ of (isomorphism classes of) pointed graphs with pointed Gromov-Hausdorff topology. The
map $\theta: X^{\omega}\rightarrow \mathcal{G}^{*}$ sending a point $w$ to the isomorphism class of the
pointed graph $(\Gamma_w,w)$ pushes forward the uniform probability measure on the space $X^{\omega}$ to a
probability measure on the space of Schreier graphs. This measure is the so-called Benjamini-Schramm limit
of the sequence of finite graphs $\{\Gamma_n\}_{n\geq 1}$. Therefore the family of Schreier graphs $\gr_w$
and the limit space represent two limiting constructions associated to the action and to the sequence of
finite Schreier graphs $\{\Gamma_n\}_{n\geq 1}$. Structure of these Schreier graphs as well as some of
their properties such as spectra, expansion, growth, random weak limits, probabilistic models on them,
have been studied in various works over the last ten years, see
\cite{barth_gri:spectr_Hecke,gri_zuk:lampl_group,gri_sunik:hanoi,PhDBondarenko,
ddmn:GraphsBasilica,ddn:Ising,ddn:Dimer,GrowthSch,MN_AMS,ZigZag:bond}.

Most of the studied self-similar groups are generated by the so-called bounded automata introduced by
Sidki in \cite{sidki:circ}. The structure of bounded automata is clearly understood, which allows one to
deal fairly easily with groups generated by such automata. The main property of bounded automaton groups
is that their action is concentrated along a finite number of ``directions'' in the tree $X^{*}$. Every
group generated by a bounded automaton belongs to an important class of contracting self-similar groups,
which appear naturally in the study of expanding (partial) self-coverings of topological spaces and
orbispaces, as their iterated monodromy groups \cite{self_sim_groups,img}. Moreover, all iterated
monodromy groups of post-criticaly finite polynomials are generated by bounded automata. The limit space
of an iterated monodromy group of an expanding (partial) self-covering $f$ is homeomorphic to the Julia
set of the map $f$.  In the language of limit spaces, groups generated by bounded automata are precisely
those finitely generated self-similar groups whose limit spaces are post-critically finite self-similar
sets (see \cite{bondnek:pcf}). Such sets play an important role in the development of analysis on fractals
(see \cite{kigami:anal_fract}).

The main goal of this paper is to investigate the ends of the Schreier graphs $\{\gr_w\}_{w\in\xo}$ of
self-similar groups generated by bounded automata and the corresponding limit spaces. The number of ends
is an important asymptotic invariant of an infinite graph.  Roughly speaking, each end represents a
topologically distinct way to move to infinity inside the graph.  The most convenient way to define an end
in an infinite graph $\Gamma$ is by the equivalence relation on infinite rays in $\Gamma$, where two rays
are declared equivalent if their tails lie in the same connected component of $\Gamma \setminus F$ for any
finite subgraph $F$ of $\Gamma$. Any equivalence class is an \emph{end} of the graph $\Gamma$. The number
of ends is a quasi-isometric invariant. The Cayley graph of an infinite finitely generated group can have
one, two or infinitely many ends. Two-ended groups are virtually infinite cyclic and the celebrated
theorem of Stallings characterizes finitely generated groups with infinite number of ends.


\subsection*{Plan of the paper and main results}

Our main results can be summarized as follows.

\begin{itemize}
 \item Given a group generated by a bounded automaton we exhibit a constructive method that determines, for a given right-infinite word $w$, the number of ends of the Schreier graph $\Gamma_w$ (Section~\ref{Section_Ends}).

\item We show that the Schreier graphs $\Gamma_w$ of a group generated by a bounded automaton have either one or
two ends almost surely, and there are only finitely many Schreier graphs with more than two ends (Section~\ref{Section_number_ends}, Proposition~\ref{prop_more_ends}, Theorem~\ref{th_classification_two_ends}).

\item We classify bounded automata generating groups whose Schreier graphs $\Gamma_w$ have almost surely two ends (Theorem~\ref{th_classification_two_ends}). In the binary case, these groups agree with the class of groups defined by \v{S}uni\'{c} in
\cite{Zoran} (Theorem~\ref{th_Sch_2ended_binary}).

\item We exhibit a constructive method that describes cut-points of limit spaces of groups generated by
bounded automata (Section~\ref{section_component}).

\item In particular, we get a constructive method that describes cut-points of Julia sets of
post-critically finite polynomials (Section~\ref{section_component}).

\item We show that a punctured limit space has one or two connected components almost surely (Theorem~\ref{thm_punctured_tile_ends}).

\item We classify contracting self-similar groups whose limit space is homeomorphic to an interval or a circle (Corollary~\ref{cor_limsp_interval_circle}).

\end{itemize}

The paper is organized as follows. First, we determine the number of connected components in the Schreier
graph $\gr_n\setminus v$ with removed vertex $v$. The answer comes from a finite deterministic acceptor
automaton over the alphabet $X$ (Section~\ref{finite automaton section}) so that given a word
$v=x_1x_2\ldots x_n$ the automaton returns the number of components in $\gr_n\setminus v$
(Theorem~\ref{thm_number_of_con_comp}). Using this automaton we determine the number of finite and
infinite connected components in $\gr_w\setminus w$ for any $w\in\xo$. By establishing the connection
between the number of ends of the Schreier graph $\gr_w$ and the number of infinite components in
$\gr_w\setminus w$ (Proposition~\ref{prop_ends_limit}), we determine the number of ends of $\gr_w$
(Theorem~\ref{thm_number_of_ends}).

If a self-similar group acts transitively on $X^n$ for all $n\in\mathbb{N}$, the action on $\xo$ is
ergodic with respect to the uniform measure on $\xo$, and therefore the Schreier graphs $\gr_w$ for
$w\in\xo$ have the same number of ends almost surely. For a group
generated by a bounded automaton 
the ``typical'' number of ends is one or two, and we show that in most cases it is one, by characterizing
completely the bounded automata generating groups whose Schreier graphs $\gr_w$ have almost surely two
ends (Theorem~\ref{th_classification_two_ends}). In the binary case we show that automata giving rise to
groups whose Schreier graphs have almost surely two ends correspond to the adding machine or to one of the
automata defined by \v{S}uni\'{c} in \cite{Zoran} (Theorem~\ref{th_Sch_2ended_binary}).

In Section~\ref{Section_Cut-points} we recall the notion of the limit space of a contracting self-similar
group, and study the number of connected components in a punctured limit space for groups generated by bounded automata. We show that the number of ends in a typical Schreier graph $\gr_w$ coincides with the number of
connected components in a typical punctured neighborhood (punctured tile) of the limit space (Theorem~\ref{thm_punctured_tile_ends}). In particular, this number is equal to one or two. This fact is well-known for connected Julia sets of polynomials. While Zdunik \cite{zdunik} and Smirnov \cite{smirnov} proved that almost every point of a connected polynomial Julia set is a bisection point only when the polynomial is conjugate to a Chebyshev polynomial, we describe bounded automata whose limit spaces have this property. Moreover, we provide a constructive method to compute the number of connected components in a punctured limit space (Section~\ref{cut-points
section}). Finally, using the results about ends of Schreier graphs, we classify contracting self-similar
groups whose limit space is homeomorphic to an interval or a circle
(Corollary~\ref{cor_limsp_interval_circle}). This result agrees with the description of automaton groups
whose limit dynamical system is conjugate to the tent map given by Nekrashevych and \v{S}uni\'{c} in
\cite{img}.

In Section~\ref{Section_Preliminaries} we recall all needed definitions concerning self-similar groups,
automata and their Schreier graphs. In Section 5 we illustrate our results by performing explicit
computations for three concrete examples: the Basilica group, the Gupta-Fabrykowski group, and the
iterated monodromy group of $z^2+i$.

\vspace{0.2cm}\noindent\textbf{Acknowledgments.} The substantial part of this work was done while the first author was visiting the Geneva University, whose support and hospitality are gratefully acknowledged.

\section{Preliminaries}\label{Section_Preliminaries}

In this section we review the basic definitions and facts concerning self-similar groups, bounded automata
and their Schreier graphs. For more detailed information and for further references,
see~\cite{self_sim_groups}.

\subsection{Self-similar groups and automata}

Let $X$ be a finite set with at least two elements. Denote by $X^{*}=\{x_1x_2\ldots x_n | x_i\in X, n\geq
0\}$ the set of all finite words over $X$ (including the empty word denoted $\emptyset$) and with $X^n$
the set of words of length $n$. The length of a word $v=x_1x_2\ldots x_n\in X^n$ is denoted by $|v| = n$.

We shall also consider the sets $\xo$ and $\xmo$ of all right-infinite sequences $x_1x_2\ldots$, $x_i\in
X$, and left-infinite sequences $\ldots x_2x_1$, $x_i\in X$, respectively with the product topology of
discrete sets $X$. For an infinite sequence $w=x_1x_2\ldots$ (or $w=\ldots x_2x_1$) we use notation
$w_n=x_1x_2\ldots x_n$ (respectively, $w_n=x_n\ldots x_2x_1$). For a word $v$ we use notations $v^{\omega}=vv\ldots$ and $v^{-\omega}=\ldots vv$. The \textit{uniform Bernoulli measure} on
each space $\xo$ and $\xmo$ is the product measure of uniform distributions on $X$. The \textit{shift
$\sigma$} on the space $\xo$ (respectively, on $\xmo$) is the map which deletes the first (respectively,
the last) letter of a right-infinite (respectively, left-infinite) sequence.

\subsubsection{Self-similar groups}

\begin{defi}
A faithful action of a group $G$ on the set $X^{*}\cup\xo$ is called $\emph{self-similar}$ if for every
$g\in G$ and $x\in X$ there exist $h\in G$ and $y\in X$ such that
$$
g(xw)=yh(w)
$$
for all $w\in X^{*}\cup \xo$. The element $h$ is called the \textit{restriction} of $g$ at $x$ and is
denoted by $h=g|_x$.
\end{defi}

Inductively one defines the restriction $g|_{x_1x_2\ldots x_n}=g|_{x_1}|_{x_2}\ldots
|_{x_n}$ for every word $x_1x_2\ldots x_n\in X^{*}$. Restrictions have the following properties
\[
g(vu)=g(v)g|_v(u),\qquad g|_{vu}=g|_{v}|_{u}, \qquad (g\cdot h)|_v=g|_{h(v)}\cdot h|_v
\]
for all $g,h\in G$ and $v,u\in X^{*}$ (we are using left actions so that $(g h)(v)=g(h(v))$). If $X=\{1,2,\ldots,d\}$ then every element $g\in G$ can be uniquely represented by the tuple $(g|_1,g|_2,\ldots,g|_d)\pi_g$, where $\pi_g$ is the permutation induced by $g$ on the set $X$.

It follows from the definition that every self-similar group $G$ preserves the length of words under
its action on the space $X^{*}$, so that we have an action of the group $G$ on the set $X^n$ for every $n$.

The set $X^{*}$ can be naturally identified with a rooted regular tree where the root is labeled by the
empty word $\emptyset$, the first level is labeled by the elements in $X$ and the $n$-th level corresponds
to $X^n$. The set $\xo$ can be identified with the boundary of the tree. Every
self-similar group acts by automorphisms on this rooted tree and by homeomorphisms on its boundary.

\subsubsection{Automata and automaton groups}

Another way to introduce self-similar groups is through input-output automata and automaton groups. A
transducer \textit{automaton} is a quadruple $(S,X,\tm,\om)$, where $S$ is the set of states of automaton;
$X$ is an alphabet; $\tm: S\times X \rightarrow S$ is the transition map; and $\om: S\times X \rightarrow
X$ is the output map. We will use notation $S$ for both the set of states and the automaton itself. An
automaton is \textit{finite} if it has finitely many states and it is \textit{invertible} if, for all
$s\in S$, the transformation $\om(s, \cdot):X\rightarrow X$ is a permutation of $X$. An automaton can be
represented by a directed labeled graph whose vertices are identified with the states and for every state
$s\in S$ and every letter $x\in X$ it has an arrow from $s$ to $\tm(s,x)$ labeled by $x|\om(s,x)$. This
graph contains complete information about the automaton and we will identify them. When talking about
paths and cycles in automata we always mean directed paths and cycles in corresponding graph
representations.

Every state $s\in S$ of an automaton defines a transformation on the set $X^{\ast}\cup X^{\omega}$, which is again denoted by $s$ by abuse of notation, as follows. Given a word $x_1x_2\ldots$ over $X$, there exists a unique path in $S$ starting at the state $s$ and labeled by $x_1|y_1$, $x_2|y_2$, \ldots for some $y_i\in X$. Then $s(x_1x_2\ldots)=y_1y_2\ldots$. We always assume that our automata are minimal, i.e., different states define different transformations. The state of automata that defines the identity transformation is denoted by $1$.

An automaton $S$ is invertible when all transformations defined by its states are invertible. In this case one can consider the group
generated by these transformations under composition of functions, which is called the \textit{automaton
group} generated by $S$ and is denoted by $G(S)$. The natural action of every automaton group on the
space $\xo$ is self-similar, and vise versa, every self-similar action of a group $G$ can be given by the
automaton with the set of states $G$ and arrows $g\rightarrow g|_x$ labeled by $x|g(x)$ for all $g\in G$
and $x\in X$.


\subsubsection{Contracting self-similar groups}
A self-similar group $G$ is called \textit{contracting} if there exists a finite set $\nucl\subset G$ with
the property that for every $g\in G$ there exists $n\in\mathbb{N}$ such that $g|_v\in\nucl$ for all words
$v$ of length greater or equal to $n$. The smallest set $\nucl$ with this property is called the
\textit{nucleus} of the group. It is clear from definition that $h|_x\in\nucl$ for every $h\in\nucl$ and
$x\in X$, and therefore the nucleus $\nucl$ can be considered as an automaton. Moreover, every state of
$\nucl$ has an incoming arrow, because otherwise minimality of the nucleus would be violated. Also, the
nucleus is symmetric, i.e., $h^{-1}\in\nucl$ for every $h\in\nucl$.

A self-similar group $G$ is called \textit{self-replicating} (or recurrent) if it acts transitively on $X$,
and the map $g\mapsto g|_x$ from the stabilizer $Stab_G(x)$ to the group $G$ is surjective for some (every)
letter $x\in X$. It can be shown that a self-replicating group acts transitively on $X^n$ for every $n\geq
1$. It is also easy to see (\cite[Proposition~2.11.3]{self_sim_groups}) that if a finitely generated
contracting group is self-replicating then its nucleus $\nucl$ is a generating set.

\subsection{Schreier graphs vs tile graphs of self-similar groups}\label{section schreier vs tile}

Let $G$ be a group generated by a finite set $S$ and let $H$ be a subgroup of $G$. The
\textit{(simplicial) Schreier coset graph $\gr(G,S,H)$} of the group $G$ is the graph whose vertices are the
left cosets $G/H=\{gH : g\in G\}$, and two vertices $g_1H$ and $g_2H$ are adjacent if there exists $s\in
S$ such that $g_2H=sg_1H$ or $g_1H=sg_2H$.

\begin{defi}
Let $G$ be a group acting on a set $M$, then the corresponding (simplicial) \textit{Schreier graph
$\gr(G,S,M)$} is the graph with the set of vertices $M$, and two vertices $v$ and $u$ are adjacent if
there exists $s\in S$ such that $s(v)=u$ or $s(u)=v$.
\end{defi}

If the action $(G,M)$ is transitive, then
the Schreier coset graph $\gr(G,S,M)$ is isomorphic to the Schreier graph $\gr(G,S,Stab_G(m))$ of the group with
respect to the stabilizer $Stab_G(m)$ for any $m\in M$.

Let $G$ be a self-similar group generated by a finite set $S$. The sets $X^n$ are invariant under the
action of $G$, and we denote the associated Schreier graphs by $\gr_n=\gr_n(G,S)$. For a point $w\in\xo$ we
consider the action of the group $G$ on the $G$-orbit of $w$, and the associated Schreier graph is called
the \textit{orbital Schreier graph} denoted $\gr_w=\gr_w(G,S)$. For every $w\in\xo$ we have
$Stab_G(w)=\bigcap_{n\geq 1} Stab_G(w_n)$, where $w_n$ denotes the prefix of length $n$ of the infinite
word $w$. The connected component of the rooted graph $(\Gamma_n,w_n)$ around the root $w_n$ is exactly the
Schreier graph of $G$ with respect to the stabilizer of $w_n$. It follows immediately that the graphs
$(\Gamma_n,w_n)$ converge to the graph $(\Gamma_{w},w)$ in the pointed Gromov-Hausdorff topology \cite{gromov}.

Besides the Schreier graphs, we will also work with their subgraphs called tile graphs.

\begin{defi}
The \textit{tile graph} $T_n=T_n(G,S)$ is the graph with the set of vertices $X^n$, where two vertices $v$
and $u$ are adjacent if there exists $s\in S$ such that $s(v)=u$ and $s|_v=1$.
\end{defi}
The tile graph $T_n$ is thus a subgraph of the Schreier graph $\gr_n$. To define a tile graph for the action on
the space $\xo$, consider the same set of vertices as in $\gr_w$ and connect vertices $v$ and $u$ by an
edge if there exists $s\in S$ such that $s(v)=u$ and $s|_{v'}=1$ for some finite beginning $v'\in X^{*}$ of
the sequence $v$. The connected component of this graph containing the vertex $w$ is called the
\textit{orbital tile graph}~$T_w$. It is clear from the construction that we also have the convergence
$(T_n,w_n)\rightarrow (T_{w},w)$ in the pointed Gromov-Hausdorff topology.

The study of orbital tile graphs $T_w$ is based on the approximation by finite tile graphs $T_n$. Namely,
we will frequently use the following observation. Every tile graph $T_n$ can be considered as a subgraph of
$T_w$ under the inclusion $v\mapsto v\sigma^n(w)$. Indeed, if $v$ and $u$ are adjacent in $T_n$ then
$v\sigma^n(w)$ and $u\sigma^n(w)$ are adjacent in $T_w$. Moreover, every edge of $T_w$ appears in the graph
$T_n$ for all large enough $n$. Hence the graphs $T_n$ viewed as subgraphs of $T_w$ form a cover of $T_w$.

\subsection{Schreier graphs of groups generated by bounded automata}\label{subsection_BoundedAutomata}


\subsubsection{Bounded automata}


\begin{defi} (Sidki \cite{sidki:circ})
A finite invertible automaton $S$ is called \textit{bounded} if one of the following equivalent conditions holds:
\begin{enumerate}
  \item[1)] the number of paths of length $n$ in $S\setminus\{1\}$ is bounded independently on $n$;

  \item[2)] the number of left- (equivalently, right-) infinite
paths in $S\setminus\{1\}$ is finite;

  \item[3)] any two
nontrivial cycles in the automaton are disjoint and not connected by a path, where a cycle is called trivial if it is a loop at the trivial state;

  \item[4)] the number of left- (equivalently, right-) infinite sequences, which are read along left- (respectively, right-) infinite paths in $S\setminus\{1\}$, is finite.
\end{enumerate}

\end{defi}

The states of a bounded automaton $S$ can be classified as follows:\\
\indent -- \ a state $s$ is \textit{finitary} if there exists $n\in\mathbb{N}$ such that $s|_v=1$ for all $v\in X^n$;\\
\indent -- \ a state $s$ is \textit{circuit} if there exists a nonempty word $v\in X^n$ such that $s|_v=s$,\\
\indent \hspace{0.35cm} i.e., $s$ belongs to a cycle in $S$; in this case $s|_u$ is finitary for every $u\in X^n$, $u\neq
v$;\\
\indent -- \ for every state $s$ there exists $n\in\mathbb{N}$ such that for every $v\in X^n$ the state $s|_v$ is\\
\indent \hspace{0.35cm} either finitary or circuit.\\ By passing to a power $X^m$ of the alphabet $X$ every
bounded automaton can be brought to the \textit{basic form} (see
\cite[Proposition~3.9.11]{self_sim_groups}) in which the above items hold with $n=1$; in
particular, all cycles are loops, and $s|_x=1$ for every finitary state $s$ and every $x\in X$ (here for $m$
we can take an integer number which is greater than the diameter of the automaton and is a multiple of the
length of every simple cycle).

Every self-similar group $G$ generated by a bounded automaton is contracting (see
\cite[Theorem~3.9.12]{self_sim_groups}). Its nucleus is a bounded automaton, which contains only finitary
and circuit states (because every state of a nucleus should have an incoming arrow).

\subsubsection{Cofinality $\&$ post-critical, critical and regular sequences}

In this section we introduce the notion of critical and post-critical sequences that is fundamental for our analysis.

Let $G$ be a contracting self-similar group generated by an automaton $S$ and we assume $S=S^{-1}$. Let us describe the vertex sets of the orbital tile graphs $T_w=T_w(G,S)$. Two right- (or left-) infinite
sequences are called \textit{cofinal} if they differ only in finitely many letters. Cofinality is an
equivalence relation on $\xo$ and $\xmo$. The respective equivalence classes are called the cofinality
classes and they are denoted by $Cof(\cdot)$. The following statement characterizes vertices of tile
graphs in terms of the cofinal sequences.
\begin{lemma}\label{cofinality}
Suppose that the tile graphs $T_n(G,S)$ are connected for all $n\in\mathbb{N}$. Then for every $w\in\xo$
the cofinality class $Cof(w)$ is the set of vertices of the orbital tile graph $T_w(G,S)$.
\end{lemma}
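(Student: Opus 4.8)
The plan is to establish the two inclusions $V(T_w)\subseteq Cof(w)$ and $Cof(w)\subseteq V(T_w)$ separately, where $V(T_w)$ denotes the vertex set of the orbital tile graph $T_w$.

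For the inclusion $V(T_w)\subseteq Cof(w)$, I would first check that every single edge of $T_w$ joins two cofinal sequences. If $v$ and $u$ are adjacent in $T_w$, then by definition there is $s\in S$ with $s(v)=u$ and $s|_{v'}=1$ for some finite prefix $v'$ of $v$. Writing $v=v't$ with $t$ the corresponding infinite tail, the restriction identity $s(v't)=s(v')\,s|_{v'}(t)=s(v')\,t$ shows that $u$ and $v$ agree on $t$ and can differ only within their first $|v'|$ letters, so they are cofinal. Since $T_w$ is the connected component of $w$, every vertex of $T_w$ is joined to $w$ by a finite path of such edges, and transitivity of the cofinality relation then forces every vertex of $T_w$ to be cofinal with $w$.

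For the reverse inclusion $Cof(w)\subseteq V(T_w)$, which I expect to be the main point, I would use the connectivity of the finite tile graphs $T_n$ together with the embedding $v\mapsto v\sigma^n(w)$ of $T_n$ into $T_w$ recalled above. Given $u\in Cof(w)$, choose $n$ so large that $u$ and $w$ agree in all positions after the $n$-th; then $u=u_n\sigma^n(w)$ and $w=w_n\sigma^n(w)$ for some $u_n,w_n\in X^n$. By hypothesis $T_n$ is connected, so there is a path from $w_n$ to $u_n$ inside $T_n$. Applying the embedding edge by edge turns this into a path in $T_w$ from $w=w_n\sigma^n(w)$ to $u=u_n\sigma^n(w)$, whence $u$ lies in the connected component of $w$, that is $u\in V(T_w)$.

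The hard part is precisely this second inclusion, and concretely the passage from a path in the finite graph $T_n$ to a genuine path in $T_w$ staying inside the $G$-orbit of $w$. This is where connectivity of $T_n$ is essential: it guarantees that $u_n$ is reachable from $w_n$ within $X^n$ by successively applying states $s\in S$ whose restriction on the current length-$n$ prefix is trivial, and tracing the very same states starting from $w=w_n\sigma^n(w)$ keeps every intermediate vertex in the orbit of $w$ while each step remains an admissible tile-graph edge. Combining the two inclusions yields $Cof(w)=V(T_w)$.
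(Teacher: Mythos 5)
Your proof is correct and follows essentially the same route as the paper: one edge of $T_w$ preserves cofinality via the restriction identity $s(v't)=s(v')\,s|_{v'}(t)$, and conversely connectivity of $T_n$ transported along the inclusion $v\mapsto v\sigma^n(w)$ places every sequence cofinal with $w$ in the component of $w$. The only difference is cosmetic: the paper composes the path in $T_n$ into a single element $g\in G$ with $g(v)=u$ and $g|_v=1$, whereas you lift the path edge by edge, which is just a more explicit rendering of the same argument.
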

\begin{proof}
If $g(v)=u$ for $v,u\in X^{\omega}$ and $g|_{v'}=1$ for a
finite beginning $v'$ of $v$ then $v$ and $u$ are cofinal. Conversely, since every graph $T_n$ is connected, for every $v,u\in X^{n}$ there exists $g\in G$ such that $g(v)=u$ and $g|_v=1$. Hence $g(vw)=uw$ for
all $w\in X^{\omega}$ and every two cofinal sequences belong to the same orbital tile graph.
\end{proof}

We classify infinite sequences over $X$ as follows. A left-infinite sequence $\ldots x_2x_1\in\xmo$ is
called \textit{post-critical} if there exists a left-infinite path $\ldots e_2e_1$ in the automaton
$S\setminus \{1\}$ labeled by $\ldots x_2x_1|\ldots y_2y_1$ for some $y_i\in X$. The set $\pcset$ of all
post-critical sequences is called \textit{post-critical}. A right-infinite sequence $w=x_1x_2\ldots\in\xo$
is called \textit{critical} if there exists a right-infinite path $e_1e_2\ldots$ in the automaton
$S\setminus \{1\}$ labeled by $x_1x_2\ldots|y_1y_2\ldots$ for some $y_i\in X$. It follows that every shift
$\sigma^n(w)$ of a critical sequence $w$ is again critical, and for every $n\in\mathbb{N}$ there exists
$v\in X^n$ such that $vw$ is critical (here we use the assumption that every path in $S$ can be continued
to the left). It is proved in \cite[Proposition~IV.18]{PhDBondarenko} (see also
\cite[Proposition~3.2.7]{self_sim_groups}) that the set of post-critical sequences coincides with the set
of sequences that can be read along left-infinite paths in the nucleus of the group with removed trivial
state. The same proof works for critical sequences. Therefore the sets of critical and post-critical
sequences do not depend on the chosen generating set (as soon as it satisfies assumption that every state of the automaton $S$ has an incoming arrow, and $S^{-1}=S$). Finally,
a sequence $w\in\xo$ is called \textit{regular} if the cofinality class of $w$ does not contain critical
sequences, or, equivalently, if the shifted sequence $\sigma^n(w)$ is not critical for every $n\geq 0$.
Notice that the cofinality class of a critical sequence contains sequences which are neither regular nor
critical.

\begin{prop}\label{prop_properties_of_sequences}
Suppose that the automaton $S$ is bounded. Then the sets of critical and post-critical sequences are finite. Every post-critical sequence is pre-periodic. Every cofinality class contains not more than one critical sequence. The cofinality class of a regular sequence contains only regular sequences. If $w$ is regular, then there exists a finite beginning $v$ of $w$ such that $s|_v=1$ for every $s\in S$.
\end{prop}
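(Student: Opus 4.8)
The plan is to prove the five statements in the order listed, labelling them $(1)$–$(5)$; a single observation about infinite paths in $S\setminus\{1\}$ will carry most of the weight. Throughout I use the standing assumptions that $S$ is bounded, $S=S^{-1}$, and every state of $S$ has an incoming arrow; the same reasoning that shows the nucleus consists only of finitary and circuit states then applies and gives that every state of $S$ is finitary or circuit. The observation is that an infinite path in $S\setminus\{1\}$ cannot pass through a finitary state: if $t$ is finitary then $t|_v=1$ for every $v$ of some fixed length $n$, so any path leaving $t$ reaches the trivial state after $n$ steps. A right-infinite path has infinitely many steps ahead of each of its vertices, so it contains no finitary state at all; a left-infinite path can contain finitary states only within a bounded distance $N$ of its right end, where $N$ is the maximum of these $n$ over the finitely many finitary states. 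Beyond that bounded segment every state on the path is circuit, and since distinct nontrivial cycles are disjoint and not joined by a path (condition~(3)), all these circuit states lie on one and the same cycle $C$, around which the path simply winds.

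Assertion $(1)$ is then immediate from condition~(4): the post-critical (resp.\ critical) sequences are by definition exactly the input labels of left-infinite (resp.\ right-infinite) paths in $S\setminus\{1\}$, and condition~(4) says there are only finitely many such sequences. For assertion $(2)$, the observation shows that a left-infinite post-critical path eventually winds around a single cycle $C$, so its input label is periodic as one reads to the left past a bounded right-hand segment; hence the post-critical sequence is pre-periodic. In the critical case the observation is sharper: a right-infinite path has no finitary states at all, so it winds around a single cycle from the very first vertex, and therefore every critical sequence is purely periodic.

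Assertion $(3)$ now follows from pure periodicity. If $w=x_1x_2\ldots$ and $w'=x_1'x_2'\ldots$ are critical and cofinal, they agree past some position $N$ and are purely periodic with periods $p$ and $p'$; for any index $j$ I would pick $m=j+t\,\mathrm{lcm}(p,p')>N$ with $t$ large, so that $x_j=x_m=x_m'=x_j'$, whence $w=w'$. Assertion $(4)$ is essentially definitional: regularity of $w$ is the statement that the cofinality class of $w$ contains no critical sequence, which is a property of the whole class, so every sequence cofinal with a regular one is regular. The two formulations of regularity agree because the critical sequences are closed under the shift and, being purely periodic, extend to cofinal critical sequences: if $\sigma^n(w)$ is critical then continuing its cyclic path $n$ steps to the left produces a critical sequence sharing the tail $\sigma^n(w)$ with $w$, hence cofinal to $w$; conversely if $c$ is critical and cofinal to $w$ then $\sigma^N(w)=\sigma^N(c)$ is critical for $N$ large.

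Assertion $(5)$ is where the real argument lies, and the tool is König's lemma. The point is that $s|_v\neq 1$ for some $s\in S$ precisely when $v$ is the input label of a path in $S\setminus\{1\}$; call such a $v$ \emph{readable}. I would prove the contrapositive: if every prefix $w_n$ of $w$ is readable then $w$ is critical. Form the tree whose level-$n$ vertices are the paths in $S\setminus\{1\}$ reading $w_n$, a path's parent being obtained by deleting its last edge. Readability of every prefix makes every level nonempty, so the tree is infinite, and it is finitely branching because $S$ is finite; König's lemma then yields an infinite branch, which is exactly a right-infinite path in $S\setminus\{1\}$ reading $w$, so $w$ is critical, contradicting regularity. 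Hence some prefix $v$ of a regular $w$ is not readable, i.e.\ $s|_v=1$ for every $s\in S$. The main obstacle is precisely this compactness step, which converts the local data ``every prefix can be read'' into the global object ``an infinite path reading $w$''; here it is essential that boundedness keeps the tree finitely branching so that König's lemma applies.
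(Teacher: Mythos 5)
Your proof is correct and follows essentially the same route as the paper's: finiteness comes straight from the definition of boundedness, (pre-)periodicity from the cyclic structure of bounded automata (all infinite paths in $S\setminus\{1\}$ eventually wind around a single cycle), uniqueness of a critical sequence within a cofinality class from the fact that distinct periodic sequences cannot be cofinal, and the final claim from the fact that a regular sequence must drive every state to the trivial one. The only difference is packaging of that last step: the paper exploits determinism directly (from each state $s$ there is exactly one path reading $w$, and if it never reaches the trivial state then $w$ is critical), so your K\"onig's lemma argument, while valid, is heavier than needed, and the property that makes your tree finitely branching is the finiteness of $S$ (part of the definition of a bounded automaton), not boundedness per se.
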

\begin{proof}
The number of right- and left-infinite paths avoiding the trivial state is finite in every bounded
automaton. Thus the number of critical and post-critical sequences is finite.

The pre-periodicity of post-critical sequences and the periodicity of critical sequences follow from the
cyclic structure of bounded automata. The statement about the cofinality class of a critical sequence
follows immediately, because different periodic sequences can not differ only in finitely many letters.

Finally, if $w=x_1x_2\ldots\in\xo$ is regular, then starting from any state $s\in S$ and following the
edges labeled by $x_1|*,x_2|*,\ldots$ we will end at the trivial state. Hence there exists $n$ such that
$s|_{x_1x_2\ldots x_n}=1$ for all $s\in S$.
\end{proof}

Note that if the automaton $S$ is in the basic form, then every post-critical sequence is of the form
$y^{-\omega}$ or $y^{-\omega}x$ for some letters $x,y\in X$, and every critical sequence is of the form
$x^{\omega}$ for some $x\in X$.

\subsubsection{Inflation of graphs}\label{section_inflation}

Let $G$ be a group generated by a bounded automaton $S$. We assume $S=S^{-1}$ and every state of $S$ has an incoming arrow. In what follows we describe an inductive method (called \textit{inflation of graphs}) to construct the tile graphs $T_n=T_n(G,S)$ developed in \cite[Chapter~V]{PhDBondarenko}.

Let $p=\ldots x_2x_1\in\pcset$ be a post-critical sequence. The vertex $p_n=x_n\ldots x_2x_1$ of the graphs $\gr_n$ and $T_n$ will be called \textit{post-critical}. Since the post-critical set $\pcset$ is finite, for all large enough $n$, the post-critical vertices of $\gr_n$ and $T_n$ are in  one-to-one correspondence with the elements of $\pcset$ (just take $n$ large enough so that $p_n\neq q_n$ whenever $p\neq q$). Hence, with a slight abuse of
notations, we will consider the elements of the set $\pcset$ as the vertices of the graphs $\gr_n$ and
$T_n$.

Let $E$ be the set of all pairs $\{(p,x),(q,y)\}$ for $p,q\in\pcset$ and $x,y\in X$ such that there exists
a left-infinite path in the automaton $S$, which ends in the trivial state and is labeled by the pair
$px|qy$.

\begin{theor}\cite[Theorem~V.8]{PhDBondarenko}\label{th_tile_graph_construction}
To construct the tile graph $T_{n+1}$  take $|X|$ copies of the tile graph $T_n$, identify their sets of
vertices with $X^nx$ for $x\in X$, and connect two vertices $vx$ and $uy$ by an edge if and only if
$v,u\in\pcset$ and $\{(v,x),(u,y)\}\in E$.
\end{theor}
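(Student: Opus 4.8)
The plan is to construct $T_{n+1}$ explicitly and verify that the adjacency rule described matches the definition of the tile graph at level $n+1$. The key structural fact I want to exploit is that the vertex set $X^{n+1}$ decomposes as a disjoint union $\bigsqcup_{x\in X} X^n x$, where $X^n x$ denotes all words of length $n+1$ ending in the letter $x$. First I would observe that each block $X^n x$ carries a copy of the tile graph $T_n$: two vertices $vx$ and $ux$ in the same block are adjacent in $T_{n+1}$ precisely when $v$ and $u$ are adjacent in $T_n$. This should follow from the recursive structure of self-similar restrictions, namely $s|_{vx} = s|_v|_x$, together with the basic-form assumption that $s|_x = 1$ for finitary states. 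The point is that an edge of $T_n$ is witnessed by some $s$ with $s(v)=u$ and $s|_v=1$; such an $s$ fixes the last letter and restricts trivially, so it also witnesses an edge $vx \sim ux$ in $T_{n+1}$.

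Next I would analyze the edges of $T_{n+1}$ that join distinct blocks, i.e.\ edges between $vx$ and $uy$ with $x\neq y$. Here the defining condition is that there is some $s\in S$ with $s(vx)=uy$ and $s|_{v'x'}=1$ for some finite prefix $v'x'$ of $vx$. Following the computation $s(vx) = s(v)\,s|_v(x)$, I would trace the path in the automaton $S$ read along the input $vx$ and track when it reaches the trivial state. The claim is that such a cross-block edge can only occur when $v$ and $u$ are post-critical vertices: a non-post-critical vertex corresponds, in the inflation picture, to an input along which the automaton reaches the trivial state \emph{before} the last letter is read, so the last letter cannot be changed, forcing the edge to stay within a single block. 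Conversely, post-critical vertices are exactly those $p\in\pcset$ along which a left-infinite nontrivial path survives, so reading $px$ may still produce a nontrivial output on the final letter. The condition $\{(v,x),(u,y)\}\in E$ — the existence of a left-infinite path ending in the trivial state labeled $px|qy$ — is precisely the bookkeeping that records when this happens.

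The main obstacle I expect is the careful passage between the finite graphs $T_n$ and the left-infinite paths defining the set $E$. Concretely, I must show that for $n$ large enough the post-critical vertices $p_n = x_n\ldots x_1$ stabilize and are in bijection with $\pcset$ (this is already noted in the excerpt via finiteness of $\pcset$), and then argue that a cross-block adjacency at level $n+1$ is governed by a left-infinite tail of the automaton path rather than by the specific length $n$. In other words, I would need to verify that whether the inflation edge appears is independent of $n$ once $n$ is sufficiently large, so that the single set $E$ suffices for all levels. This requires using the cyclic structure of the bounded automaton: a left-infinite path reaching the trivial state factors through finitely many finitary and circuit states, so its contribution to edges depends only on the post-critical labels $p,q$ and the final letters $x,y$. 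I would assemble these observations into a two-way inclusion: every edge of $T_{n+1}$ either lies inside a block (reducing to an edge of $T_n$) or joins two post-critical vertices according to $E$, and conversely every such configuration yields an edge of $T_{n+1}$.

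Having established both directions, the construction in the statement follows: take $|X|$ copies of $T_n$ indexed by the last letter $x$, keep all intra-copy edges, and add the inter-copy edges dictated by $E$. I would close by remarking that the finiteness of $\pcset$ and $E$ (guaranteed by Proposition~\ref{prop_properties_of_sequences} for bounded automata) is what makes this a genuinely finite inflation rule, so the whole sequence $\{T_n\}$ is generated by a fixed finite amount of combinatorial data.
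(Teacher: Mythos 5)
The paper itself contains no proof of this statement (it is imported from \cite[Theorem~V.8]{PhDBondarenko}), so your argument has to stand on its own. It does not: there is a genuine gap, located exactly at your case split into intra-block ($x=y$) and cross-block ($x\neq y$) edges. You assert that $vx$ and $ux$ lying in the same block $X^nx$ are adjacent in $T_{n+1}$ \emph{precisely when} $v$ and $u$ are adjacent in $T_n$. Only one direction is true: edges of $T_n$ do lift to intra-block edges of $T_{n+1}$, but not conversely. An intra-block edge of $T_{n+1}$ is witnessed by some $s\in S$ with $s(vx)=ux$ and $s|_{vx}=1$, and this forces only $s|_v(x)=x$ and $(s|_v)|_x=1$ --- it does \emph{not} force $s|_v=1$. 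If $s|_v\neq 1$, the whole path of $s$ reading $v$ avoids the trivial state, hence (arrows into nontrivial states come only from nontrivial states, all arrows leaving $1$ being loops at $1$) it extends to a left-infinite path in $S\setminus\{1\}$, and appending the arrow $s|_v\xrightarrow{\ x|x\ }1$ exhibits $v=p_n$, $u=q_n$ as post-critical vertices with $\{(p,x),(q,x)\}\in E$. So $E$ may perfectly well contain pairs with \emph{equal} letters --- nothing in its definition requires $x\neq y$ --- and such pairs create intra-block edges which your decomposition never produces. As written, your two-way inclusion proves that $T_{n+1}$ equals the copies of $T_n$ together with the cross-block $E$-edges only, and this is in general a proper subgraph of $T_{n+1}$. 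The correct dichotomy is ``$s|_v=1$ versus $s|_v\neq 1$'', not ``same block versus different blocks''; the two do not coincide.

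This failure already occurs in one of the paper's own examples, the Basilica group, with generators $a,b$ acting by $a(0w)=1b(w)$, $a(1w)=0w$, $b(0w)=0a(w)$, $b(1w)=1w$ (the convention giving $\pcset=\{0^{-\omega},(01)^{-\omega},(10)^{-\omega}\}$). The left-infinite path $\cdots\xrightarrow{\ 0|0\ }a\xrightarrow{\ 0|1\ }b\xrightarrow{\ 1|1\ }1$ puts the equal-letter pair $\{(0^{-\omega},1),\,((01)^{-\omega},1)\}$ into $E$. At level $4$ this pair yields the edge $\{0001,1011\}$: indeed $a(0001)=1011$ and $a|_{0001}=b|_1=1$, so this is an edge of $T_4$, and both endpoints lie in the single block $X^3 1$. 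However $\{000,101\}$ is \emph{not} an edge of $T_3$: the only generators carrying $000$ to $101$ or back are $a$ and $a^{-1}$, whose relevant restrictions are $a|_{000}=b\neq 1$ and $a^{-1}|_{101}=b^{-1}\neq 1$. Hence your construction omits a genuine edge of $T_4$. (At level $3$ the same $E$-pair happens to be redundant --- $\{00,01\}$ is an edge of $T_2$ via $b^{-1}$ --- which is perhaps why the error is easy to overlook.) The repair is local rather than structural: split the edges of $T_{n+1}$ according to whether the witnessing generator satisfies $s|_v=1$ (then automatically $y=x$ and the edge descends from an edge of $T_n$ in the copy $X^nx$) or $s|_v\neq 1$ (then, by the extension argument above, the endpoints are post-critical and the pair lies in $E$, whether or not $x=y$); combined with the converse inclusions you already sketch --- lifting edges of $T_n$ to each block, and truncating a left-infinite path witnessing a pair of $E$ at depth $n+1$ to obtain a generator with trivial restriction --- this gives the theorem at every level.
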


The procedure of inflation of graphs given in Theorem~\ref{th_tile_graph_construction} can be described
using the graph $M$ with the vertex set $\pcset\times X$ and the edge set $E$, which we call the \textit{model graph} associated to the automaton $S$. The vertex $(p,x)$ of $M$ is called \textit{post-critical}, if the sequence $px$ is post-critical. Note that the post-critical vertices of $M$ are in one-to-one correspondence with elements of the post-critical set $\pcset$. Now if we ``place'' the
graph $T_n$ in the model graph instead of the vertices $\pcset\times x$ for each $x\in X$ such that the
post-critical vertices of $T_n$ fit with the set $\pcset\times x$, we get the graph $T_{n+1}$. Moreover,
the post-critical vertices of $M$ will correspond to the post-critical vertices of $T_{n+1}$.

In order to construct the Schreier graph $\gr_n$ we can take the tile graph $T_n$ and add an edge between
post-critical vertices $p$ and $q$ if $s(p)=q$ for some $s\in S$. Indeed, if $s(v)=u$ and $s|_v\neq 1$ (the
edge that does not appear in $T_n$) then $v$ and $u$ are post-critical vertices, and they should be
adjacent in $\gr_n$. Notice that there are only finitely many added edges (independently on $n$) and they
can be described directly through the generating set $S$. Namely, define the set $E(\gr\setminus T)$ as the
set of all pairs $\{p,q\}$ for $p,q\in\pcset$ such that there exists a left-infinite path in $S$ labeled by
the pair $p|q$ or $q|p$. Then if we take the tile graph $T_n$ and add an edge between $p_n$ and $q_n$ for
every $\{p,q\}\in E(\gr\setminus T)$, we get the Schreier graph $\gr_n$.

\section{Ends of tile graphs and Schreier graphs}\label{Section_Ends}

In this section we present the main results about the number of ends of Schreier graphs $\Gamma_w$ of groups
generated by bounded automata. Our method passes through the study of the same problem for the tile graphs $T_w$. First, we show that the number of ends of graphs $T_w$ can be deduced from the number of connected components in tile graphs with a vertex removed (see Proposition~\ref{prop_ends_limit}). We use the inflation procedure to construct a finite deterministic automaton $\A_{ic}$, which given a sequence $w\in\xo$ determines the number of infinite connected components in the graph $T_w$ with the vertex $w$ removed (see Proposition~\ref{prop_infcomp_A_ic}). Then we describe all sequences $w\in\xo$ such that $T_w$ has a given number of ends in terms of sofic subshifts associated to strongly connected components of the automaton $\A_{ic}$. Further we deduce the number of ends of Schreier graph $\Gamma_w$ (see Corollary \ref{remarktile}). After this, we pass to the study of the number of ends for a random Schreier graph $\Gamma_w$. We show that picking randomly an element $w$
in $X^{\omega}$, the graph $\Gamma_w$ has one or two ends (see Corollary~\ref{cor_one_or_two_ends_a_s}). The latter case is completely described (see Theorem~\ref{th_classification_two_ends}).


\subsection{Technical assumptions}

In what follows, except for a few special cases directly indicated, we make the following assumptions about
the studied self-similar groups $G$ and their generating sets $S$:\\
\\
\indent \hspace{0.3cm}  1. {\it The group $G$ is generated by a bounded automaton $S$.}\\
\\
\indent \hspace{0.3cm}  2. {\it The tile graphs $T_n=T_n(G,S)$ are connected.}\\
\\
\indent \hspace{0.3cm}  3. {\it Every state of the automaton $S$ has an incoming arrow, and $S^{-1}=S$.}\\
\\
Instead of the assumption 2 it is enough to require that the group acts transitively on $X^n$ for every
$n\geq 1$, i.e., the Schreier graphs $\gr_n(G,S)$ are connected. Then, even if the tile graphs $T_n(G,S)$
are not connected, there is a uniform bound on the number of connected components in $T_n(G,S)$ (see how
the Schreier graphs are constructed from the tile graphs after Theorem~\ref{th_tile_graph_construction}), and one can apply the developed methods to each component. The assumption 3 is technical, it
guaranties that every directed path in the automaton $S$ can be continued to the left. If the generating
set $S$ contains a state $s'$, which does not contain incoming edges, then $s|_x\in S\setminus\{s'\}$ for
every $s\in S$ and $x\in X$, and hence the state $s'$ does not interplay on the asymptotic properties of
the tile or Schreier graphs. Moreover, if the group is self-replicating, then the property 3 is always
satisfied, when we take its nucleus $\nucl$ as the generating set $S$.


\subsection{The number of ends and infinite components in tile graphs with a vertex removed}

For a graph $\gr$ and its vertex $v$ we denote by $\gr\setminus v$ the graph obtained from $\gr$ by
removing the vertex $v$ together with all edges adjacent to $v$.

Let $\Gamma$ be an infinite, locally finite graph. A \textit{ray} in $\Gamma$ is
an infinite sequence of adjacent vertices $v_1, v_2, \ldots $ in $\Gamma$ such that
$v_i\neq v_j$ for $i\neq j$. Two rays $r$ and $r'$ are equivalent if for every finite
subset $F\subset\Gamma$ infinitely many vertices of $r$ and $r'$ belong to the same
connected component of $\Gamma\setminus F$. An \textit{end} of $\Gamma$ is an
equivalence class of rays.

In what follows we use the notation:
\begin{itemize}
\item $\#Ends(\gr)$ is the number of ends of $\gr$;
\item $\cc(\gr)$ is the number of connected components in the graph $\gr$;
\item $\ic(\gr)$ is the number of infinite connected components in $\gr$.
\end{itemize}

We will show later that the number $\ic(\gr)$ can be computed for $\Gamma=T_w\setminus w$. The following proposition relates this value to the number of ends of $T_w$.

\begin{prop}\label{prop_ends_limit}
Every tile graph $T_w$ for $w\in\xo$ has finitely many ends, which is equal to
\[
\#Ends(T_w)=\lim_{n\rightarrow\infty} \ic(T_{\sigma^n(w)}\setminus \sigma^n(w)),
\]
where $\sigma$ is the shift map on the space $\xo$.
\end{prop}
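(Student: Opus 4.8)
The plan is to reduce the statement to the standard description of ends together with the self-similar (inflation) structure of the tile graph. Recall first that for any connected, locally finite, infinite graph $\Gamma$ one has $\#Ends(\Gamma)=\sup_{F}\ic(\Gamma\setminus F)$, the supremum being over finite vertex sets $F$; moreover this quantity is unchanged if one adds or deletes finitely many vertices or edges, and $\ic(\Gamma\setminus F)$ is nondecreasing along any increasing exhaustion $F_1\subseteq F_2\subseteq\cdots$ of $V(\Gamma)$, since for $F\subseteq F'$ every infinite component of $\Gamma\setminus F$ contains at least one infinite component of $\Gamma\setminus F'$ and no two of the latter merge. I would use this throughout. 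Next I would record the infinite analogue of the inflation Theorem~\ref{th_tile_graph_construction}: grouping the vertices of $T_w$ (which by Lemma~\ref{cofinality} form the cofinality class $Cof(w)$) according to their length-$n$ prefix, the induced subgraph on the vertices with prefix $u\in X^n$ is isomorphic to $T_{\sigma^n(w)}$, and $T_w$ is obtained from these $|X|^n$ copies by adding the finitely many inflation edges of $E$, all incident to post-critical vertices. In particular the copy $C_n$ indexed by $u=w_n$ is isomorphic to $T_{\sigma^n(w)}$ via an isomorphism sending $w$ to $\sigma^n(w)$, the $C_n$ are nested with $\bigcap_n C_n=\{w\}$, and $\ic(C_n\setminus w)=\ic(T_{\sigma^n(w)}\setminus\sigma^n(w))$.

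For finiteness of $\#Ends(T_w)$ I would use the decomposition: any finite $F$ lies in finitely many level-$n$ copies, and distinct infinite components of $T_w\setminus F$ can be separated from one another only along the inflation edges, which are attached at the finitely many post-critical vertices; hence $\ic(T_w\setminus F)$ is bounded by a constant depending only on $|\pcset|$ and $S$, so $\#Ends(T_w)=\sup_F\ic(T_w\setminus F)<\infty$. For the existence of the limit I would prove that $d_n:=\ic(T_{\sigma^n(w)}\setminus\sigma^n(w))$ is nondecreasing. Applying the inflation decomposition one step to $T_{\sigma^n(w)}$, the graph $T_{\sigma^n(w)}\setminus\sigma^n(w)$ consists of the central copy $T_{\sigma^{n+1}(w)}\setminus\sigma^{n+1}(w)$, carrying $d_{n+1}$ infinite components, together with $|X|-1$ further copies of $T_{\sigma^{n+1}(w)}$, glued to it by inflation edges at post-critical vertices. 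Since each outer copy remains attached to the central part (its attaching vertices are post-critical, hence survive the deletion of the single basepoint), the gluing can only merge central infinite components or leave them unchanged, never create a genuinely new infinite piece; consequently $d_n\le d_{n+1}$, and being bounded the sequence $d_n$ converges to $\sup_n d_n$.

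It then remains to identify $\sup_n d_n$ with $\#Ends(T_w)$. Here I would compare the internal infinite components of $C_n\setminus w$ with the genuine ends of $T_w$ through the finite separating sets furnished by the post-critical vertices: deleting from $T_w$ the at most $|\pcset|$ post-critical attaching vertices of $C_n$ severs all inflation edges joining $C_n$ to $T_w\setminus C_n$, so the infinite components of $T_w$ minus this finite set that lie inside $C_n$ are exactly the infinite components of $C_n$ minus those vertices; letting $n\to\infty$, using $\bigcap_n C_n=\{w\}$ and the finite-modification invariance of ends, these should account for every end of $T_w$ and for no end twice. The main obstacle is precisely the bookkeeping in this last step: one must rule out that two distinct internal infinite components of $C_n\setminus w$ are joined to a single end of $T_w$ through the outer copies, and, conversely, that one internal component carries several ends — that is, that the merging of components across copies is consistent in the limit. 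I expect to resolve this using the boundedness hypothesis, since the inflation edge set $E$ is finite and, by Proposition~\ref{prop_properties_of_sequences}, the post-critical sequences are finite in number and pre-periodic, so the gluing pattern along $w$ is governed by a finite amount of data and the component count stabilizes; this is exactly the finite-state phenomenon that the automaton $\A_{ic}$ constructed in the sequel will make explicit.
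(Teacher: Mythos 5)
Your first paragraph (the description of ends as $\sup_F \ic(\Gamma\setminus F)$ over finite vertex sets, and the monotonicity of $\ic$ along increasing exhaustions) is correct, and it is essentially how the proof can be concluded. But the structural input you feed into it --- your ``infinite analogue of the inflation theorem'' --- is false, and this breaks everything downstream. You claim that the subgraph of $T_w$ induced on the vertices with a fixed length-$n$ prefix $u$ is isomorphic to $T_{\sigma^n(w)}$, and that $T_w$ is obtained from these $|X|^n$ copies by adding finitely many edges incident to post-critical vertices. Test this on the adding machine $a=(1,a)(0,1)$ with $S=\{1,a,a^{-1}\}$ (which satisfies all the standing assumptions) and $w=0^{\omega}$: identifying $Cof(0^{\omega})$ with $\mathbb{N}$ written in binary, least significant digit first, the graph $T_w$ is the ray with edges $m\sim m+1$. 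The vertices with prefix $0$ are the even integers, and the subgraph they induce has \emph{no edges at all}: an edge of $T_w$ joining $0v$ to $0v'$ would require some $s\in S$ with $s(0v)=0v'$, in particular $s(0)=0$, but $a(0)=a^{-1}(0)=1$. Meanwhile $T_{\sigma(w)}$ is again a ray, and infinitely many edges of $T_w$ run between the two prefix classes, not finitely many. So both halves of your decomposition fail; with them fail the asserted equality $\ic(C_n\setminus w)=\ic(T_{\sigma^n(w)}\setminus\sigma^n(w))$ (in the example the left side is $0$, the right side is $1$), the monotonicity argument, and the final separation step, since deleting at most $|\pcset|$ vertices does not sever $C_n$ from its complement.

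The correct decomposition goes the other way around. Partition the vertex set $Cof(w)$ of $T_w$ by the infinite \emph{suffix}, into the finite blocks $X^n w'$ for $w'\in Cof(\sigma^n(w))$. Each block spans a connected subgraph of $T_w$ (it contains a copy of the connected graph $T_n$), and the quotient graph obtained by contracting each block to a point is isomorphic to $T_{\sigma^n(w)}$ --- this is the step where self-similarity of $S$ (restrictions of states are states) and the incoming-arrow assumption are both used. Thus $T_w$ fibers over $T_{\sigma^n(w)}$ with finite connected fibers; it is not a finite union of copies of $T_{\sigma^n(w)}$ glued along finitely many edges. From this one gets $\ic\bigl(T_w\setminus X^n\sigma^n(w)\bigr)=\ic\bigl(T_{\sigma^n(w)}\setminus\sigma^n(w)\bigr)$, and since the sets $X^n\sigma^n(w)$ form an increasing exhaustion of $Cof(w)$ by finite sets (every vertex cofinal with $w$ lies in $X^n\sigma^n(w)$ for all large $n$), your paragraph-one facts give $\lim_{n}\ic\bigl(T_{\sigma^n(w)}\setminus\sigma^n(w)\bigr)=\sup_F\ic(T_w\setminus F)=\#Ends(T_w)$. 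Finiteness is then immediate, because $T_{\sigma^n(w)}$ is connected, so $\ic\bigl(T_{\sigma^n(w)}\setminus\sigma^n(w)\bigr)$ is bounded by the degree of $\sigma^n(w)$, hence by $|S|$. This is exactly the paper's route; your proposal cannot be repaired without replacing its central claim by this one.
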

\begin{proof}
Let us show that the number of infinite connected components of the graphs $T_{\sigma^n(w)}\setminus
\sigma^n(w)$ and $T_w\setminus X^n\sigma^n(w)$ is the same for every $n$. Consider the natural partition of
the set of vertices of $T_w$ given by
\[
Cof(w)=\bigsqcup_{w'\in Cof(\sigma^n(w))} X^nw'.
\]
Using the graph $T_w$ we construct a new graph $\mathscr{G}$ with the set of vertices $Cof(\sigma^n(w))$,
where two vertices $v$ and $u$ are adjacent if there exist $v',u'\in X^n$ such that $v'v$ and $u'u$ are
adjacent in $T_w$. The graph $\mathscr{G}$ is isomorphic to the tile graph $T_{\sigma^n(w)}$ under the
identity map on $Cof(\sigma^n(w))$. Indeed, let $v$ and $u$ be adjacent in $\mathscr{G}$. Then there exist
$v',u'\in X^n$ and $s\in S$ such that $s(v'v)=u'u$ and $s|_{v'v''}=1$ for a finite beginning $v''$ of $v$.
It follows that $s|_{v'}(v)=u$, $s|_{v'}|_{v''}=1$, and $s|_{v'}\in S$, because of self-similarity.
Therefore $v$ and $u$ are adjacent in $T_{\sigma^n(w)}$. Conversely, suppose $s(v)=u$ and $s|_{v''}=1$ for
some $s\in S$ and a finite beginning $v''$ of $v$. Since each element of $S$ has an incoming edge, there
exist $s'\in S$ and $v',u'\in X^n$ such that $s'(v'v)=u'u$ and $s'|_{v'v''}=1$. Hence $v$ and $u$ are
adjacent in the graph~$\mathscr{G}$.

The subgraph of $T_{w}$ spanned by every set of vertices $X^nw'$ for $w'\in Cof(\sigma^n(w))$ is connected,
because, by assumption, the tile graphs $T_n$ are connected. Hence, the number of infinite connected components in
$T_w\setminus X^n\sigma^n(w)$ is equal to the number of infinite connected components in
$T_{\sigma^n(w)}\setminus \sigma^n(w)$. In particular, this number is bounded by the size of the generating
set $S$.

Every infinite component of $T_w\setminus X^n \sigma^n(w)$ contains at least one end. Hence the estimate
\[
\#Ends(T_w)\geq \ic(T_w\setminus X^n \sigma^n(w))=\ic(T_{\sigma^n(w)}\setminus \sigma^n(w))
\]
holds for all $n$. In particular
\[
\#Ends(T_w)\geq \lim_{n\rightarrow\infty} \ic(T_{\sigma^n(w)}\setminus \sigma^n(w))
\]

For the converse consider the ends $\gamma_1,\ldots,\gamma_k$ of the graph $T_{w}$. They
can be made disconnected by removing finitely many vertices. Take $n$ large enough so that the set
$X^n\sigma^n(w)$ disconnects the ends $\gamma_i$. Since every end belongs to an infinite component, we get
at least $k$ infinite components of $T_{\sigma^n(w)}\setminus \sigma^n(w)$. In particular, the number of
ends is finite and the statement follows.
\end{proof}

In particular, the number of ends of every tile graph $T_w$ is not greater than the maximal degree of
vertices, i.e., $\# Ends(T_w)\leq |S|$. Now let us show how to compute the number $\ic(T_w\setminus w)$ in terms of the components that contain post-critical vertices. In other terms, only components of $T_n\setminus w_n$ with post-critical vertices give a positive contribution, in the limit, to the number of infinite components. In order to do that, we denote by $\pc(T_n\setminus w_n)$ the number of connected components of $T_n\setminus w_n$ that contain a
post-critical vertex. 

\begin{prop}\label{prop_infcomp_limit}
Let $w=x_1x_2\ldots\in\xo$ be a regular or a critical sequence, then $\pc(T_n\setminus w_n)$ is an eventually non-increasing sequence and
\begin{eqnarray*}
\ic(T_w\setminus w)=\lim_{n\rightarrow\infty} \pc(T_n\setminus w_n).
\end{eqnarray*}
\end{prop}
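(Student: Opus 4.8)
The plan is to realize $T_w\setminus w$ as a nested exhaustion by the finite graphs $G_n:=T_n\setminus w_n$ and to match its infinite components with the post-critical components of the pieces. Recall from Section~\ref{section schreier vs tile} that under the embedding $v\mapsto v\sigma^n(w)$ the tile graph $T_n$ sits inside $T_w$, that $w_n\mapsto w$, and that these inclusions are nested with $T_w=\bigcup_n T_n$. Hence $G_n\subseteq G_{n+1}$ and $T_w\setminus w=\bigcup_n G_n$. I would first record two facts coming from the inflation rule (Theorem~\ref{th_tile_graph_construction}), whose point is that the only edges joining distinct copies of $T_n$ inside $T_{n+1}$ --- and, iterating, the only edges leaving a single copy inside $T_w$ --- are attached to post-critical vertices. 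First, (A) a connected component of $G_n$ containing no post-critical vertex receives no new incident edge at any later inflation step (every post-critical vertex of $T_{n+1}$ lies, inside its copy, at a post-critical vertex of $T_n$, so such a component never acquires one), hence it is already a \emph{finite} connected component of $T_w\setminus w$. Second, (B) a component of $G_n$ containing a post-critical vertex acquires at the next inflation step an edge to a fresh full copy of $T_n$, which in turn carries post-critical vertices joining still further fresh copies; iterating adds infinitely many vertices, so such a component lies in an \emph{infinite} component of $T_w\setminus w$.

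From (A) and (B) the components of $G_n$ that meet an infinite component of $T_w\setminus w$ are exactly those carrying a post-critical vertex. Writing $I_1,\dots,I_k$ for the infinite components of $T_w\setminus w$ --- there are finitely many, at most $\deg(w)\le|S|$ of them --- this yields the bookkeeping identity
\begin{equation*}
\pc(T_n\setminus w_n)=\sum_{j=1}^{k}\cc(I_j\cap G_n),
\end{equation*}
since the post-critical components of $G_n$ are precisely the components of $\bigsqcup_j (I_j\cap G_n)$. The whole statement therefore reduces to showing, for each fixed $j$, that $\cc(I_j\cap G_n)$ is eventually non-increasing and tends to $1$; summing over the $k$ fixed indices then gives that $\pc(T_n\setminus w_n)$ is eventually non-increasing with limit $k=\ic(T_w\setminus w)$.

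For a fixed infinite component $I=I_j$ two observations finish the argument. Passing from $G_n$ to $G_{n+1}$ can never \emph{split} a component of $I\cap G_n$, because $I\cap G_n\subseteq I\cap G_{n+1}$ as subgraphs. Conversely, for $n$ large it creates no \emph{new} component: each freshly added full copy of $T_n$ is connected and is joined, through inter-copy edges that survive the deletion of $w_n$, to the central copy, so it is absorbed into a component already meeting $G_n$ rather than forming a new one. Consequently $\cc(I\cap G_n)$ is eventually non-increasing. Being a non-increasing sequence of positive integers it stabilizes to some $L\ge1$; if $L\ge2$ then two of the persisting components of $I\cap G_n$ never merge, contradicting the fact that, lying in the single connected graph $I$, they are joined by a finite path, which is contained in some $G_m$ and forces a merge at stage $m$. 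Hence $L=1$.

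The step that must be argued with care --- and where the hypothesis that $w$ be regular or critical enters --- is the claim that no new post-critical component is created at large $n$, equivalently that the inter-copy edges used above are not destroyed by deleting $w_n$. This is where I expect the real work. For regular $w$ one shows that $w_n$ is eventually \emph{not} a post-critical vertex of $T_n$, so that all inter-copy edges of the inflation survive in $G_{n+1}$ and every full copy stays attached to the existing structure: indeed $w_n=p_n$ for some $p\in\pcset$ and infinitely many $n$ would force, via the pigeonhole principle and the pre-periodicity of post-critical sequences (Proposition~\ref{prop_properties_of_sequences}), the sequence $w$ to be cofinal with a critical sequence, contradicting regularity. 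The critical case must be handled separately, since there $w_n$ is a post-critical vertex; writing $w=x^{\omega}$ in basic form one has $\sigma^n(w)=w$, so the inflation step is stationary and I would analyse the induced self-map on the set of post-critical components directly, again obtaining eventual monotonicity and the limit $\ic(T_w\setminus w)$.
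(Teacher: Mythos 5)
Your set-up (the exhaustion $T_w\setminus w=\bigcup_n G_n$), your Fact (A), the bound $\ic(T_w\setminus w)\le \deg(w)\le |S|$, and the merging argument showing that two components of $I\cap G_n$ must eventually coalesce are all correct; (A) is essentially the paper's first step, and your observation that $w_n$ is eventually not post-critical when $w$ is regular is also correct. The genuine gap is Fact (B), which is exactly the hard half of the proposition (it is what makes $\lim_n \pc(T_n\setminus w_n)\le \ic(T_w\setminus w)$ true, and what your bookkeeping identity rests on). Your proof of (B) hinges on the claim that a post-critical vertex ``acquires at the next inflation step an edge to a fresh full copy of $T_n$''. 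This is false: an edge of the model graph at $(p,x)$ exists only if some left-infinite path in $S\setminus\{1\}$ reading $p$ arrives at a state $s$ with $s|_x=1$, and many pairs $(p,x)$ carry no such edge. In the paper's own examples this is visible: for the Gupta--Fabrykowski group every edge of the model graph joins two vertices of the form $(2^{-\omega}0,x)$, so the three vertices $(2^{-\omega},x)$, $x\in X$, carry no edge at all, and the post-critical vertex $2^n$ of $T_n$ acquires no new edge at the next step --- and, if the next letter of $w$ is $1$, at no later step either. Worse, your argument for (B) never uses the hypothesis that $w$ is regular or critical, yet (B) is genuinely false without it: if $w$ is cofinal to a critical sequence $u$ with $w\neq u$, then the prefix $u_n$ is a post-critical vertex of $T_n$ for every $n$, while its component in $G_n$ can be a \emph{finite} component of $T_w\setminus w$; this is precisely the phenomenon treated in the Remark following Proposition~\ref{prop_infcomp_limit}, and it is why the limit of $\pc(T_n\setminus w_n)$ can overshoot $\ic(T_w\setminus w)$ for such $w$. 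An argument establishing (B) uniformly in $w$ therefore cannot be correct.

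The hypothesis has to enter exactly where the paper puts it, in the converse direction: one shows that for all large $n$ no finite component of $T_w\setminus w$ contains a post-critical vertex of $T_n$. For regular $w$ this holds because every vertex $v$ of a finite component is itself a regular sequence (by Proposition~\ref{prop_properties_of_sequences} the cofinality class of a regular sequence contains only regular sequences), hence $s|_{v_n}=1$ for every $s\in S$ and all large $n$, i.e.\ $v_n$ is eventually not post-critical; for critical $w$ one uses instead that $T_w\setminus w$ contains no critical sequence, while a vertex whose prefixes were post-critical for infinitely many $n$ would be forced (by pre-periodicity of $\pcset$ and the cycle structure of bounded automata) to be critical. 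You have located the role of the hypothesis only in the survival of the inter-copy edges, i.e.\ in the monotonicity/``no new components'' step; that is a correct but secondary point. Finally, your critical case is a declaration of intent rather than a proof, and it is not a routine repetition of your regular case: there $w_n$ \emph{is} post-critical for every $n$, so your ``all inter-copy edges survive the deletion of $w_{n+1}$'' argument fails as well, and the pieces severed at $w_{n+1}$ must actually be analysed.
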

\begin{proof}
Choose $n$ large enough so that the subgraph $T_n$ of $T_w$ contains all edges of $T_w$ adjacent to the
vertex $w$. Notice that if a vertex $v$ of $T_n$ is adjacent to some vertex $s(v)$ in $T_w\setminus T_n$,
then $s|_v\neq 1$ and thus $v$ is post-critical. It follows that if $C$ is a connected component of
$T_n\setminus w_n$ without post-critical vertices, then all the edges of the graph $T_w\setminus w$
adjacent to the component $C$ are contained in the graph $T_n\setminus w_n$. Hence $C$ is a finite
component of $T_w\setminus w$. Therefore the number of infinite components of $T_w\setminus w$ is not
greater than the number of components of $T_n\setminus w_n$ that contain a post-critical vertex. It
follows $\ic(T_w\setminus w)\leq \pc(T_{n+k}\setminus w_{n+k})\leq  \pc(T_n\setminus w_n)$ for all $k\geq
1$. In fact, if $v, v'$ are post-critical and belong to the same connected component of $T_n\setminus
w_n$, then there exists a path $\{s_1,\ldots, s_m\}$ connecting them such that $s_i|_{s_1\cdots
s_{i-1}(v_n)}=1$, in particular the same holds for $s_i|_{s_1\cdots s_{i-1}(v_{n+k})}=1$, $k\geq 1$. This
implies the monotonicity of $\pc(T_{n+k}\setminus w_{n+k})$ for $k\geq 1$.

Suppose now that $w$ is regular. Let $C$ be a finite component of $T_w\setminus w$. Since $C$ is finite,
every edge inside $C$ appears in the graph $T_n$ for all large enough $n$, and thus $C$ is a connected
component of $T_n\setminus w_n$. Since $C$ contains only regular sequences, the last statement in
Proposition~\ref{prop_properties_of_sequences} implies that for all $s\in S$ and every vertex $v$ in $C$
we have $s|_{v_n}=1$ for all large enough $n$. In other words, the vertex $v_n$ of $T_n$ is not
post-critical for every vertex $v$ in $C$. Therefore the component $C$ is not counted in the number
$\pc(T_n\setminus w_n)$. Hence $\ic(T_w\setminus w)=\pc(T_n\setminus w_n)$ for all large enough~$n$.

The same arguments work if $w$ is critical, because every cofinality class contains not more than one
critical sequence, and hence the graph $T_w\setminus w$ has no critical sequences.
\end{proof}

\begin{rem}
With a slight modification the last proposition also works for a sequence $w$, which is not critical but is
cofinal to some critical sequence $u$. In this case, we can count the number of connected components of
$T_n\setminus w_n$ that contain post-critical vertices other than $u_n$, and then pass to the limit to get
the number of infinite components in $T_w\setminus w$. Indeed, it is enough to notice that if the graph
$T_n\setminus w_n$ contains a connected component $C$ with precisely one post-critical vertex $u_n$ for
large enough $n$, then $C$ is a finite component in the graph $T_w\setminus w$. Under this modification the
proposition may be applied to any sequence.

Also to find the number of ends it is enough to know that the limit in Proposition~\ref{prop_infcomp_limit}
is valid for regular and critical sequences. For any sequence $w$ cofinal to a critical sequence $u$ we
just consider the graph $T_w=T_u$ centered at the vertex $u$ and apply the proposition.
\end{rem}

\begin{rem}
It is not difficult to observe that one can use the same method to obtain the number $\cc(T_w\setminus w)$
of all connected components of $T_w\setminus w$. In particular, one has
$$
\cc(T_w\setminus w)=\lim_{n\rightarrow\infty} \cc(T_n\setminus w_n).
$$
\end{rem}

\subsection{Finite automaton to determine the number of components in tile graphs with a
vertex removed}\label{finite automaton section}

Using the iterative construction of tile graphs given in Theorem~\ref{th_tile_graph_construction} we can
provide a recursive procedure to compute the numbers $\pc(T_n\setminus w_n)$. We will construct a finite
deterministic (acceptor) automaton $\A_{ic}$ with the following structure: it has a unique initial state,
each arrow in $\A_{ic}$ is labeled by a letter $x\in X$, each state of $\A_{ic}$ is labeled by a partition
of a subset of the post-critical set $\pcset$. The automaton $\A_{ic}$ will have the property that, given
a word $v\in X^n$, the final state of $\A_{ic}$ after reading $v$ corresponds to the partition of the
post-critical vertices of the graph $T_n$ induced by the connected components of $T_n\setminus v$. Then
$\pc(T_n\setminus v)$ is just the number of parts in this partition.


We start with the following crucial consideration for the construction of the automaton $\A_{ic}$. Let $v$
be a vertex of the tile graph $T_n$. The components of $T_n\setminus v$ partition the set of post-critical
vertices of $T_n$. Let us consider only those components that contain at least one post-critical vertex.
Let $\pcset_i\subset \pcset$ be the set of all post-critical sequences, which represent post-critical
vertices in $i$-th component. If the vertex $v$ is not post-critical, then $\sqcup_{i} \pcset_i=\pcset$.
Otherwise, $\sqcup_{i} \pcset_i$ is a proper subset of $\pcset$; every sequence $p$ in $\pcset\setminus
\sqcup_{i} \pcset_i$ represents $v$, i.e., $v=p_n$ (for all large enough $n$ the set $\pcset\setminus
\sqcup_{i} \pcset_i$ consists of just one post-critical sequence, while for small values of $n$ the same
vertex may be represented by several post-critical sequences). In any case, we say that $\{ \pcset_i \}_i$
is the \textit{partition} (of a subset of $\pcset$) \textit{induced by the vertex} $v$. If $T_n\setminus
v$ does not contain post-critical vertices (this happens when $v=p_n$ for every $p\in\pcset$), then we say
that $v$ induce the empty partition $\{\emptyset\}$.

The set of all partitions induced by the vertices of tile graphs is denoted by $\Pi$. The set $\Pi$ can be
computed algorithmically. To see this, let us show how, given the partition $P=\{ \pcset_i \}_i$ induced
by a vertex $v$ and a letter $x\in X$, one can find the partition $F=\{\mathscr{F}_j\}_j$ induced by the
vertex $vx$. We will use the model graph $M$ associated to the automaton $S$ in Section~\ref{section_inflation}, which has the vertex set $\pcset\times X$ and edges $E$. Recall that the set $\pcset$ is identified with the set of post-critical vertices of $M$. Let us construct the auxiliary graph $M_{P,x}$ as follows: take the model graph $M$, add
an edge between $(p,x)$ and $(q,x)$ for $p,q\in\pcset_i$ and every $i$, and add an edge between $(p,y)$
and $(q,y)$ for every $p,q\in\pcset$ and $y\in X, y\neq x$. Put $K=\{ (p,x) :
p\in\pcset\setminus\sqcup_{i} \pcset_i \}$. If the graph $M_{P,x}\setminus K$ contains no post-critical
vertices, then we define $\{\mathscr{F}_j\}_j$ as the empty partition $\{\emptyset\}$. Otherwise, we
consider the components of $M_{P,x}\setminus K$ with at least one post-critical vertex, and let
$\mathscr{F}_j\subset\pcset$ be the set of all post-critical vertices/sequences in $j$-th component.

\begin{lemma}\label{lemma construction Aic}
$\{\mathscr{F}_j\}_j$ is exactly the partition induced by the vertex $vx$.
\end{lemma}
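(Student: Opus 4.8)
The plan is to read off the components of $T_{n+1}\setminus vx$ directly from the inflation structure of Theorem~\ref{th_tile_graph_construction} and to recognize the resulting connectivity on post-critical vertices as exactly that of $M_{P,x}\setminus K$. Recall that $T_{n+1}$ is obtained from $|X|$ copies of $T_n$, the $z$-copy having vertex set $X^nz$, by adjoining the glue edges of $E$; inside the $z$-copy the post-critical vertices $\{p_n:p\in\pcset\}$ of $T_n$ occupy the positions $\pcset\times\{z\}$ of the model graph $M$, so that $p_nz$ is identified with $(p,z)$, and a glue edge joins $(p,z)$ to $(q,z')$ precisely when $\{(p,z),(q,z')\}\in E$. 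Among these placed positions, the genuine post-critical vertices of $T_{n+1}$ are the $(p,z)$ with $pz\in\pcset$, i.e.\ the post-critical vertices of $M$, and under $(p,z)\mapsto pz$ they are in bijection with $\pcset$.

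First I would analyze the effect of deleting $vx$. The vertex $vx$ lies in the $x$-copy, so only that copy is affected internally: using only edges internal to the $x$-copy, two positions $(p,x),(q,x)$ are joined after deleting $vx$ if and only if $p,q$ lie in a common part $\pcset_i$, this being exactly the definition of the partition $P=\{\pcset_i\}_i$ induced by $v$ in $T_n\setminus v$; the sequences in $\pcset\setminus\sqcup_i\pcset_i$ are precisely those representing $v$, so the positions $(p,x)$ for such $p$ are occupied by the deleted vertex $vx$ and must be removed, which is exactly the set $K$. Every copy $z\neq x$ is an intact copy of $T_n$, which is connected by assumption, so all of its placed positions $\pcset\times\{z\}$ lie in one component. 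These three families of intra-copy connections are precisely the edges adjoined to $M$ in forming $M_{P,x}$, while the remaining edges of $M_{P,x}$ are the glue edges $E$ themselves.

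The heart of the argument is the claim that two post-critical vertices $a,b$ of $T_{n+1}$ lie in the same component of $T_{n+1}\setminus vx$ if and only if the corresponding post-critical vertices of $M$ lie in the same component of $M_{P,x}\setminus K$. For the ``only if'' direction I would take a path in $T_{n+1}\setminus vx$ joining $a$ and $b$ and cut it at every glue edge coming from $E$; since all edges of $T_{n+1}$ are either internal to a single copy or glue edges, each remaining segment uses only edges internal to one copy of $T_n$ and so joins two placed positions lying in a common component of that (possibly punctured) copy --- a common part $\pcset_i$ in the $x$-copy, or any two positions in a copy $z\neq x$. By the previous paragraph such a pair is adjacent in $M_{P,x}\setminus K$, and each cut glue edge is an edge of $M$ surviving the deletion of $K$, so concatenating produces a walk in $M_{P,x}\setminus K$ from $a$ to $b$. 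For the converse I would lift each edge of a walk in $M_{P,x}\setminus K$: an $E$-edge is a genuine edge of $T_{n+1}$, an adjoined $x$-slice edge (between $p,q\in\pcset_i$) lifts to a path inside the common component of $T_n\setminus v$ placed in the $x$-copy, and an adjoined $z$-slice edge lifts to a path inside the connected copy $T_n$; all these paths avoid $vx$, so the walk lifts to one in $T_{n+1}\setminus vx$.

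With the claim in hand, the components of $M_{P,x}\setminus K$ carrying a post-critical vertex of $M$ are in bijection with the components of $T_{n+1}\setminus vx$ carrying a post-critical vertex of $T_{n+1}$, and under $(p,z)\mapsto pz$ the induced groupings of post-critical sequences coincide, so $\{\mathscr{F}_j\}_j$ is the partition induced by $vx$; the degenerate case in which $T_{n+1}\setminus vx$ contains no post-critical vertex corresponds exactly to $M_{P,x}\setminus K$ having none, yielding the empty partition $\{\emptyset\}$ on both sides. The main obstacle is the bookkeeping inside the claim: one must keep a careful distinction between all placed positions $\pcset\times X$ and the genuine post-critical vertices $\{(p,z):pz\in\pcset\}$, and verify that every transition between copies in $T_{n+1}\setminus vx$ is forced through a glue edge between placed positions, so that cutting and lifting of paths is faithful in both directions.
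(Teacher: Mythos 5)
Your proposal is correct and follows essentially the same route as the paper: the paper's proof introduces the placement map $\varphi:M_{P,x}\rightarrow T_{n+1}$, $\varphi((p,y))=p_ny$ (your ``placed positions''), notes that $K=\varphi^{-1}(vx)$, and proves the two directions exactly as you do --- lifting the adjoined intra-copy edges and the $E$-edges of $M_{P,x}\setminus K$ to paths in $T_{n+1}\setminus vx$, and conversely subdividing a path in $T_{n+1}\setminus vx$ at the glue edges $\varphi(E)$ so that each remaining segment stays in one (possibly punctured) copy of $T_n$. The only stylistic difference is that you isolate the identification of the adjoined edges of $M_{P,x}$ with the intra-copy connectivity as a preliminary step, while the paper interleaves it with the two directions; the content is the same.
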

\begin{proof}
Let us consider the map $\varphi:M_{P,x}\rightarrow T_{n+1}$ given by $\varphi((p,y))=p_ny$, $p\in\pcset$ and $y\in X$. This map is neither surjective, nor injective in general, nor a graph homomorphism. However, it preserves the inflation construction of the graph $T_{n+1}$ from the graph $T_n$. Namely, $\varphi$ maps each subset $\pcset\times\{y\}$ into the subset $X^ny$, and the edges in $E$ onto the edges of $T_{n+1}$ obtained under construction (see Theorem~\ref{th_tile_graph_construction}). Also, by definition of post-critical vertices, the map $\varphi$ sends the post-critical vertices of $M_{P,x}$ onto the post-critical vertices of $T_{n+1}$.

Note that the set $K$ is exactly the preimage of the vertex $vx$ under $\varphi$. Therefore we can consider the restriction $\varphi:M_{P,x}\setminus K\rightarrow T_{n+1}\setminus vx$.

For every $y\in X$, $y\neq x$ all vertices in $\pcset\times\{y\}$ belong to the same component of $M_{P,x}\setminus K$, and $\varphi$ maps these vertices to the same component of $T_{n+1}\setminus vx$, because the subgraph of $T_{n+1}$ induced by the set of vertices $X^ny$ contains the graph $T_n$, which is connected. For each $i$ the vertices in $\pcset_i\times\{x\}$ are mapped to the same component of the graph $T_{n+1}\setminus vx$, because its subgraph induced by the vertices $X^nx\setminus vx$ contains the graph $T_{n}\setminus v$ and $\pcset_i$ is its connected component.
It follows that, if two post-critical vertices $p$ and $q$ can be connected by a path $p=v_0,v_1,\ldots,v_m=q$ in $M_{P,x}\setminus K$, then for every $i$ the vertices $\varphi(v_i)$ and $\varphi(v_{i+1})$ belong to the same component of $T_{n+1}\setminus vx$, and therefore the post-critical vertices $\varphi(p)$ and $\varphi(q)$ lie in the same component.  Conversely, suppose $\varphi(p)$ and $\varphi(q)$ can be connected by a path $\gamma$ in $T_{n+1}\setminus vx$. We can subdivide $\gamma$ as $\gamma_1e_1\gamma_2e_2\ldots e_m\gamma_{m+1}$, where $e_i\in \varphi(E)$ and each subpath $\gamma_i$ is a path in a copy of $T_n$ inside $T_{n+1}$. The preimages of the end points of each $e_i$ belong to the same component in $M_{P,x}\setminus K$. Therefore $p$ and $q$ lie in the same component in $M_{P,x}\setminus K$.
The statement follows.
\end{proof}


It follows that we can find the set $\Pi$ algorithmically as follows. Note that the empty partition
$\{\emptyset\}$ is always an element of $\Pi$ (it is induced by the unique vertex of the tile $T_0$ of
zero level). We start with $\{\emptyset\}$ and for each letter $x\in X$ construct new partition
$\{\mathscr{F}_j\}_j$ as above. We repeat this process for each new partition until no new partition is
obtained. Since the set $\pcset$ is finite, the process stops in finite time. Then $\Pi$ is exactly the
set of all obtained partitions.

We construct the (acceptor) automaton $\A_{ic}$ over the alphabet $X$ on the set of states $\Pi$ with the
the unique initial state $\{\emptyset\}\in\Pi$. The transition function is given by the rule: for
$\{\pcset_i\}_i\in\Pi$ and $x\in X$ we put $\{\pcset_i\}_i \xrightarrow{\ x\ } \{\mathscr{F}_j\}_j$, where
$\{\mathscr{F}_j\}_j$ is defined as above. The automaton $\A_{ic}$ has all the properties we described at
the beginning of this subsection: given a word $v\in X^{*}$, the final state of $\A_{ic}$ after accepting
$v$ is exactly the partition induced by the vertex $v$. Since we are interested in the number
$\pc(T_n\setminus v)$ of components in $T_n\setminus v$ containing post-critical vertices, we can label
every state of $\A_{ic}$ by the number of components in the corresponding partition. We get the following
statement.



\begin{theor}\label{thm_number_of_con_comp}
The graph $T_{|v|}\setminus v$ has $k$ components containing a post-critical vertex if and only if the
final state of the automaton $\A_{ic}$ after accepting the word $v$ is labeled by the number $k$. In
particular, for every $k$ the set $C(k)$ of all words $v\in X^{*}$ such that the graph $T_{|v|}\setminus
v$ has $k$ connected components containing a post-critical vertex is a regular language recognized by the
automaton $\A_{ic}$.
\end{theor}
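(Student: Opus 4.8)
The plan is to show, by induction on the length $n = |v|$, that after reading a word $v = x_1 x_2 \cdots x_n$ the deterministic automaton $\A_{ic}$ arrives at the state whose partition is precisely the partition of $\pcset$ induced by the vertex $v$ of the tile graph $T_n$. Granting this invariant, the theorem is immediate from the way the states of $\A_{ic}$ are labeled, so essentially all of the work has already been done in the construction preceding the statement.

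For the base case $n = 0$, reading the empty word leaves $\A_{ic}$ in its initial state $\{\emptyset\}$; the tile graph $T_0$ has a single vertex which represents every post-critical sequence, and so it induces the empty partition $\{\emptyset\}$, in agreement. For the inductive step, I would assume the invariant for a word $v$ of length $n$, so that $\A_{ic}$ sits in the state $P = \{\pcset_i\}_i$ induced by the vertex $v$ of $T_n$. Upon reading an additional letter $x \in X$, the transition rule of $\A_{ic}$ moves $P$ to the partition $\{\mathscr{F}_j\}_j$ produced from the auxiliary graph $M_{P,x}$. By Lemma~\ref{lemma construction Aic}, $\{\mathscr{F}_j\}_j$ is exactly the partition induced by the vertex $vx$ of $T_{n+1}$, which closes the induction.

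Once the invariant is in place, I would conclude as follows. By construction each state of $\A_{ic}$ is labeled by the number of parts of its partition, and the partition induced by $v$ has, by definition, exactly as many parts as there are connected components of $T_{|v|} \setminus v$ containing a post-critical vertex. Hence the label of the final state after reading $v$ equals $k$ if and only if $T_{|v|} \setminus v$ has $k$ such components, which is the first assertion. For the ``in particular'' claim, declaring every state labeled by $k$ to be accepting turns $\A_{ic}$ into a finite deterministic acceptor whose accepted language is exactly $C(k)$; being recognized by a finite automaton, $C(k)$ is regular.

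All of the combinatorial substance has already been isolated in Lemma~\ref{lemma construction Aic}, so the theorem is essentially a bookkeeping induction and I do not expect a genuine obstacle. The one point needing care is the small-$n$ regime: for small $n$ several distinct post-critical sequences may collapse to the same vertex $p_n$, so one must track partitions of subsets of $\pcset$ at the level of \emph{sequences} rather than of the literal vertex set. This is, however, already the convention under which the induced partition and Lemma~\ref{lemma construction Aic} are stated (for every $n$), so the induction goes through without modification.
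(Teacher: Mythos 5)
Your proposal is correct and follows essentially the same route as the paper: the paper treats Theorem~\ref{thm_number_of_con_comp} as an immediate consequence of the construction of $\A_{ic}$ together with Lemma~\ref{lemma construction Aic}, and your explicit induction on $|v|$ (base case the empty partition at $T_0$, inductive step supplied by the lemma) is precisely the argument the paper leaves implicit. Your closing remark about small $n$, where several post-critical sequences may represent the same vertex, is also handled exactly as in the paper, namely by tracking partitions of subsets of $\pcset$ at the level of sequences rather than vertices.
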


Similarly, we construct a finite deterministic acceptor automaton $\A_c$ for computing the number of all
components in tile graphs with a vertex removed. The states of the automaton $\A_c$ will be pairs of the form
$(\{\pcset_i\}_i,m)$, where $\{\pcset_i\}_i\in\Pi$ and $m$ is a non-negative integer number, which will
count the number of connected components without post-critical vertices. We start with the state
$(\{\emptyset\},0)$, which is the unique initial state of $\A_c$, and consequently construct new states
and arrows as follows. Let $(P=\{\pcset_i\}_i,m)$ be a state already constructed. For each $x\in X$ we
take the graph $M_{P,x}\setminus K$ constructed above, define $\{\mathscr{F}_j\}_j$ as above, and put
$m_x$ to be equal to the number of connected components in $M_{P,x}\setminus K$ without post-critical
vertices. If $(\{\mathscr{F}_j\}_j,m+m_x)$ is already a state of $\A_{c}$, then we put an arrow labeled by $x$ from the state $(\{\pcset_i\}_i,m)$ to the state $(\{\mathscr{F}_j\}_j,m+m_x)$.
Otherwise, we introduce $(\{\mathscr{F}_j\}_j,m+m_x)$ as a new state and put this arrow. We repeat this process for each new state
until no new state is obtained. Since the number of all components in $T_n\setminus v$ is not greater than
$|S|$, the number $m$ cannot exceed $|S|$, and the construction stops in finite time.

The automaton $\A_c$ has the following property: given a word $v\in X^{*}$, the final state of $\A_c$
after accepting the word $v$ is exactly the pair $(\{\pcset_i\}_i,m)$, where $\{\pcset_i\}_i$ is the
partition induced by $v$ and $m$ is the number of components in $T_{|v|}\setminus v$ without post-critical
vertices. Since we are interested only in the number $c(T_n\setminus v)$ of all components of
$T_n\setminus v$, we label every state $(\{\pcset_i\}_i,m)$ of the automaton $\A_{c}$ by the number $k+m$,
where $k$ is the number of sets in the partition $\{\pcset_i\}_i$. We get the following
statement.

\begin{prop}\label{prop_number_of_con_comp A_c}
The graph $T_{|v|}\setminus v$ has $k$ components if and only if the final state of the automaton $\A_{c}$
after accepting the word $v$ is labeled by the number $k$. In particular, for every $k$ the set $C(k)$ of
all words $v\in X^{*}$ such that the graph $T_{|v|}\setminus v$ has $k$ connected components is a regular
language recognized by the automaton $\A_{c}$.
\end{prop}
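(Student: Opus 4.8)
The proof runs in parallel with that of Theorem~\ref{thm_number_of_con_comp}, the only genuinely new ingredient being the correct bookkeeping of the integer $m$. The plan is to prove, by induction on $n=|v|$, that after reading a word $v\in X^n$ the automaton $\A_c$ sits in the state $(\{\pcset_i\}_i,m)$, where $\{\pcset_i\}_i$ is the partition of the post-critical vertices of $T_n$ induced by the components of $T_n\setminus v$, and $m$ is the number of connected components of $T_n\setminus v$ containing no post-critical vertex. Granting this invariant, the total number $\cc(T_n\setminus v)$ of components equals $k+m$, where $k=|\{\pcset_i\}_i|$, which is exactly the label attached to the final state; the assertion that $C(k)$ is regular then follows at once, since a language accepted by a finite deterministic automaton is regular.

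The base case $n=0$ is immediate: $T_0\setminus v$ is empty, and the initial state $(\{\emptyset\},0)$ records the empty partition together with the count $0$. For the inductive step, suppose the invariant holds after reading $v\in X^n$, so $\A_c$ is in the state $(P,m)$ with $P=\{\pcset_i\}_i$. Reading a letter $x$ moves $\A_c$ to $(\{\F_j\}_j,m+m_x)$, with $\{\F_j\}_j$ and $m_x$ read off from the auxiliary graph $M_{P,x}\setminus K$ as in the construction. The partition part $\{\F_j\}_j$ is correct by Lemma~\ref{lemma construction Aic}, so it remains only to identify $m+m_x$ with the number of post-critical-free components of $T_{n+1}\setminus vx$. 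Here I would lean on the inflation description of $T_{n+1}$ from Theorem~\ref{th_tile_graph_construction}: $T_{n+1}$ is assembled from the $|X|$ copies $X^ny$ of $T_n$, glued only along their post-critical vertices according to $E$, and the removed vertex $vx$ lies in the copy $X^nx$, where it plays the role of $v$ in $T_n$.

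The post-critical-free components of $T_{n+1}\setminus vx$ then split into two disjoint families. First, any component of $T_n\setminus v$ that already contains no post-critical vertex of $T_n$ cannot be an endpoint of any inter-copy edge (all such edges meet only post-critical vertices $p_ny$), so it survives unchanged as an isolated post-critical-free component of $T_{n+1}\setminus vx$; these are precisely the $m$ components carried over by induction, and they remain mutually distinct. Second, a component of $T_{n+1}\setminus vx$ that meets the image of some post-critical vertex $p_n$ of $T_n$ yet contains no post-critical vertex of $T_{n+1}$ is ``newly born'' at this level; via the map $\varphi\colon M_{P,x}\setminus K\to T_{n+1}\setminus vx$ of Lemma~\ref{lemma construction Aic}, such components correspond bijectively to the components of $M_{P,x}\setminus K$ carrying no post-critical vertex of $M$, of which there are $m_x$ by definition. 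Since each copy $X^ny$ with $y\neq x$ is a full connected copy of $T_n$ and hence already meets post-critical vertices, it contributes nothing outside these two families, and adding their disjoint contributions yields $m+m_x$.

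The main obstacle is exactly this last accounting, and its crux is that the argument of Lemma~\ref{lemma construction Aic} actually establishes a component-level correspondence for \emph{all} components of $M_{P,x}\setminus K$, not merely those carrying post-critical vertices of $M$: its forward direction shows that $\varphi$ sends each connected piece of $M_{P,x}\setminus K$ into a single component of $T_{n+1}\setminus vx$, while the subdivision $\gamma_1e_1\cdots e_m\gamma_{m+1}$ used in its converse direction pulls back any connecting path and therefore applies verbatim to post-critical-free model components. Once this extension is noted, the disjointness and exhaustiveness of the two families are routine, and one must only record the disjointness check that carried-over pieces never merge (they lack the post-critical vertices that are the sole gluing points). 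Finiteness of $\A_c$ is guaranteed by $|\Pi|<\infty$ together with $m\leq\cc(T_n\setminus v)\leq|S|$, which completes the induction and yields the regularity of $C(k)$.
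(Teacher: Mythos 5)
Your proof is correct and takes essentially the same approach as the paper: the paper's own justification of this proposition is nothing more than the construction of $\A_c$ together with the asserted invariant that the state reached after reading $v$ is $(\{\pcset_i\}_i,m)$, which is exactly the induction you carry out. Your only genuine addition is to make the paper's implicit bookkeeping explicit --- in particular the observation that the path-subdivision argument in Lemma~\ref{lemma construction Aic} applies to \emph{all} components of $M_{P,x}\setminus K$, so that post-critical-free model components correspond bijectively to the ``newly born'' post-critical-free components of $T_{n+1}\setminus vx$ --- and that accounting is accurate.
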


We need the following properties of vertex labels in the automata $\A_{ic}$ and $\A_{c}$.

\begin{lemma}\label{lemma_properties_A_ic}
\begin{enumerate}
  \item In every strongly connected component of the automata $\A_{ic}$ and $\A_c$ all states are labeled
  by the same number.

 \item All strongly connected components of the automaton $\A_{ic}$ without outgoing arrows are labeled by
 the same number.
\end{enumerate}
\end{lemma}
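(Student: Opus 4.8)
The plan is to read off, in both automata, the label of a state reached by a word $v\in X^{*}$ as the relevant count attached to $T_{|v|}\setminus v$: for $\A_{ic}$ the label is $\pc(T_{|v|}\setminus v)$ (Theorem~\ref{thm_number_of_con_comp}), and for $\A_c$ it is $\cc(T_{|v|}\setminus v)$ (Proposition~\ref{prop_number_of_con_comp A_c}). The whole argument rests on one elementary observation: an integer-valued sequence that converges is eventually constant, so if it attains two values infinitely often those values coincide. I will feed this observation with the convergence statements already available, namely $\ic(T_w\setminus w)=\lim_n \pc(T_n\setminus w_n)$ from Proposition~\ref{prop_infcomp_limit} (and its counterpart $\cc(T_w\setminus w)=\lim_n \cc(T_n\setminus w_n)$ from the following remark), together with $\#Ends(T_w)=\lim_n \ic(T_{\sigma^n(w)}\setminus\sigma^n(w))$ from Proposition~\ref{prop_ends_limit}.

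For part~1, fix a strongly connected component $\mathcal{D}$ and two of its states $P,Q$. Since $\mathcal{D}$ is strongly connected I can choose a word $r$ leading from the initial state to $P$ and a cyclic word $c$ whose reading starts at $P$, passes through $Q$, and returns to $P$. Reading $w=rc^{\omega}$, the state of $\A_{ic}$ after the prefix $w_n$ is $P$ for infinitely many $n$ (the returns to $P$) and $Q$ for infinitely many $n$ (the visits to $Q$); hence $\pc(T_n\setminus w_n)$ takes the values $\mathrm{label}(P)$ and $\mathrm{label}(Q)$ infinitely often. By Proposition~\ref{prop_infcomp_limit} this sequence converges to $\ic(T_w\setminus w)$, so it is eventually constant, forcing $\mathrm{label}(P)=\mathrm{label}(Q)$. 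The same sequence $w$ together with the convergence $\cc(T_n\setminus w_n)\to \cc(T_w\setminus w)$ settles the statement for $\A_c$.

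For part~2, let $\mathcal{C}_1,\mathcal{C}_2$ be two sink components with (well-defined, by part~1) labels $\ell_1,\ell_2$. Because each is a sink, once a reading enters $\mathcal{C}_i$ it never leaves, so the sink reached by a sequence depends only on whether it has a prefix reaching it; in particular a word reaching $\mathcal{C}_2$ never passes through $\mathcal{C}_1$. Choosing $a_1$ reaching $\mathcal{C}_1$ and $a_2$ reaching $\mathcal{C}_2$ and setting $w=a_1a_2a_1a_2\cdots$, every tail $\sigma^m(w)$ that begins at an $a_1$-block reaches $\mathcal{C}_1$, while every tail beginning at an $a_2$-block reaches $\mathcal{C}_2$. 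Thus $\ic(T_{\sigma^m(w)}\setminus\sigma^m(w))$ equals $\ell_1$ for infinitely many $m$ and $\ell_2$ for infinitely many $m$, since the limit defining this quantity (Proposition~\ref{prop_infcomp_limit}) stabilizes at the label of the sink eventually reached. On the other hand Proposition~\ref{prop_ends_limit} guarantees that $\lim_m \ic(T_{\sigma^m(w)}\setminus\sigma^m(w))=\#Ends(T_w)$ exists, and a convergent sequence cannot attain two distinct values infinitely often; hence $\ell_1=\ell_2$.

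The delicate point in both parts is that Propositions~\ref{prop_infcomp_limit} and~\ref{prop_ends_limit} identify the limit of the component counts with $\ic$ (equivalently with the sink label) only for regular or critical sequences, the exceptional case being sequences cofinal to, but distinct from, a critical one. I would remove this gap by arranging the constructed sequences to be regular: since there are only finitely many critical sequences and each is periodic, I can lengthen the cyclic word $c$ in part~1 and the blocks $a_1,a_2$ in part~2 (looping inside the respective components, which is possible precisely because they are strongly connected) so that no shift of the resulting sequence is critical. Checking that this can always be achieved, and that all the tails occurring at the chosen positions are themselves regular, is the main technical obstacle; everything else reduces to the ``convergent integer sequence'' principle above.
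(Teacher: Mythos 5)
Your overall construction---words whose readings in $\A_{ic}$ oscillate between two states, combined with the principle that a convergent integer-valued sequence is eventually constant---is exactly the paper's strategy, so the architecture is right. The genuine gap is the step you yourself flag and leave unresolved: you invoke Proposition~\ref{prop_infcomp_limit} only through the limit identity $\pc(T_n\setminus w_n)\to\ic(T_w\setminus w)$, which is available only for regular or critical $w$, and you propose to force regularity by lengthening the cycle $c$. This cannot always be done. A strongly connected component $\mathcal{D}$ of $\A_{ic}$ may be a bare directed cycle, i.e.\ each of its states has exactly one outgoing arrow remaining inside $\mathcal{D}$; then every cycle through $P$ and $Q$ reads one and the same word $c_0$ up to rotation and taking powers, so $c^{\omega}$ is forced to be a shift of $(c_0)^{\omega}$, with no room to perturb. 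Nothing in the construction of $\A_{ic}$ prevents $(c_0)^{\omega}$ from being critical (periodic critical sequences are the norm: $(01)^{\omega}$ is critical for the Basilica group). In that situation every admissible word $w=rc^{\omega}$ has a critical shift, hence is not regular; and $w$ is critical itself only if the reading of that critical sequence from the initial state happens to enter $\mathcal{D}$ with the correct alignment, which need not happen. So both your argument and your proposed repair can break down, while the labels still have to be proved equal.

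The paper closes exactly this hole by using the other half of Proposition~\ref{prop_infcomp_limit}: the counts $\pc(T_n\setminus w_n)$ (and likewise $\cc(T_n\setminus w_n)$) are \emph{eventually non-increasing}, and the proof of that monotonicity given in the paper never uses regularity or criticality of $w$---the hypothesis enters only when identifying the limit with $\ic(T_w\setminus w)$. An eventually monotone, bounded, integer-valued sequence is eventually constant, hence cannot take two distinct labels infinitely often; that settles part~1 for an arbitrary oscillating word, no matter how degenerate the component. That replacement of ``convergence to $\ic$'' by ``eventual monotonicity of the counts'' is the missing idea. For part~2 your argument coincides with the paper's (play $a_1a_2a_1a_2\cdots$ against the existence of the limit in Proposition~\ref{prop_ends_limit}), and there your repair genuinely works, because sink components absorb arbitrary continuations: one may append to $a_1$ a marker word $x^{P}y$ (with $x\neq y$ and $P$ exceeding the periods of the finitely many critical sequences), which keeps the reading inside $\mathcal{C}_1$ and forces the minimal period of the resulting periodic word to exceed $P$, so that no shift of it is critical. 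It is worth noting that the paper itself silently skips this verification in its proof of part~2, so on that half you are, if anything, more careful than the original; but part~1 as you propose it is incomplete.
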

\begin{proof}
\textit{1.} Suppose that there is a strongly connected component with two states labeled by different
numbers. It would imply that there exists an infinite word such that the corresponding path in the
automaton passes through each of these states infinite number of times. We get a contradiction with Proposition~\ref{prop_infcomp_limit}, because the sequences $\pc(T_n\setminus w_n)$ and
$\cc(T_n\setminus w_n)$ are eventually monotonic (for the last one the proof is the same).

\textit{2.} Suppose there are two strongly connected components in the automaton $\A_{ic}$ without
outgoing arrows which are labeled by different numbers. Let $v$ and $u$ be finite words such that starting at the initial state of $\A_{ic}$ we end at the first and the second components respectively. Then for the infinite sequence $vuvu\ldots$ the
limit in Proposition~\ref{prop_ends_limit} does not exist, and we get a contradiction.
\end{proof}

Proposition~\ref{prop_infcomp_limit} together with Theorem~\ref{thm_number_of_con_comp} imply the
following method to find the number of infinite components in $T_w\setminus w$ for $w\in\xo$.

\begin{prop}\label{prop_infcomp_A_ic}
Let $w=x_1x_2\ldots\in\xo$ be a regular or critical sequence. The number of infinite connected components in $T_w\setminus w$ is equal to the label of a strongly connected component of $\A_{ic}$ that is visited infinitely often, when the automaton reads the sequence $w$.
\end{prop}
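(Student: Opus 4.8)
The plan is to combine the convergence result of Proposition~\ref{prop_infcomp_limit} with the recognizing property of $\A_{ic}$ recorded in Theorem~\ref{thm_number_of_con_comp}, and then to read off the limit from the structure of $\A_{ic}$ via Lemma~\ref{lemma_properties_A_ic}. Since $\A_{ic}$ is a finite deterministic automaton with a unique initial state and a well-defined transition for every letter, reading the sequence $w=x_1x_2\ldots$ produces a unique infinite path $q_0\xrightarrow{x_1}q_1\xrightarrow{x_2}q_2\xrightarrow{x_3}\cdots$, where $q_0=\{\emptyset\}$ and $q_n$ is the state reached after accepting the prefix $w_n$. By Theorem~\ref{thm_number_of_con_comp}, the label $\ell(q_n)$ of $q_n$ equals $\pc(T_n\setminus w_n)$, the number of connected components of $T_n\setminus w_n$ containing a post-critical vertex.

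Next I would invoke Proposition~\ref{prop_infcomp_limit}, which applies precisely because $w$ is assumed regular or critical. It gives
\[
\ic(T_w\setminus w)=\lim_{n\to\infty}\pc(T_n\setminus w_n)=\lim_{n\to\infty}\ell(q_n),
\]
so the task reduces to identifying this limit with the label of the strongly connected component visited infinitely often along $(q_n)$.

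The key combinatorial observation is that, because $\A_{ic}$ has finitely many states, the set of states occurring infinitely often along $(q_n)$ is nonempty and lies in a single strongly connected component $\mathcal{C}$. Indeed, if two states $a$ and $b$ both occur infinitely often, then $b$ occurs after some occurrence of $a$ and $a$ occurs after some occurrence of $b$, so each is reachable from the other and they share a strongly connected component. Moreover, once the path enters $\mathcal{C}$ it cannot leave: any state lying strictly between two visits to $\mathcal{C}$ would be both reachable from $\mathcal{C}$ and able to reach $\mathcal{C}$, hence would itself belong to $\mathcal{C}$. Consequently $q_n\in\mathcal{C}$ for all large $n$, and $\mathcal{C}$ is the (unique) strongly connected component visited infinitely often.

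Finally, Lemma~\ref{lemma_properties_A_ic}(1) ensures that all states of $\mathcal{C}$ carry the same label $\ell$, so that $\ell(q_n)=\ell$ for all sufficiently large $n$ and the displayed limit equals $\ell$. Combining this with the equation above yields $\ic(T_w\setminus w)=\ell$, as claimed. I do not expect a serious obstacle: the only non-formal ingredient is the eventual confinement of the path to a single strongly connected component, and the sole point requiring care is that the labels along the path could a priori fail to stabilize (for instance by oscillating between components of different labels). This is exactly excluded here, since the path is trapped in $\mathcal{C}$ and Lemma~\ref{lemma_properties_A_ic}(1) makes the label constant there — ultimately a consequence of the eventual monotonicity of $\pc(T_n\setminus w_n)$ established in Proposition~\ref{prop_infcomp_limit}.
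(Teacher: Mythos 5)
Your proof is correct and takes essentially the same approach as the paper, which states this proposition as a direct consequence of Proposition~\ref{prop_infcomp_limit} combined with Theorem~\ref{thm_number_of_con_comp}, leaving the details implicit. Your explicit verification that the reading path eventually gets trapped in a single strongly connected component, whose label is constant by Lemma~\ref{lemma_properties_A_ic}, merely fills in the routine steps the paper omits.
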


\subsection{The number of ends of tile graphs}

The characterization of the number of infinite components in the graph $T_w\setminus w$ together with
Proposition~\ref{prop_ends_limit} allows us to describe the number of ends of $T_w$.

Since every critical sequence $w$ is periodic, we can algorithmically find the number of ends of the graph
$T_w$ using Proposition~\ref{prop_ends_limit} and the automaton $\A_{ic}$. Now we introduce some
notations. Fix $k\geq 1$ and let $EC_{=k}$ be the union of cofinality classes of critical sequences $w$
whose tile graph $T_w$ has $k$ ends. Similarly we define the sets $EC_{>k}$ and $EC_{<k}$. Let
\begin{itemize}
\item $\A_{ic}(k)$ be the subgraph of $\A_{ic}$ spanned by the strongly connected components labeled by numbers $\geq k$;
\item $\mathscr{R}_{\geq k}$ be the one-sided sofic subshift given by the graph $\A_{ic}(k)$, i.e., $\mathscr{R}_{\geq k}$ is the set of all sequences that can be read along right-infinite paths in $\A_{ic}(k)$ starting at
any state;
\item $E_{\geq k}$ be the set of all sequences which are cofinal to some sequence from $\mathscr{R}_{\geq k}$. Since the set $\mathscr{R}_{\geq k}$ is shift-invariant, the set $E_{\geq k}$
coincides with $X^{*}\mathscr{R}_{\geq k}=\{vw | v\in X^{*}, w\in \mathscr{R}_{\geq k}\}$.
\end{itemize}


\begin{theor}\label{thm_number_of_ends}
The tile graph $T_w$ has $\geq k$ ends if and only if $w\in E_{\geq k}\setminus EC_{<k}$. Hence, the tile
graph $T_w$ has $k$ ends if and only if $w\in E_{\geq k} \setminus \left(E_{\geq k+1} \cup EC_{>k}\cup
EC_{<k}\right)$.
\end{theor}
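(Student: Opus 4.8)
The plan is to prove the displayed ``$\ge k$'' biconditional first and then obtain the ``$k$ ends'' statement by an elementary set computation. Throughout I use three consequences of the preceding results. By Proposition~\ref{prop_ends_limit} one has $\#Ends(T_w)=\lim_n \ic(T_{\sigma^n(w)}\setminus\sigma^n(w))$, and its proof in fact gives $\#Ends(T_w)\ge \ic(T_{\sigma^n(w)}\setminus\sigma^n(w))$ for \emph{every} $n$, with equality for all large $n$. A first corollary is \emph{tail invariance}: if $a$ and $b$ share a common suffix, then $\#Ends(T_a)=\#Ends(T_b)$, since for large $n$ the shifts $\sigma^n(a)$ and $\sigma^n(b)$ are both shifts of that suffix, so the two limits agree. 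Second, by Proposition~\ref{prop_properties_of_sequences} every $w\in\xo$ is either regular or cofinal to a unique critical sequence, and I treat these cases separately. The technical heart is the \emph{forward step}, valid for regular or critical $u$: if $\ic(T_u\setminus u)\ge k$ then $u\in E_{\ge k}$. Indeed, by Proposition~\ref{prop_infcomp_A_ic} the number $L_u:=\ic(T_u\setminus u)$ is the label of the strongly connected component into which the run of $\A_{ic}$ on $u$ (from the initial state $\{\emptyset\}$) eventually settles, and by Proposition~\ref{prop_infcomp_limit} the labels $\pc(T_n\setminus u_n)$ are eventually non-increasing, hence eventually equal to $L_u$; thus a whole tail of this run stays among states labelled $L_u\ge k$, i.e.\ inside $\A_{ic}(k)$, so some shift $\sigma^m(u)\in\mathscr{R}_{\ge k}$ and $u\in E_{\ge k}$.

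For \emph{regular} $w$ I claim $\#Ends(T_w)\ge k\Leftrightarrow w\in E_{\ge k}$; as a regular sequence is cofinal to no critical sequence, $w\notin EC_{<k}$, so this is the asserted equivalence. For ($\Rightarrow$): for large $n$ one has $\ic(T_{\sigma^n(w)}\setminus\sigma^n(w))=\#Ends(T_w)\ge k$, so the forward step applied to the regular sequence $\sigma^n(w)$ gives $\sigma^n(w)\in E_{\ge k}$, and since membership in $E_{\ge k}=X^{*}\mathscr{R}_{\ge k}$ depends only on the suffixes of a sequence, $w\in E_{\ge k}$. For ($\Leftarrow$): choose $m$ with $\sigma^m(w)\in\mathscr{R}_{\ge k}$ and a state $p\in\A_{ic}(k)$ from which $\sigma^m(w)$ is read inside $\A_{ic}(k)$; as every state of $\A_{ic}$ is reachable from $\{\emptyset\}$, pick $v'$ with $\{\emptyset\}\xrightarrow{\,v'\,}p$ and set $z:=v'\sigma^m(w)$. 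By determinism the run on $z$ settles inside $\A_{ic}(k)$, so $L_z\ge k$; moreover $z$ is again regular (no shift of it is critical, because $\sigma^{|v'|}(z)=\sigma^m(w)$ is regular and every shift of a critical sequence is critical), whence $\ic(T_z\setminus z)=L_z\ge k$ by Proposition~\ref{prop_infcomp_A_ic} and therefore $\#Ends(T_z)\ge \ic(T_z\setminus z)\ge k$. Since $z$ and $w$ share the suffix $\sigma^m(w)$, tail invariance gives $\#Ends(T_w)=\#Ends(T_z)\ge k$.

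For $w$ \emph{cofinal to a critical} $u$ we have $T_w=T_u$ (Lemma~\ref{cofinality}) and $m:=\#Ends(T_w)=\#Ends(T_u)$, so $w\in EC_{=m}$ and $w\notin EC_{<k}\Leftrightarrow m\ge k$. Because $u$ is periodic, the sequence $n\mapsto \ic(T_{\sigma^n(u)}\setminus\sigma^n(u))$ is periodic and convergent, hence constant equal to $m$; in particular $\ic(T_u\setminus u)=m$. If $m\ge k$ the forward step gives $u\in E_{\ge m}\subseteq E_{\ge k}$ (note $\A_{ic}(m)\subseteq\A_{ic}(k)$, so $\mathscr{R}_{\ge m}\subseteq\mathscr{R}_{\ge k}$), and $E_{\ge k}$ is a union of cofinality classes, so $w\in E_{\ge k}$; thus $w\in E_{\ge k}\setminus EC_{<k}$, matching $\#Ends(T_w)\ge k$. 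If $m<k$ both sides fail. Together with the regular case this proves $\#Ends(T_w)\ge k\Leftrightarrow w\in E_{\ge k}\setminus EC_{<k}$.

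Finally, set $A_j:=E_{\ge j}\setminus EC_{<j}=\{w:\#Ends(T_w)\ge j\}$; the sequences with exactly $k$ ends form $A_k\setminus A_{k+1}$, and splitting $\xo$ into regular and critical-cofinal parts shows this equals $E_{\ge k}\setminus(E_{\ge k+1}\cup EC_{>k}\cup EC_{<k})$ \emph{provided} the forward step admits a converse on critical sequences, i.e.\ $u\in E_{\ge j}\Rightarrow\#Ends(T_u)\ge j$ for critical $u$, equivalently $EC_{=k}\cap E_{\ge k+1}=\emptyset$. This sharp equivalence is the main obstacle, and it is genuinely delicate: because $\mathscr{R}_{\ge j}$ is read \emph{from an arbitrary state}, a cycle of $\A_{ic}$ reading the period of $u$ could a priori carry a label one larger than $\ic(T_u\setminus u)=\#Ends(T_u)$, the surplus coming from the spurious boundary component described in the first remark following Proposition~\ref{prop_infcomp_limit}. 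I would resolve this by analysing the cycles of $\A_{ic}$ that read a critical period: using reachability from $\{\emptyset\}$ together with the bound $\ic(T\setminus\cdot)\le\#Ends$, any such cycle of label exceeding $\#Ends(T_u)$ would produce a punctured tile graph, centred at a point cofinal to $u$, with more infinite components than ends, the only possible discrepancy being the single boundary class that merges with the removed centre precisely when $u$ is read at its own phase. The remaining point of the plan is therefore to show, from the model-graph edges defining $\A_{ic}$, that this boundary class cannot survive as a separate part through a full period, so that the run from $\{\emptyset\}$ already attains the maximal period-cycle label; this gives the converse and hence the stated formula for ``$k$ ends''.
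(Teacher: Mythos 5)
Your proof of the first equivalence is correct and is in substance the paper's own argument: the paper proves exactly the regular-sequence case (your construction $z=v'\sigma^m(w)$ is its $vw'$, and your tail-invariance transfer replaces its correspondence, from the proof of Proposition~\ref{prop_ends_limit}, between infinite components of $T_{w'}\setminus w'$ and of $T_{vw'}\setminus X^{|v|}w'$), while your explicit treatment of critical and critical-cofinal sequences fills in a step the paper leaves implicit. Up to that point there is nothing to object to.

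The obstacle you isolate at the end is, however, fatal to your plan --- and in fact to the second sentence of the theorem as printed. The lemma you need, $EC_{=k}\cap E_{\geq k+1}=\emptyset$, is \emph{false}. The paper's own Basilica example (Section~\ref{Section_Examples}) is a counterexample: there $\pcset=\{a,b,c\}$ with $a=0^{-\omega}$, $b=(01)^{-\omega}$, $c=(10)^{-\omega}$, and in $\A_{ic}$ the two label-$2$ states $ac|b$ and $ab|c$ form a cycle along which $01$ is read (one checks from the inflation rule that the vertices $001\in T_3$, $0010\in T_4$, $00101\in T_5$ induce the partitions $ac|b$, $ab|c$, $ac|b$ respectively); hence $(01)^{\omega}\in\mathscr{R}_{\geq 2}\subseteq E_{\geq 2}$, which also matches the paper's own identification $E_{\geq2}=X^{*}(0X)^{\omega}$ in its formula for $E_2$. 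Yet the paper states in the same section that $T_{(01)^{\omega}}$ has \emph{one} end, so $(01)^{\omega}\in EC_{=1}\cap E_{\geq 2}$. The mechanism is exactly the one you feared: the run of $\A_{ic}$ on $(01)^{\omega}$ from the initial state settles in the label-$1$ component $\{ab,ac\}$, but a different cycle of label $2$ also reads the period, and its spurious singleton part ($\{b\}$, alternating with $\{c\}$) survives every period --- for a point cofinal to $(01)^{\omega}$ that part corresponds to a finite component of the punctured graph containing the critical vertex, as in the Remark after Proposition~\ref{prop_infcomp_limit}. So no analysis of the model graph can rule this out; with $k=1$ the theorem's second formula wrongly excludes $Cof((01)^{\omega})\cup Cof((10)^{\omega})$ from the one-ended set, and indeed the paper itself, in that example, computes $E_1=\xo\setminus E_2$ rather than by the displayed formula of the theorem.

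The conclusion is that the gap lies in the statement, not in your argument, and the repair is precisely your $A_k\setminus A_{k+1}$. From the first part, $A_j:=E_{\geq j}\setminus EC_{<j}=\{w:\#Ends(T_w)\geq j\}$, so the exactly-$k$ set is $A_k\setminus A_{k+1}$; using that the sets $EC_{=j}$ are pairwise disjoint (each cofinality class contains at most one critical sequence, Proposition~\ref{prop_properties_of_sequences}) and that $EC_{=k}\subseteq E_{\geq k}$ and $EC_{>k}\subseteq E_{\geq k+1}$ (both consequences of the first part), a short set computation gives
\[
A_k\setminus A_{k+1}=\Bigl[E_{\geq k}\setminus\bigl(E_{\geq k+1}\cup EC_{>k}\cup EC_{<k}\bigr)\Bigr]\cup EC_{=k}.
\]
This corrected identity is what you should prove and state in place of the second sentence; the extra term $EC_{=k}$ cannot be dropped. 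Note also that the paper's own proof, which establishes only the regular-sequence equivalence and then says ``Hence'', never addresses this point, so your analysis is sharper than the source on exactly the step where the source errs.
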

\begin{proof}
We need to prove that for a regular sequence $w$ the graph $T_w$ has at least $k$ ends if and only if
$w\in E_{\geq k}$. First, suppose $w\in E_{\geq k}$. Then $w$ is cofinal to a sequence $w'\in
\mathscr{R}_{\geq k}$, which is also regular. The sequences $w$ and $w'$ belong to the same tile graph
$T_w=T_{w'}$. There exists a finite word $v$ such that for the sequence $vw'$ the corresponding path in
$\A_{ic}$ starting at the initial state eventually lies in the subgraph $\A_{ic}(k)$. Then the graph
$T_{vw'}\setminus vw'$ has $\geq k$ infinite components by Proposition~\ref{prop_infcomp_limit}. Using the
correspondence between infinite components of $T_{w'}\setminus w'$ and of $T_{vw'}\setminus X^{|v|}w'$
shown in the proof of Proposition~\ref{prop_ends_limit} we get that the graph $T_{w'}\setminus w'$ has
$\geq k$ infinite components, and hence $T_{w'}=T_w$ has $\geq k$ ends.

For the converse, suppose the graph $T_{w}$ has $\geq k$ ends and the sequence $w$ is regular. Then
$ic(T_{\sigma^n(w)}\setminus \sigma^n(w))\geq k$ for some $n$ by Propositions~\ref{prop_ends_limit} and
\ref{prop_infcomp_limit}. Hence some shift $\sigma^n(w)$ of the sequence $w$ is in $\mathscr{R}_{\geq k}$
and thus $w\in E_{\geq k}$.
\end{proof}

Example with $IMG(z^2+i)$ in Section~\ref{Section_Examples} shows that we cannot expect to get a
description using subshifts of finite type, and indeed the description using sofic subshifts is the best
possible in these settings.

\subsection{From ends of tile graphs to ends of Schreier graphs}

Now we can describe how to derive the number of ends of Schreier graphs from the number of ends of tile
graphs.

\begin{prop}\label{prop_Schreier_construction}
\begin{enumerate}
\item The Schreier graph $\gr_w$ coincides with the tile graph $T_w$ for every regular sequence $w\in\xo$.
\item Let $w\in\xo$ be a critical sequence, and let $O(w)$ be the set of all critical sequences $v\in\xo$ such
that $g(w)=v$ for some $g\in G$. The Schreier graph $\gr_w$ is constructed by
taking the disjoint union of the orbital tile graphs $T_v$ for $v\in O(w)$ and
connecting two critical sequences $v_1,v_2\in O(w)$ by an edge whenever $s(v_1)=v_2$ for some
$s\in S$.
\end{enumerate}
\end{prop}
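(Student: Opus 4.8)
The plan is to split every edge of the Schreier graph $\gr_w$ into two types according to whether or not it already belongs to the tile graph, and to control each type separately. Recall that $v$ and $u$ are adjacent in $\gr_w$ precisely when $s(v)=u$ for some $s\in S$, whereas this is an edge of $T_w$ only when in addition $s|_{v'}=1$ for some finite beginning $v'$ of $v$. Call the edge \emph{tile-type} in the former case and \emph{critical-type} otherwise. The first observation I would record is that a tile-type edge always joins two cofinal sequences (if $s|_{v'}=1$, then $v$ and $s(v)$ agree past the prefix $v'$), while a critical-type edge joins two \emph{critical} sequences: if $s|_{v'}\neq 1$ for every finite beginning $v'$ of $v$, then following the arrows of $S$ from $s$ while reading $v$ produces a right-infinite path in $S\setminus\{1\}$ labelled $v\,|\,s(v)$, so $v$ is critical by definition, and applying the same argument to $s^{-1}\in S$ (using $S=S^{-1}$) shows $s(v)$ is critical as well.

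For part (1), let $w$ be regular. I would first identify the vertex sets: by Lemma~\ref{cofinality} the orbital tile graph $T_w$ has vertex set $Cof(w)$, and $Cof(w)$ is contained in the orbit of $w$. For the reverse inclusion I use Proposition~\ref{prop_properties_of_sequences}: for regular $w$ some prefix $v'$ satisfies $s|_{v'}=1$ for all $s\in S$, so each generator moves $w$ only inside a finite prefix; by induction on the length of a word in $S$ every element of the orbit is cofinal to $w$, whence the orbit equals $Cof(w)$. Since the cofinality class of a regular sequence consists of regular (hence non-critical) sequences, the observation above forbids critical-type edges, so every edge of $\gr_w$ is tile-type and thus an edge of $T_w$; the reverse containment of edges is automatic. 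Therefore $\gr_w=T_w$.

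For part (2), let $w$ be critical and set $V=\bigsqcup_{v\in O(w)}Cof(v)$. These cofinality classes are pairwise disjoint, because distinct critical sequences cannot be cofinal (Proposition~\ref{prop_properties_of_sequences}), and each is contained in the orbit of $w$, so $V\subseteq \mathrm{orbit}(w)$. To get equality I would argue that $V$ is closed under the edges of the connected graph $\gr_w$: starting from a vertex $a\in Cof(v_0)$ with $v_0\in O(w)$, a tile-type edge sends $a$ to a cofinal sequence, hence to another vertex of $Cof(v_0)$, while a critical-type edge forces $a$ itself to be critical, so $a=v_0$, and its other endpoint is a critical sequence in the orbit, i.e.\ an element of $O(w)\subseteq V$. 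Since $w\in V$ and $\gr_w$ is connected, $V=\mathrm{orbit}(w)$. It then remains to read off the edge structure: an edge with both endpoints in a single $Cof(v)$ cannot be critical-type (that would require two critical sequences in one cofinality class), so the subgraph induced on $Cof(v)$ is exactly $T_v$; and an edge between distinct classes $Cof(v_1)$ and $Cof(v_2)$ must be critical-type, hence it joins the unique critical representatives $v_1$ and $v_2$ and is present exactly when $s(v_1)=v_2$ for some $s\in S$. This is precisely the asserted description.

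The step I expect to be the main obstacle is the clean identification of the orbit with the union of cofinality classes, i.e.\ the closure argument in part (2): one must be sure that no vertex of the orbit escapes all the classes $Cof(v)$, which hinges on the dichotomy that every $\gr_w$-edge leaving a cofinality class is critical-type and therefore emanates from the unique critical vertex of that class. Verifying that the two edge types behave as claimed — in particular that critical-type edges join critical sequences at both ends, which uses $S=S^{-1}$ together with the description of critical sequences via right-infinite paths in $S\setminus\{1\}$ — is the technical heart, and everything else is bookkeeping with the definitions and Proposition~\ref{prop_properties_of_sequences}.
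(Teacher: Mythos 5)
Your proposal is correct and takes essentially the same route as the paper's own proof: identify the vertex sets through cofinality classes (Lemma~\ref{cofinality} together with Proposition~\ref{prop_properties_of_sequences}), and observe that any Schreier edge whose generator has nontrivial restriction on every prefix must join critical sequences, while all other edges are tile edges. The only difference is that you spell out two points the paper leaves implicit --- the connectivity/closure argument identifying the orbit of $w$ with the union of cofinality classes, and the use of $S=S^{-1}$ to conclude that \emph{both} endpoints of a non-tile edge are critical --- which strengthens rather than changes the argument.
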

\begin{proof}
\textit{1.} If the point $w$ is regular, then the set of vertices of $\gr_w$ is the
cofinality class $Cof(w)$, which is the set of vertices of $T_w$ by
Proposition~\ref{cofinality}. Suppose there is an edge between $v$ and $u$ in the graph
$\gr_w$. Then $s(v)=u$ for some $s\in S$. Since the sequence $w$ is regular, all the
sequences in $Cof(w)$ are regular, and hence there exists a finite beginning $v'$ of
$v$ such that $s|_{v'}=1$. Hence there is an edge between $v$ and $u$ in the tile
graph~$T_w$.

\textit{2.} If the point $w$ is critical, then the set of vertices of $\gr_w$ is the
union of cofinality classes $Cof(v)$ for $v\in O(w)$. Consider an edge $s(v_1)=v_2$ in
$\gr_w$. If this is not an edge of $T_v$ for $v\in O(w)$, then the restriction of $s$
on every beginning of $v_1$ is not trivial. Hence $v_1,v_2$ are critical, and this edge
was added under construction.
\end{proof}

The following corollary summarizes the relation between the number of ends of the Schreier graphs $\gr_w$
with the number of ends of the tile graphs $T_w$. It justifies the fact that, for our aims,
it was enough to study the number of ends and connected components in the tile graphs.
\begin{cor}\label{remarktile}
\begin{enumerate}
  \item If $w$ is a regular sequence, then $\# Ends(\gr_w)=\#Ends(T_{w})$.
   \item If $w$ is critical, then
   $$
   \# Ends(\gr_w)=\sum_{w'\in O(w)} \#Ends(T_{w'}),
$$
where the set $O(w)$ is from Proposition~\ref{prop_Schreier_construction}.
\end{enumerate}
\end{cor}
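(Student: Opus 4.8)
The plan is to read off both equalities directly from the structural description of $\gr_w$ given in Proposition~\ref{prop_Schreier_construction}, the only nonformal ingredient being the elementary fact that the number of ends of a locally finite graph is unchanged when one adds or removes finitely many edges. Part~1 is then immediate: by Proposition~\ref{prop_Schreier_construction}(1), when $w$ is regular the graphs $\gr_w$ and $T_w$ coincide \emph{as graphs} (same vertex set $Cof(w)$, same edge set), so they have identical ends and hence $\#Ends(\gr_w)=\#Ends(T_w)$.

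For part~2 I would first record the relevant finiteness. The set $O(w)$ consists of critical sequences in the orbit of $w$, and by Proposition~\ref{prop_properties_of_sequences} there are only finitely many critical sequences in total, so $O(w)$ is finite. By Proposition~\ref{prop_Schreier_construction}(2), the graph $\gr_w$ is obtained from the disjoint union $\bigsqcup_{v\in O(w)} T_v$ by adjoining one edge for each pair $v_1,v_2\in O(w)$ and each $s\in S$ with $s(v_1)=v_2$; since $O(w)$ and $S$ are both finite, this is a finite set of extra edges. Each $T_v$ is locally finite, being a subgraph of a Schreier graph of the finitely generated group $G$, hence so is $\gr_w$, and by Proposition~\ref{prop_ends_limit} each $T_v$ has finitely many ends, so the sum on the right-hand side is finite and well defined.

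The argument then rests on two end-counting facts that I would state and dispatch quickly. First, for a disjoint union of finitely many locally finite graphs the number of ends is the sum of the numbers of ends of the pieces: for any finite vertex set $F$, the connected components of $\bigl(\bigsqcup_{v} T_v\bigr)\setminus F$ are precisely the union over $v$ of the components of $T_v$ with its portion of $F$ removed, so the inverse system defining ends splits as a product. Second, passing from $\bigsqcup_{v} T_v$ to $\gr_w$ only adds finitely many edges, and this does not change the number of ends: if $F$ is any finite vertex set containing all endpoints of the added edges, then removing $F$ destroys every added edge, so $\gr_w\setminus F$ and $\bigl(\bigsqcup_v T_v\bigr)\setminus F$ have exactly the same connected components. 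As the finite sets $F$ containing those endpoints are cofinal in the directed system of all finite subsets, the two graphs have the same ends. Combining the two facts gives
\[
\#Ends(\gr_w)=\#Ends\Bigl(\bigsqcup_{v\in O(w)} T_v\Bigr)=\sum_{v\in O(w)}\#Ends(T_v),
\]
which is the claim. The single point needing care—and the place I would expect any genuine obstacle to lie—is the invariance of ends under finitely many edge additions, where one must verify the cofinality of the sets $F$ containing the new edge-endpoints so that no ends are spuriously merged or split; the cofinality remark above settles this, so no real difficulty remains.
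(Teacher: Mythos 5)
Your proposal is correct and follows the same route as the paper: the paper states this corollary without proof, as an immediate consequence of Proposition~\ref{prop_Schreier_construction}, and your argument is precisely that derivation with the two standard end-counting facts (additivity of ends over finite disjoint unions, and invariance of ends under adding finitely many edges, via the cofinality of finite vertex sets containing the new edges' endpoints) written out explicitly. The only nitpick is wording: the end space of a finite disjoint union splits as a coproduct (disjoint union) of the end spaces, not a product, but your component-wise justification and the resulting count are correct.
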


Using the automata $\A_c$ and $\A_{ic}$ one can construct similar automata for the number of components in the
Schreier graphs $\gr_n$ with a vertex removed. For every state of $\A_c$ or $\A_{ic}$ take the corresponding
partition of the post-critical set and combine components according to the edges $E(\gr\setminus T)$
described in the last paragraph in Section~\ref{subsection_BoundedAutomata}. For example, if
$\{\pcset_i\}_i$ is a state of $\A_{ic}$, then we glue every two components $\pcset_s$ and
$\pcset_t$ if $\{p,q\}\in E(\gr\setminus T)$ for some $p\in \pcset_s$ and $q\in\pcset_t$. We get a new
partition, and we label the state by the number of components in this partition. Basically, we get the
same automata, but vertices may be labeled in a different way.

A case of special interest is when all Schreier graphs have one end.
In our settings of groups generated by bounded automata, our construction enables us to
find a necessary and sufficient condition when all Schreier graphs $\Gamma_w$ have one end.

\begin{theor}\label{all-one-ended}
All orbital Schreier graphs $\Gamma_w$ for $w\in\xo$ have one end if and only if the following two
conditions hold:
\begin{enumerate}
\item all arrows along directed cycles in the automaton $S$ are labeled by $x|x$ for some $x\in X$ (depending on an arrow);
\item all strongly connected components of the automaton $\A_{ic}$ are labeled by $1$ (partitions consisting of one part).
\end{enumerate}
\end{theor}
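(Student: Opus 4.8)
The plan is to prove Theorem~\ref{all-one-ended} by connecting the two conditions to the two sources of ``extra'' ends in Schreier graphs: (a) critical sequences, whose orbital Schreier graphs $\gr_w$ are built by gluing several tile graphs $T_v$ together (Proposition~\ref{prop_Schreier_construction}(2)) and which can therefore acquire more ends even when every individual tile graph is one-ended; and (b) regular sequences, for which $\gr_w = T_w$ (Proposition~\ref{prop_Schreier_construction}(1)), so the number of ends of $\gr_w$ is governed entirely by the tile graph, hence by the labels along the path that $w$ traces in $\A_{ic}$ (Proposition~\ref{prop_infcomp_A_ic} and Proposition~\ref{prop_ends_limit}). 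The two conditions in the statement should correspond precisely to eliminating the two mechanisms: condition (1) forces every critical sequence to contribute only a single tile graph (and that tile graph to be one-ended), while condition (2) forces every tile graph $T_w$ to have exactly one end for every $w$.

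The forward direction ($\Leftarrow$) is the natural starting point. First I would handle the regular case: if every strongly connected component of $\A_{ic}$ is labeled by $1$, then for any regular or critical $w$ the component visited infinitely often along $w$ is labeled $1$, so by Proposition~\ref{prop_infcomp_A_ic} we have $\ic(T_w\setminus w)=1$, and then by Proposition~\ref{prop_ends_limit} (applied along shifts, using that shifts of regular/critical sequences stay regular/critical) the tile graph $T_w$ has exactly one end. This already gives $\#Ends(\gr_w)=\#Ends(T_w)=1$ for regular $w$ by Corollary~\ref{remarktile}(1). For critical $w$, I would use Corollary~\ref{remarktile}(2): $\#Ends(\gr_w)=\sum_{w'\in O(w)}\#Ends(T_{w'})$, where each summand is already $1$. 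So it remains to show condition (1) forces $|O(w)|=1$, i.e.\ that $w$ is the unique critical sequence in its orbit. The key observation is that condition (1) says every cycle in $S$ reads a sequence $x^\omega$ with matching input and output letters; recalling the remark after Proposition~\ref{prop_properties_of_sequences} that in basic form critical sequences are exactly the periodic $x^\omega$ read along right-infinite paths in $S\setminus\{1\}$, condition (1) means each generator fixes every critical sequence pointwise at the level of the critical letter, so distinct critical sequences cannot lie in the same orbit. This collapses the sum to a single term and gives $\#Ends(\gr_w)=1$.

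For the converse ($\Rightarrow$) I would argue contrapositively: if either condition fails, exhibit a $w$ with $\gr_w$ having more than one end. If condition (2) fails, some strongly connected component of $\A_{ic}$ carries label $\geq 2$; I would pick a word leading into that component and then loop inside it to build a regular sequence $w\in\mathscr{R}_{\geq 2}$ (using the sofic-subshift machinery preceding Theorem~\ref{thm_number_of_ends}, so that $w\in E_{\geq 2}$ and is not critical), whence $T_w$ has $\geq 2$ ends and $\gr_w=T_w$ does too. If condition (1) fails while (2) holds, then some cycle in $S$ reads an arrow $x|y$ with $x\neq y$; this produces a critical sequence $w$ with a nontrivial generator moving it to a cofinal but distinct critical sequence, so $|O(w)|\geq 2$ and, since each tile graph has one end under (2), Corollary~\ref{remarktile}(2) gives $\#Ends(\gr_w)\geq 2$.

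The step I expect to be the main obstacle is the precise translation of condition (1) into the statement ``$|O(w)|=1$ for every critical $w$,'' and conversely. The orbit $O(w)$ consists of critical sequences reachable from $w$ by group elements, and controlling it requires understanding how generators act on periodic sequences $x^\omega$; one must verify that a cycle-arrow labeled $x|x$ cannot send $x^\omega$ to a different critical sequence, while an arrow $x|y$ with $x\neq y$ genuinely does produce a distinct critical sequence in the same orbit. This hinges on the basic-form normalization and on the characterization of critical sequences via right-infinite paths in $S\setminus\{1\}$, so I would reduce to the basic form at the outset (as permitted by the passage to a power $X^m$ of the alphabet, which changes neither the group nor its limit/Schreier structure) and then carry out the orbit analysis there, where critical sequences are exactly the $x^\omega$ and the bookkeeping is cleanest.
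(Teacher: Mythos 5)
Your proposal follows essentially the same route as the paper's own proof: condition (1) is translated into the statement that no generator can send a critical sequence to a distinct critical sequence (so $O(w)=\{w\}$ and $\Gamma_w=T_w$ by Proposition~\ref{prop_Schreier_construction}), condition (2) is translated into one-endedness of every tile graph via the $\A_{ic}$/sofic-subshift machinery behind Theorem~\ref{thm_number_of_ends}, and the converse is obtained exactly as you describe, by producing either a multi-ended tile graph or a critical sequence whose Schreier graph splits into several infinite tile graphs joined by finitely many edges. The differences are cosmetic (you invoke Corollary~\ref{remarktile}(2) where the paper argues directly that such a union has more than one end, and you normalize to the basic form for the cycle analysis), so the proposal is correct and matches the paper's argument.
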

\begin{proof}
Let us show that the first condition is equivalent to the property that for every $w\in\xo$ the Schreier
graph $\Gamma_w$ and tile graph $T_w$ coincide. If there exists a directed cycle that does not satisfy
condition 1, then there exist two different critical sequences $w,w'$ that are connected in the Schreier
graph, i.e., $s(w)=w'$ for some $s\in S$. In this case $\Gamma_{w}\neq T_{w}$, because by
Proposition~\ref{prop_properties_of_sequences} different critical sequences are non-cofinal, and therefore
belong to different tile graphs. And vice versa, the existence of such critical sequences contradicts
condition 1. Therefore condition 1 implies $O(w)=\{w\}$ for every critical sequence $w$, and thus
$\Gamma_w=T_w$ by Proposition~\ref{prop_Schreier_construction}.

Theorem~\ref{thm_number_of_ends} implies that condition 2 is equivalent to the statement that every
tile graph $T_w$ has one end. Therefore, if the conditions $1$ and $2$ hold, then any Schreier graph
coincides with the corresponding tile graph which has one end.

Conversely, Proposition~\ref{prop_Schreier_construction} implies that if the Schreier graph $\Gamma_{w}$
for a critical sequence $w$ does not coincide with the corresponding tile graph $T_{w}$, then the number
of ends of $\Gamma_w$ is greater than one. (The graph $\Gamma_w$ is a disjoint union of more than one
infinite tile graphs $T_v$, $v\in O(w)$ connected by a finite number of edges.) Therefore, if all Schreier
graphs $\Gamma_w$ have one end, then they should coincide with tile graphs (condition 1 holds) and tile
graphs have one end (condition 2 holds).
\end{proof}

\begin{rem}\label{remarkanoi}
The Hanoi Towers group $H^{(3)}$ \cite{gri_sunik:hanoi} is an example of a group generated by a bounded
automaton for which all orbital Schreier graphs $\Gamma_w$ have one end. On the other hand, this group is
not indicable (since its abelianization is finite) but can be projected onto the infinite dihedral group
\cite{delzant_grigorchuk}. This implies that it contains a normal subgroup $N$ such that the Schreier
coset graph associated with $N$ has two ends. Clearly, for what said above, $N$ does not coincide with the
stabilizer of $w$ for any $w\in X^{\omega}$.
\end{rem}

\subsection{The number of infinite components of tile graphs almost surely}

The structure of the automaton $\A_{ic}$ allows to get results about the measure of
infinite sequences $w\in\xo$ for which the tile graphs $T_w\setminus w$ have a given number of infinite components. We recall that the space $X^{\omega}$ is endowed with the uniform measure.

\begin{rem}\label{rem_subword_and_inf_comp}
It is useful to notice that we can construct a finite word $u\in X^{*}$ such that starting at any state of
the automaton $\A_{ic}$ and following the word $u$ we end in some strongly connected component without
outgoing edges. If these strongly connected components correspond to the partition of the post-critical set
on $k$ parts, then it follows that $\pc(T_n\setminus v_1uv_2)=k$ for all words $v_1,v_2\in X^{*}$ with
$|v_1uv_2|=n$. In other words, $\pc(T_n\setminus v)=k$ for every $v$ that contains $u$ as a subword.
\end{rem}

By Proposition~\ref{prop_infcomp_limit} we get the
description of sequences which correspond to infinite components using the automaton $\A_{ic}$ (but only
for regular and critical sequences).

\begin{cor}\label{cor_inf_comp_almost_all_seq}
The number of infinite connected components of the graph $T_w\setminus w$ is almost surely the same for all
sequences $w\in\xo$. This number coincides with the label of the strongly connected components of the
automaton $\A_{ic}$ without outgoing arrows.
\end{cor}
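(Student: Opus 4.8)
The plan is to reduce the statement to the two combinatorial facts already established about the automaton $\A_{ic}$: that a strongly connected component without outgoing arrows is reached after reading the fixed word $u$ of Remark~\ref{rem_subword_and_inf_comp}, and that all such sink components carry one and the same label $k$ by Lemma~\ref{lemma_properties_A_ic}. The only genuinely probabilistic input will be two almost-sure statements about a uniformly random $w\in\xo$, after which Proposition~\ref{prop_infcomp_A_ic} does the rest.

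First I would check that almost every $w$ is regular, so that Proposition~\ref{prop_infcomp_A_ic} is applicable. By Proposition~\ref{prop_properties_of_sequences} there are only finitely many critical sequences, each of them periodic. A non-regular $w$ is, by definition, one for which some shift $\sigma^n(w)$ is critical; such a $w$ is therefore cofinal to one of these finitely many periodic sequences, hence of the form $v c$ for a finite word $v$ and a critical sequence $c$. The set of all such $w$ is countable and thus has measure zero, so almost every $w$ is regular.

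Next I would show that almost every $w$ contains the word $u$ of Remark~\ref{rem_subword_and_inf_comp} as a subword. Cutting $w$ into consecutive blocks of length $|u|$, each block equals $u$ independently with probability $|X|^{-|u|}>0$, so the probability that no block equals $u$ is $\prod(1-|X|^{-|u|})=0$; in fact, by Borel--Cantelli, infinitely many blocks equal $u$ almost surely. For such a $w$, once the deterministic automaton $\A_{ic}$ has read an occurrence of $u$, its current state lies in a strongly connected component without outgoing arrows; since such a component cannot be left, the $\A_{ic}$-path stays there forever, and hence the component visited infinitely often is a sink component. By part~(1) of Lemma~\ref{lemma_properties_A_ic} its label is well defined, and by part~(2) this label equals the common value $k$ of all sink components.

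Combining the two full-measure events, almost every $w$ is regular and its $\A_{ic}$-path is eventually trapped in a sink component of label $k$; Proposition~\ref{prop_infcomp_A_ic} then yields $\ic(T_w\setminus w)=k$ for almost every $w$, which is exactly the assertion. I expect the only point requiring care to be the verification that the infinitely-visited component is genuinely a sink, so that part~(2) of Lemma~\ref{lemma_properties_A_ic} may be invoked; this is precisely what the subword $u$ of Remark~\ref{rem_subword_and_inf_comp} guarantees, and it is the reason the probabilistic step is needed at all rather than a purely topological one.
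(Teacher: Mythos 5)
Your proof is correct and coincides with the paper's own argument: the paper's proof of this corollary explicitly says that the result ``comes from Remark~\ref{rem_subword_and_inf_comp} and the fact that the set of all sequences $w\in\xo$ that contain a fixed word as a subword is of full measure,'' which is exactly the route you take, with the missing details (countability, hence measure zero, of the non-regular sequences, and the block/Borel--Cantelli argument for the subword event) correctly supplied. The paper's first-listed justification is phrased slightly differently --- Lemma~\ref{lemma_properties_A_ic}(2) plus the standard fact that sequences whose $\A_{ic}$-path stays forever in a strongly connected component \emph{with} an outgoing arrow form a null set (spectral radius $<|X|$) --- but your version is precisely the paper's stated alternative, fully fleshed out.
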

\begin{proof}
The measure of non-regular sequences is zero. For regular sequences $w\in\xo$ we can use the automaton
$\A_{ic}$ to find the number $\ic(T_w\setminus w)$. Then the corollary follows from
Lemma~\ref{lemma_properties_A_ic} item 2 and the standard fact that the measure of all sequences that are
read along paths in a strongly connected component with an outgoing arrow is zero (for example, this fact
follows from the observation that the adjacency matrix of such a component has spectral radius less than
$|X|$). Another explanation comes from Remark~\ref{rem_subword_and_inf_comp} and the fact that the set of
all sequences $w\in\xo$ that contain a fixed word as a subword is of full measure.
\end{proof}

The corollary does not hold for the number of all connected components of $T_w\setminus w$, see examples in
Section~\ref{Section_Examples}. However, given any number $k$ we can use the automaton $\A_c$ to compute
the measure of the set $C(k)$ 
 of all sequences $w\in\xo$ such that the graph $T_w\setminus w$ has $k$
components. As shown in the previous proof only strongly connected components without outgoing arrows
contribute the set of sequences with a non-zero measure. Let $\Lambda_k$ be the collection of all strongly
connected components of $\A_c$ without outgoing arrows and labeled by the number $k$. Let $V_k$ be the set
of finite words $v\in X^{*}$ with the property that starting at the initial state and following arrows
labeled by $v$ we end at a component from $\Lambda_k$, and any prefix of $v$ does not satisfy this
property. Then the measure of $C(k)$  
is equal to the sum $\sum_{v\in V_k} |X|^{-|v|}$. Since the automaton $\A_c$ is finite,
this measure is always a rational number and can be computed algorithmically.

\subsection{The number of ends almost surely}\label{Section_number_ends}\label{section_ends_a_s}
Corollary~\ref{cor_inf_comp_almost_all_seq} together with Proposition~\ref{prop_ends_limit} imply that the
tile graphs $T_w$ (and thus the Schreier graphs $\gr_w$) have almost surely the same number of ends, and
that this number is equal to the label of the strongly connected components of $\A_{ic}$ without outgoing
arrows. As was mentioned in introduction, this fact actually holds for any finitely generated self-similar
group, which acts transitively on the levels $X^n$ for all $n\in\mathbb{N}$ (see Proposition~6.10 in
\cite{AL}). In our setting of bounded automata we get a stronger description of the sequences $w$ for
which the tile graph $T_w$ has non-typical number of ends.

\begin{prop}\label{prop_more_ends}
There are only finitely many Schreier graphs $\gr_w$ and tile graphs $T_w$ with more than two ends.
\end{prop}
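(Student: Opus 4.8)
The plan is to reduce everything to regular sequences and tile graphs, and then to read the number of ends off the strongly connected component (SCC) of $\A_{ic}$ in which $w$ settles. First, by Corollary~\ref{remarktile} and Proposition~\ref{prop_Schreier_construction}, for regular $w$ one has $\#Ends(\gr_w)=\#Ends(T_w)$, while for critical $w$ the graph $\gr_w$ is a finite union of the tile graphs $T_{w'}$, $w'\in O(w)$. Since, by Proposition~\ref{prop_properties_of_sequences}, there are only finitely many critical sequences and each cofinality class contains at most one, the non-regular cofinality classes are finite in number, so they contribute only finitely many graphs $T_w$ and $\gr_w$. It therefore suffices to bound the number of \emph{regular} cofinality classes whose tile graph has more than two ends (recall that $T_w$ depends only on $Cof(w)$, which for regular $w$ is its vertex set by Lemma~\ref{cofinality}).

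Next I would express the end count through $\A_{ic}$. Reading $w$ in $\A_{ic}$, the path eventually enters and stays in a single SCC $D(w)$, since the SCCs form a DAG; this $D(w)$ depends only on the tail of $w$ and so is unchanged by the shift. Combining Proposition~\ref{prop_infcomp_A_ic} with Proposition~\ref{prop_ends_limit} gives, for regular $w$, $\#Ends(T_w)=\lim_n \ic(T_{\sigma^n(w)}\setminus\sigma^n(w))=\mathrm{label}(D(w))$. Thus $\#Ends(T_w)\ge 3$ precisely when $w$ settles in an SCC labelled $\ge 3$; equivalently, by Theorem~\ref{thm_number_of_ends}, $w\in E_{\ge 3}=X^{*}\mathscr{R}_{\ge 3}$, the cofinal closure of the sofic subshift read along $\A_{ic}(3)$. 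Distinct tile graphs correspond to distinct cofinality classes meeting $\mathscr{R}_{\ge 3}$, and these are finite in number precisely when \emph{every SCC of $\A_{ic}$ labelled $\ge 3$ is a single directed cycle}: each such sequence then settles into one of finitely many cycles, is eventually periodic with one of finitely many periods, and so lies in one of finitely many cofinality classes (a branching recurrent component would instead produce uncountably many).

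The crux is therefore this structural claim, that at each state of a label-$\ge 3$ SCC exactly one letter keeps the path inside the component. I would prove it by analysing the inflation step: given a partition $P$ of $\pcset$ into $\ge 3$ parts induced by a vertex $v$, the transition $P\xrightarrow{\ x\ }F$ is computed on the auxiliary graph $M_{P,x}\setminus K$ of Lemma~\ref{lemma construction Aic}, and one must show that for all but one letter $x$ the reintroduction of the removed vertex reconnects two of the parts, so that $|F|<|P|$ and the path drops to a strictly smaller-labelled SCC. Monotonicity (Proposition~\ref{prop_infcomp_limit}) together with the constancy of labels on an SCC (Lemma~\ref{lemma_properties_A_ic}) shows that an edge inside a label-$\ge 3$ SCC cannot decrease the count, so the claim amounts to ruling out two count-preserving letters at a state with $\ge 3$ parts. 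This is the genuinely hard, geometric point: it expresses the finite ramification of the post-critically finite limit space, where a point that disconnects a neighbourhood into $\ge 3$ pieces must lie on a unique, eventually periodic branching address, whereas order-two cut points form a whole subshift and are exactly what makes the sinks of $\A_{ic}$ branch. As a by-product, since for $|X|\ge 2$ every sink SCC has out-degree $\ge 2$ at each state and hence cannot be a simple cycle, this structural claim also forces the sinks---and thus the almost sure number of ends---to be labelled by at most $2$, which is both consistent with and necessary for the assertion that only finitely many graphs have more than two ends.
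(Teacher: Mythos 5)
You have correctly located where the content of the proposition lies, but you have not proved it. Your reduction is fine in outline: the finitely many non-regular cofinality classes are handled by Proposition~\ref{prop_Schreier_construction} and Corollary~\ref{remarktile}, the regular sequences with at least three ends are characterized as $E_{\geq 3}=X^{*}\mathscr{R}_{\geq 3}$ by Theorem~\ref{thm_number_of_ends}, and finiteness would indeed follow from your structural claim that every strongly connected component of $\A_{ic}$ labelled $\geq 3$ admits exactly one letter per state. But that structural claim \emph{is} the proposition, and you leave it unproven --- you say yourself it is ``the genuinely hard, geometric point'' and offer only the heuristic that it expresses finite ramification of the limit space. Moreover, the route you sketch for it (show that from a state with $\geq 3$ parts all but one letter $x$ force $|F|<|P|$) aims at a statement stronger than what is true: several letters can lead to partitions with $\geq 3$ parts --- they are merely forced to be \emph{different} partitions --- so a proof along those lines would break down. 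There is also an outright error in your middle step: $D(w)$ is \emph{not} shift-invariant and $\#Ends(T_w)\neq\mathrm{label}(D(w))$ in general; one only has $\#Ends(T_w)=\lim_{n}\mathrm{label}(D(\sigma^{n}(w)))\geq\mathrm{label}(D(w))$, since deleting the cylinder $X^{n}\sigma^{n}(w)$ can disconnect more than deleting the single vertex $w$. For the Basilica group, the regular sequence $w=11(0001)^{\omega}$ lies in $IC_1$ (so $D(w)$ is labelled $1$) yet lies in $E_2$ (so $T_w$ has two ends), while $\sigma^{2}(w)=(0001)^{\omega}\in IC_2$. That slip is repairable --- your parallel appeal to Theorem~\ref{thm_number_of_ends} is the correct substitute --- but the missing structural claim is not.

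For comparison, the paper settles the key finiteness by an elementary counting argument that needs no automaton at all. Suppose $v$ is a vertex of the connected graph $T_n$ such that $T_n\setminus v$ has $k\geq 3$ components containing post-critical vertices, inducing the partition $\pcset_1\sqcup\dots\sqcup\pcset_k$ of (a subset of) $\pcset$. For any other vertex $u$, the vertex $v$ survives in $T_n\setminus u$, every component of $T_n\setminus v$ contains a neighbour of $v$, and $u$ lies in at most one of these components; hence all parts $\pcset_i$ except possibly one are joined through $v$ in $T_n\setminus u$. Since $k\geq 3$, at least two parts get merged, so $u$ induces a partition different from that of $v$. Consequently, at every level $n$ the number of vertices inducing a partition with $\geq 3$ parts is bounded by the number of partitions of the finite set $\pcset$, uniformly in $n$; by Proposition~\ref{prop_infcomp_limit} only finitely many sequences $w\in\xo$ satisfy $\ic(T_w\setminus w)\geq 3$, and then Proposition~\ref{prop_ends_limit} forces any $w$ whose tile graph has more than two ends to have its shifts eventually cycling in this finite set, hence to be cofinal to one of finitely many periodic sequences; this gives finitely many tile graphs, and Schreier graphs follow from Proposition~\ref{prop_Schreier_construction}. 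Note that this same counting lemma is exactly what would rescue your approach: a label-$\geq 3$ component of $\A_{ic}$ with two admissible letters at some state would produce, at large levels, more vertices with $\geq 3$-part partitions than there are partitions of $\pcset$.
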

\begin{proof}
Let us prove that the graph $T_w\setminus w$ can have more than two infinite components only for finitely
many sequences $w\in\xo$. Suppose not and choose sequences $w^{(1)},\ldots,w^{(m)}$ such that
$\ic(T_{w^{(i)}}\setminus w^{(i)})\geq 3$, where we take $m$ larger than the number of partitions of the
post-critical set $\pcset$. Choose level $n$ large enough so that all words $w^{(1)}_n,\ldots,w^{(m)}_n$
are different and $\pc(T_n\setminus w_n^{(i)})\geq 3$ for all $i$ (it is possible by
Proposition~\ref{prop_infcomp_limit}). Notice that since the graph $T_n$ is connected, the deletion of
different vertices $w^{(i)}$ produces different partitions of $\pcset$. Indeed, if $\pcset=\sqcup_{i=1}^k
\pcset_i$ with $k\geq 3$ is the partition we got after removing some vertex $v$, then some $k-1$ sets
$\pcset_i$ will be in the same component of the graph $T_n\setminus u$ for any other vertex $u$ (these
$k-1$ sets will be connected through the vertex $v$). We get a contradiction with the choice of number~$m$.

It follows that there are only finitely many tile graphs with more than two ends. This also holds for
Schreier graphs by Proposition~\ref{prop_Schreier_construction}.
\end{proof}

\begin{cor}\label{cor_>2ends_pre_periodic}
The Schreier graphs $\gr_w$ and tile graphs $T_w$ can have more than two ends only for pre-periodic
sequences $w$.
\end{cor}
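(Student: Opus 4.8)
The plan is to show that more than two ends can only occur for non-regular sequences, and then to invoke the fact that non-regular sequences are automatically pre-periodic in the bounded setting. The whole argument is built on the structure of the automaton $\A_{ic}$ together with Proposition~\ref{prop_more_ends}.

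First I would establish that \emph{every} regular sequence $w$ produces a tile graph $T_w$ with one and the same number of ends. For regular $w$, every shift $\sigma^n(w)$ is again regular by Proposition~\ref{prop_properties_of_sequences}, so Proposition~\ref{prop_infcomp_A_ic} applies to each $\sigma^n(w)$ and identifies $\ic(T_{\sigma^n(w)}\setminus\sigma^n(w))$ with the label of the strongly connected component of $\A_{ic}$ that is visited infinitely often while reading $\sigma^n(w)$. Since $\A_{ic}$ is deterministic and complete and its condensation into strongly connected components is acyclic, any infinite input path is eventually trapped in a terminal component (one without outgoing arrows), so the infinitely–visited component is always terminal. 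By Lemma~\ref{lemma_properties_A_ic}(2) all terminal components carry the same label, call it $c$. Hence $\ic(T_{\sigma^n(w)}\setminus\sigma^n(w))=c$ for every $n$, and Proposition~\ref{prop_ends_limit} gives $\#Ends(T_w)=c$ for all regular $w$.

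Next I would argue that $c\leq 2$. The non-regular sequences are exactly those cofinal to a critical sequence; as there are only finitely many critical sequences, their cofinality classes are countable, so the set of non-regular sequences has measure zero and regular sequences form a set of full measure. If $c$ were $\geq 3$, then a full-measure set of sequences would yield tile graphs with more than two ends, directly contradicting Proposition~\ref{prop_more_ends}, which asserts that only finitely many $T_w$ have more than two ends. Thus $c\leq 2$, so every regular sequence gives a tile graph with at most two ends; and since $\gr_w=T_w$ for regular $w$ by Proposition~\ref{prop_Schreier_construction}(1), the same bound holds for the Schreier graph. Consequently, if $T_w$ or $\gr_w$ has more than two ends, then $w$ cannot be regular, so by definition some shift $\sigma^n(w)$ is critical, hence periodic (Proposition~\ref{prop_properties_of_sequences}), and therefore $w$ is pre-periodic, which is the claim.

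The only genuinely delicate point is the uniformity in the first step: one must be sure that the component of $\A_{ic}$ visited infinitely often along a regular sequence is necessarily terminal and that its label does not depend on which regular sequence is chosen. This is precisely what the acyclicity of the condensation together with Lemma~\ref{lemma_properties_A_ic}(2) provide, and once $c\leq 2$ is in hand the remaining deduction is purely definitional.
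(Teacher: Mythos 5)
Your argument has a genuine gap, and in fact its key intermediate claim is false. The problem is the assertion that, because the condensation of $\A_{ic}$ into strongly connected components is acyclic, ``any infinite input path is eventually trapped in a terminal component.'' What is true is that the path eventually stays inside a single strongly connected component; nothing forces that component to be one without outgoing arrows --- the input sequence may simply never contain the letters that would trigger an exit (think of a state with a loop labeled $0$ and an outgoing arrow labeled $1$, fed the word $0^{\omega}$). Consequently your conclusion that every regular sequence carries the common terminal-component label $c$, and hence that $T_w$ can have more than two ends only for non-regular $w$, is wrong. The paper itself exhibits a counterexample: for $IMG(z^2+i)$ the sequence $0^{\omega}$ is regular (it is not cofinal to the critical sequences $(10)^{\omega}$ and $(01)^{\omega}$), yet $T_{0^{\omega}}$ has three ends; the remark immediately following Corollary~\ref{cor_>2ends_pre_periodic} makes exactly this point. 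What Lemma~\ref{lemma_properties_A_ic}(2) and Proposition~\ref{prop_infcomp_A_ic} actually yield is Corollary~\ref{cor_inf_comp_almost_all_seq}: \emph{almost all} sequences carry the terminal label --- not all regular ones, and ``almost all'' is not enough for your step 3.

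The corollary must therefore be proved along a different line, which is what the paper does: if $\#Ends(T_w)>2$, then by Proposition~\ref{prop_ends_limit} one has $\ic(T_{\sigma^n(w)}\setminus\sigma^n(w))\geq 3$ for all large $n$; but the proof of Proposition~\ref{prop_more_ends} shows that only finitely many sequences $u\in\xo$ satisfy $\ic(T_u\setminus u)\geq 3$. Hence the shifts $\sigma^n(w)$ for large $n$ range over a finite set, so $\sigma^n(w)=\sigma^m(w)$ for some $n<m$; thus $\sigma^n(w)$ is periodic and $w$ is pre-periodic. This argument never needs to decide whether $w$ is regular, which is precisely why it survives the $IMG(z^2+i)$ example. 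Your final deduction (non-regular $\Rightarrow$ some shift is critical $\Rightarrow$ periodic $\Rightarrow$ $w$ pre-periodic) is correct as far as it goes, but it covers only part of the cases; the regular sequences whose graphs have many ends are exactly the ones your proof cannot handle.
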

\begin{proof}
Since the graph $T_w\setminus w$ can have more than two infinite components only for finitely many
sequences $w$, we get that, in the limit in Proposition~\ref{prop_ends_limit}, the sequence $\sigma^n(w)$
attains a finite number of values. Hence $w$ is pre-periodic.
\end{proof}

Example with $IMG(z^2+i)$ in Section~\ref{Section_Examples} shows that the Schreier graph $\gr_w$ and the
tile graph $T_w$ may have more than two ends even for regular sequences $w$.

\begin{cor}\label{cor_one_or_two_ends_a_s}
The tile graphs $T_w$ and Schreier graphs $\gr_w$ have the same number of ends for almost all sequences $w\in\xo$, and this number is equal to one or two.
\end{cor}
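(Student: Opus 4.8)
The plan is to reduce the whole statement to tile graphs, where the preceding machinery already does almost all the work, and then to assemble three ingredients: the regular/critical dichotomy (Corollary~\ref{remarktile}), the almost-sure constancy of the number of infinite components (Corollary~\ref{cor_inf_comp_almost_all_seq}), and the finiteness statement (Proposition~\ref{prop_more_ends}). First I would recall that the set of non-regular sequences has measure zero: there are only finitely many critical sequences, and being cofinal to one of them is a measure-zero event. For a regular $w$, item~1 of Corollary~\ref{remarktile} gives $\#Ends(\gr_w)=\#Ends(T_w)$. This already settles the claim that $\gr_w$ and $T_w$ have the same number of ends for almost every $w$, so it only remains to evaluate $\#Ends(T_w)$ almost surely.

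Next I would set $N$ to be the common label of the strongly connected components of $\A_{ic}$ without outgoing arrows, and let $A=\{v\in\xo : \ic(T_v\setminus v)=N\}$, so that $\mu(A)=1$ by Corollary~\ref{cor_inf_comp_almost_all_seq}. Since the shift $\sigma$ preserves the uniform measure, each $\sigma^{-n}(A)$ has full measure, and therefore so does the countable intersection $A'=\bigcap_{n\ge 0}\sigma^{-n}(A)$. For every $w\in A'$ the sequence $n\mapsto \ic(T_{\sigma^n(w)}\setminus\sigma^n(w))$ is identically equal to $N$, so Proposition~\ref{prop_ends_limit} yields $\#Ends(T_w)=N$. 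Thus the number of ends of $T_w$, and hence of $\gr_w$, equals $N$ for almost every $w$.

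Finally I would rule out $N\ge 3$. The set $A'$ has full measure and is therefore uncountable, yet every $w\in A'$ satisfies $\#Ends(T_w)=N$; if $N\ge 3$ this would exhibit infinitely many tile graphs with more than two ends, contradicting Proposition~\ref{prop_more_ends}. On the other hand each $T_w$ is an infinite connected locally finite graph, hence has at least one end, so $N\ge 1$; equivalently, $\ic(T_w\setminus w)\ge 1$ always. We conclude $N\in\{1,2\}$. The only point that requires genuine care is upgrading the almost-sure equality $\ic(T_w\setminus w)=N$ to hold simultaneously for all shifts $\sigma^n(w)$, which is exactly what the measure-preservation of $\sigma$ and the countable-intersection argument supply; everything else is a direct assembly of the results established above.
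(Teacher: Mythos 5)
Your proof is correct and follows essentially the same route as the paper's own (implicit) argument: the paper likewise obtains the corollary by combining Corollary~\ref{cor_inf_comp_almost_all_seq} with Proposition~\ref{prop_ends_limit} for the almost-sure constancy, identifies Schreier graphs with tile graphs on the full-measure set of regular sequences via Corollary~\ref{remarktile}, and excludes three or more ends by Proposition~\ref{prop_more_ends}. Your explicit countable-intersection argument using shift-invariance of the uniform measure is precisely the detail the paper leaves unstated (it could alternatively be extracted from Remark~\ref{rem_subword_and_inf_comp}), and your handling of it is sound.
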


\subsection{Two ends almost surely}\label{section_two_ends_a_s}

In this section we describe bounded automata for which Schreier graphs $\Gamma_w$ and tile graphs $T_w$
have almost surely two ends. Notice that in this case the post-critical set $\pcset$ cannot consist of one
element (actually, every finitely generated self-similar group with $|\pcset|=1$ is finite and cannot act
transitively on $X^n$ for all $n$).

\begin{lemma}\label{lemma_as_two_ends_|P|=2}
If the Schreier graphs $\Gamma_w$ (equivalently, the tile graphs $T_w$) have two ends for almost all
$w\in\xo$, then $|\pcset|=2$.
\end{lemma}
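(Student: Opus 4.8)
The plan is to start from the almost-sure characterization of the number of ends and then show that having exactly two ends forces the stable partition of $\pcset$ to consist of two singletons. By Corollary~\ref{cor_inf_comp_almost_all_seq} together with Proposition~\ref{prop_ends_limit}, the almost-sure number of ends equals the common label of the strongly connected components of $\A_{ic}$ without outgoing arrows, and this label is precisely the number of parts of the partition $\pcset=\pcset_1\sqcup\pcset_2$ of the post-critical set induced, for all large $n$, by the vertex $w_n$ of a generic regular sequence $w$. Since each of the two surviving components is infinite, both parts are nonempty, so $|\pcset|\geq 2$; the whole content of the lemma is therefore the reverse inequality $|\pcset|\leq 2$.

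To set up a contradiction, I fix, using Remark~\ref{rem_subword_and_inf_comp}, a regular sequence $w$ whose tail stays in a terminal strongly connected component of $\A_{ic}$, so that $\pc(T_n\setminus w_n)=2$ for all large $n$, with $T_n\setminus w_n$ having exactly two components $C_1^{(n)}\supseteq\pcset_1$ and $C_2^{(n)}\supseteq\pcset_2$ carrying post-critical vertices. Assume $|\pcset|\geq 3$; then without loss of generality $|\pcset_1|\geq 2$, and I pick $p\neq q$ in $\pcset_1$, which correspond to distinct vertices $p_n\neq q_n$ for all large $n$ (as recorded in Section~\ref{section_inflation}).

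The main geometric step is to analyze how these two post-critical vertices behave under the inflation of Theorem~\ref{th_tile_graph_construction}, where $T_{n+1}$ is assembled from $|X|$ copies of $T_n$ glued to one another only along post-critical vertices according to the edge set $E$. Two-endedness means $T_w$ is quasi-isometric to $\mathbb{Z}$, hence has linear growth, which forces the successive copies to be concatenated essentially \emph{end to end}, so that the assembled graph does not branch. An interior post-critical vertex of a copy that still carries a gluing edge of $E$ would attach an entire further growing copy transversally; removing the corresponding junction vertex would then leave a \emph{third} infinite component, so that $\ic(T_{\sigma^m(w)}\setminus\sigma^m(w))\geq 3$ on a set of positive measure, contradicting Corollary~\ref{cor_inf_comp_almost_all_seq} (via the eventual monotonicity of $\pc$ in Proposition~\ref{prop_infcomp_limit}). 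Consequently, in the terminal regime every active post-critical vertex must sit at one of the at most two ``extremities'' used for the concatenation; since distinct post-critical sequences give distinct vertices for large $n$, there can be at most two of them, i.e.\ $|\pcset|\leq 2$, completing the argument.

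The hard part will be to make the coarse ``line-like versus branching'' dichotomy rigorous at the level of the finite model graph $M_{P,x}$ of Lemma~\ref{lemma construction Aic}, and in particular to rule out \emph{inert} post-critical vertices that carry no edge of $E$ and would be counted in the partition without forcing a branch. I would dispose of this by showing, from the standing connectivity of all $T_n$ and the basic form of $S$, that every post-critical sequence $p\in\pcset$ produces a genuine escape $p_n\sigma^n(w)\to\infty$ inside its component (equivalently, that each $p_n$ is incident to an edge of $E$ at infinitely many levels, using that the finitary restrictions eventually reach the trivial state as in Proposition~\ref{prop_properties_of_sequences}); translating the quasi-isometry-to-$\mathbb{Z}$ intuition into this finite combinatorial statement is where the real work lies.
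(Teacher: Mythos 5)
Your starting point coincides with the paper's: almost-sure two-endedness forces the terminal strongly connected components of $\A_{ic}$ to be labelled by partitions of $\pcset$ into two nonempty parts, so the content of the lemma is the bound $|\pcset|\leq 2$. From there, however, your argument has a genuine gap --- in fact two. First, the bridge you lean on, namely that ``two-endedness means $T_w$ is quasi-isometric to $\mathbb{Z}$, hence has linear growth, which forces the successive copies to be concatenated end to end,'' is false for general graphs: a graph with two ends need not be quasi-isometric to a line (attach larger and larger finite graphs along $\mathbb{Z}$; the ends are unchanged but the growth is no longer linear). Two ends implies virtually cyclic only for groups and their Cayley graphs, and the tile graphs $T_w$ are not vertex-transitive. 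Indeed, the statement that in the two-ended case the graphs really are lines or rays is a \emph{conclusion} of the paper (Theorem~\ref{th_classification_two_ends}), not something available as an input here. Second, even granting a line-like picture, your claim that an interior post-critical vertex carrying a gluing edge ``attaches a further growing copy transversally,'' so that deleting the junction leaves a third infinite component, does not follow: in the inflation of Theorem~\ref{th_tile_graph_construction} a copy of $T_n$ may be glued to the rest at two or more post-critical vertices, producing a cycle rather than a branch (this is exactly what happens for the Basilica group), and then no third component appears after deleting one junction vertex.

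Moreover, the two items you explicitly defer --- making the branching dichotomy rigorous at the level of the finite model graph, and ruling out inert post-critical vertices carrying no edge of $E$ --- are not loose ends; they are essentially the whole proof. The paper's argument never invokes quasi-isometry. It first normalizes by passing to a power of the alphabet so that distinct post-critical sequences end in distinct letters \emph{and} every nontrivial $s\in S$ moves some letter $x$ with $s|_x=1$; the latter guarantees that every $p\in\pcset$ occurs in an edge of the model graph, and it is this normalization (not Proposition~\ref{prop_properties_of_sequences}, as you suggest) that disposes of the inert-vertex problem. It then studies the auxiliary graph $\overline{M}$, obtained from the model graph by turning each column $\pcset\times\{x\}$ into a clique, and proves combinatorially --- using only that no state of $\A_{ic}$ is a one-part partition and that paths issuing from a two-part partition lead only to two-part partitions --- that $\overline{M}$ is an interval whose two endpoints are the only post-critical vertices; the equality $|\pcset|=2$ drops out of that structure. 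So your proposal correctly locates where the difficulty sits, but it does not overcome it, and the geometric shortcut offered in place of the combinatorial analysis is invalid.
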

\begin{proof}
We pass to a power of the alphabet so that every post-critical sequence is of the form $y^{-\omega}$ or
$y^{-\omega}x$ for some letters $x,y\in X$ and different post-critical sequences end with different letters.
In particular, every subset $\pcset\times\{x\}$ for $x\in X$ of the model graph $M$ contains at most one
post-critical vertex of $M$.

We again pass to a power of the alphabet so that for every nontrivial element $s\in S$ there exists a
letter $x\in X$ such that $s(x)\neq x$ and $s|_x=1$. Then every post-critical sequence $p\in\pcset$ appears
in some edge $\{(p,*), (*,*)\}$ of the model graph. Indeed, if the pair $p|q$ is read along a left-infinite
path in the automaton $S\setminus\{1\}$ that ends in a nontrivial state $s$, then the pair
$\{(p,x),(q,s(x))\}$ belongs to the edge set $E$ of the graph $M$.

Now suppose that tile graphs have almost surely two ends. Then the strongly connected components without outgoing
arrows in the automaton $\A_{ic}$ correspond to the partitions of the post-critical set $\pcset$ on two
parts (see Corollary \ref{cor_inf_comp_almost_all_seq}). In particular, there is no state corresponding to the partition of $\pcset$ with one part, because
such a partition would form a strongly connected component without outgoing arrows (see the construction of
$\A_{ic}$). We will use the fact that all paths in the automaton $\A_{ic}$ starting at any partition
$\pcset=\pcset_1\sqcup \pcset_2$ end in partitions of $\pcset$ on two parts (we cannot get more parts).

Let us construct an auxiliary graph $\overline{M}$ as follows: take the model graph $M$ and for each $x\in
X$ add edges between all vertices in the subset $\pcset\times \{x\}$. We will prove that the graph
$\overline{M}$ is an ``interval'', i.e., there are two vertices of degree one and the other vertices have
degree two, and that two end vertices of $\overline{M}$ are the only post-critical vertices. First, let us
show that there are only two subsets $\pcset \times \{x\}$ for $x\in X$ such that the graph
$\overline{M}\setminus \pcset\times \{x\}$ is connected. Suppose that there are three such subsets
$\pcset\times \{x\}$, $\pcset\times \{y\}$, $\pcset\times \{z\}$.

Fix any partition $\pcset=\pcset_1\sqcup \pcset_2$
that corresponds to some state of the automaton $\A_{ic}$. Consider the arrow in the automaton $\A_{ic}$
starting at $\pcset_1\sqcup \pcset_2$ and labeled by $x$. This arrow ends in the partition
$\pcset=\pcset_1^{(x)}\sqcup \pcset_2^{(x)}$ with two parts. Recall how we construct the partition
$\pcset_1^{(x)}\sqcup \pcset_2^{(x)}$ using the graph $M_{\pcset_1\sqcup \pcset_2,x}$, and notice that
$M_{\pcset_1\sqcup \pcset_2,x}\setminus\pcset\times \{x\}$ coincides with $\overline{M}\setminus \pcset\times \{x\}$. Using the
assumption that the graph $\overline{M}\setminus \pcset\times \{x\}$ is connected, we get that one of the sets
$\pcset_i^{(x)}$ is a subset of $\pcset\times \{x\}$. Since $\pcset\times \{x\}$ contains at most one post-critical
vertex, the part $\pcset_i^{(x)}$ consists of precisely one element (post-critical vertex), which we denote
by $a\in\pcset$, i.e., here $\pcset_i^{(x)}=\{a\}$. By the same reason the subsets $\pcset\times \{y\}$ and
$\pcset\times \{z\}$ also contain some post-critical vertices $b$ and $c$. Notice that the last letters of the
sequences $a, b, c$ are $x,y,z$ respectively. We can suppose that the sequences $az$ and $bz$ are different
from the sequence $c$ (over three post-critical sequences there are always two with this property).
Consider the arrow in the automaton $\A_{ic}$ starting at the partition $\pcset_1^{(x)}\sqcup
\pcset_2^{(x)}=\{a\}\sqcup \pcset\setminus \{a\}$ and labeled by $z$. This arrow should end in the
partition of $\pcset$ on two parts. Since $az$ and $c$ are different, the post-critical vertex $c$ of
$M_{\pcset_1\sqcup \pcset_2,x}$ belongs to the subset $(\pcset\setminus \{a\})\times \{z\}$. Further, since $c$ is the unique
post-critical vertex in $\pcset\times \{z\}$, there should be no edges connecting the subset $(\pcset\setminus
\{a\})\times \{z\}$ with its outside in the graph $M_{\pcset_1\sqcup \pcset_2,x}$ (otherwise all post-critical vertices will be
in the same component). Hence the only edges of the graph $\overline{M}$ going outside the subset
$\pcset\times \{z\}$ should be at the vertex $(a,z)$. Applying the same arguments to the partition $\{b\}\sqcup
\pcset\setminus\{b\}$, we get that this unique vertex should be $(b,z)$. Hence $a=b$ and we get a
contradiction.

So let $\pcset\times \{x\}$ and $\pcset\times \{y\}$ be the two subsets such that their complements in the graph
$\overline{M}$ are connected. Let $a$ and $b$ be the post-critical vertices in $\pcset\times \{x\}$ and
$\pcset\times \{y\}$ respectively. By the same arguments as above, the subset $\pcset\times \{x\}$ has a unique
vertex which is adjacent to a vertex from $\overline{M}\setminus \pcset\times \{x\}$, and this vertex is of the
form $(a,x)$ or $(b,x)$. The same holds for the subset $\pcset\times \{y\}$. Every other component
$\pcset\times \{z\}$ contains precisely two vertices $(a,z)$ and $(b,z)$, which have edges going outside the
component $\pcset\times \{z\}$. However every post-critical sequence appears in one of such edges (see our
assumption in the second paragraph of the proof). Hence the post-critical set contains precisely two
elements and the structure of the graph $\overline{M}$ follows.
\end{proof}

\begin{cor}\label{cor_one_end_pcset_3_points}
If the post-critical set $\pcset$ contains at least three sequences, then the Schreier graphs $\Gamma_w$
and tile graphs $T_w$ have almost surely one end.
\end{cor}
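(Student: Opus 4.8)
The plan is to derive this corollary directly from the two immediately preceding results, namely Corollary~\ref{cor_one_or_two_ends_a_s} and Lemma~\ref{lemma_as_two_ends_|P|=2}, by a simple contrapositive argument. No new construction is needed; the entire content has already been extracted in the proof of Lemma~\ref{lemma_as_two_ends_|P|=2}, which analyzed the auxiliary graph $\overline{M}$ and showed that an ``almost surely two-ended'' scenario forces the post-critical set to have exactly two elements.

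First I would invoke Corollary~\ref{cor_one_or_two_ends_a_s}, which guarantees that the tile graphs $T_w$ and the Schreier graphs $\gr_w$ share the same number of ends for almost all $w\in\xo$, and that this common almost-sure value is either one or two. Thus the dichotomy reduces to excluding the value two. Next I would apply the contrapositive of Lemma~\ref{lemma_as_two_ends_|P|=2}: that lemma asserts that if the graphs have two ends almost surely, then $|\pcset|=2$. Since we are assuming $|\pcset|\geq 3$, the almost-sure number of ends cannot be two. Combining this with the dichotomy from Corollary~\ref{cor_one_or_two_ends_a_s}, the only remaining possibility is that the almost-sure number of ends equals one, which is exactly the claim.

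There is essentially no obstacle in this proof, as it is a routine logical combination of the established results; the only point requiring a moment of care is to confirm that Corollary~\ref{cor_one_or_two_ends_a_s} already rules out values other than one or two, so that eliminating the value two genuinely pins down the answer to one. The substantive work — the structural analysis of $\overline{M}$ as an ``interval'' with precisely two post-critical endpoints — was carried out in Lemma~\ref{lemma_as_two_ends_|P|=2}, and the present statement is merely its clean reformulation in terms of the cardinality hypothesis $|\pcset|\geq 3$.
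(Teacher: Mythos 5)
Your proof is correct and matches the paper's approach exactly: the paper states this corollary without a separate proof precisely because it is the contrapositive of Lemma~\ref{lemma_as_two_ends_|P|=2} combined with the one-or-two dichotomy of Corollary~\ref{cor_one_or_two_ends_a_s}. Nothing is missing.
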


The following example shows that almost all Schreier graphs may have two ends for a contracting group
generated by a non-bounded automaton, i.e., by an automaton with infinite post-critical set.

\begin{example}
Consider the self-similar group $G$ over $X=\{0,1,2\}$ generated by the transformation $a$, which is given by the recursion  $a=(a^2,1,a^{-1})(0,1,2)$
(see Example~7.6 in \cite{bhn:aut_til}). The group $G$ is self-replicating and contracting with nucleus
$\nucl=\{1,a^{\pm 1}, a^{\pm 2}\}$, but the generating automaton is not bounded and the post-critical set
is infinite. Every Schreier graph $\gr_w$ with respect to the generating set $\{a,a^{-1}\}$ is a line and
has two ends.
\end{example}

\begin{theor}\label{th_classification_two_ends}
Almost all Schreier graphs $\gr_w$ (equivalently, tile graphs $T_w$) have two ends if and only if the
automaton $S$ brought to the basic form (see Section~\ref{subsection_BoundedAutomata}) is one of the
following.
\begin{enumerate}
  \item The automaton $S$ consists of the adding machine, its inverse, and the trivial element, where the adding machine is an
  element of type  I  with a transitive action
on $X$ (see Figure~\ref{fig_Bounded_PCS2}, where all edges not shown
in the figure go to the identity state, and the letters $x$ and $y$ are different).

  \item There exists an order on the alphabet $X=\{x=x_1, x_2, \ldots, x_m=y\}$ such that one of
  the following cases holds.
  \begin{enumerate}
    \item The automaton $S$ consists of elements of types II and II$'$ (see Figure~\ref{fig_Bounded_PCS2});
    every pair $\{x_{2i},x_{2i+1}\}$ is an orbit of the action of some element of type II and
    all nontrivial orbits of such elements on $X$ are of this form; also every
pair $\{x_{2i-1},x_{2i}\}$ is an orbit of the action of some element of type II$'$ and all nontrivial
orbits of such elements on $X$ are of this form (in particular, $|X|$ is an odd number).

\begin{figure}
\begin{center}
\psfrag{xy}{$x|y$} \psfrag{yy}{$y|y$} \psfrag{xx}{$x|x$}  \psfrag{yx}{$y|x$} \psfrag{TI}{Type I}
\psfrag{TI2}{Type I$'$} \psfrag{TII}{Type II} \psfrag{TII2}{Type II$'$} \psfrag{TIII}{Type III}
\psfrag{TIII2}{Type III$'$} \epsfig{file=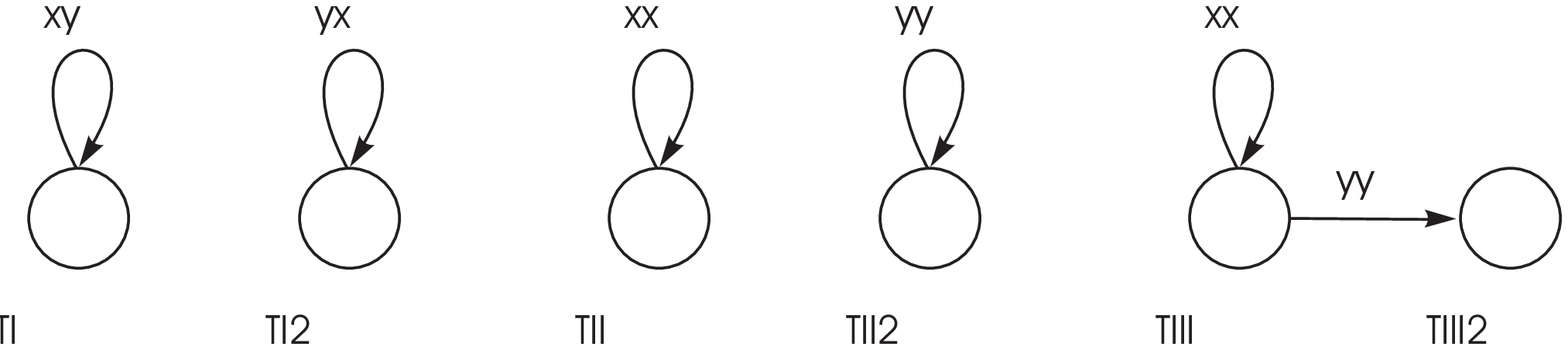,height=80pt}
\caption{Bounded automata with
$|\pcset|=2$}
\label{fig_Bounded_PCS2}
\end{center}
\end{figure}

    \item The automaton $S$ consists of elements of types II, III, and III$'$; every pair $\{x_{2i},x_{2i+1}\}$ is an
    orbit of the action of some element of type II or $III$ and all nontrivial orbits of such elements on $X$
    are of this form; also every pair $\{x_{2i-1},x_{2i}\}$ is an orbit of the action of some element of type III$'$ and
all nontrivial orbits of such elements on $X$ are of this form (in particular, $|X|$ is an even number).
  \end{enumerate}
\end{enumerate}
Moreover, in this case, all Schreier graphs $\gr_w$ are lines except for two Schreier graphs $\gr_{x^{\omega}}$ and $\gr_{y^{\omega}}$ in Case 2 (a), and one Schreier graph $\gr_{x^{\omega}}$ in Case 2 (b), which are rays.
\end{theor}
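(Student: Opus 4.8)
The plan is to prove the equivalence by treating sufficiency and necessity separately, with essentially all of the difficulty in the necessity direction, for which the key reduction is already supplied by Lemma~\ref{lemma_as_two_ends_|P|=2}. Assuming almost every $T_w$ has two ends, that lemma (after passing to a power of the alphabet so that $S$ is in basic form) gives $|\pcset|=2$ and tells us that the auxiliary graph $\overline{M}$---the model graph $M$ with each fiber $\pcset\times\{x\}$ completed to a clique---is an interval whose two degree-one endpoints are exactly the post-critical vertices $a,b\in\pcset$. I would first record the shape of post-critical sequences in basic form: going leftward, a left-infinite path in $S\setminus\{1\}$ must settle into a loop at a circuit state (cycles are loops), and it can leave that loop through at most one edge into a \emph{finitary} state, since finitary states send every letter to the trivial state; hence each post-critical sequence is $x^{-\omega}$ or $x^{-\omega}z$ and is carried by a single circuit state. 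Because $S=S^{-1}$, these circuit states occur together with their inverses.

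The core of the argument is to classify the circuit states from the constraints that $\pcset=\{a,b\}$ and that $\overline{M}$ is an interval. I would split according to whether some generator acts transitively on $X$ or every nontrivial generator is a product of transpositions. In the transitive case, having exactly two post-critical sequences forces the circuit state to have trivial restriction off its loop, which is precisely the adding-machine recursion, and connectedness of the $T_n$ forces a single transitive cycle; together with its inverse this is Case~1. In the remaining case I would read the unique edge of $M$ joining consecutive fibers along the interval from $a$ to $b$ (the interval itself induces the order $X=\{x_1,\dots,x_m\}$): each such edge records which letter a generator moves and to where, and keeping $\overline{M}$ an interval (no branch vertex) while admitting no third post-critical sequence forces every nontrivial generator to be a product of transpositions of \emph{consecutive} letters, realized with the two offsets $\{x_{2i},x_{2i+1}\}$ and $\{x_{2i-1},x_{2i}\}$. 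Matching the loop labels against the shapes $x|x,\,x|y,\,y|y,\,y|x$ of Figure~\ref{fig_Bounded_PCS2} then separates Types~II/II$'$ from II/III/III$'$ and produces the parity constraints on $|X|$ in Cases~2(a) and 2(b).

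I expect the main obstacle to be exactly this last translation---converting the purely combinatorial statement ``$\overline{M}$ is an interval with post-critical endpoints'' into explicit generators with a prescribed alphabet order and orbit pattern. The delicate bookkeeping is: (i) treating involutive generators ($s=s^{-1}$, the transposition types) and genuine inverse pairs (the adding machine and $a^{-1}$) on the same footing via $S=S^{-1}$; (ii) ruling out any extra edge of $M$ that would create a degree-$3$ vertex in $\overline{M}$ or a third element of $\pcset$; and (iii) checking that the resulting local rules can be realized consistently across the whole ordered alphabet, which is what pins down the parity of $|X|$ in the two subcases.

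For sufficiency I would verify each listed automaton directly: in every case the model graph yields an interval $\overline{M}$, so by the inflation construction of Theorem~\ref{th_tile_graph_construction} the finite tile graphs $T_n$ are paths, their limits $T_w$ are lines, and hence have two ends; by Corollary~\ref{cor_inf_comp_almost_all_seq} two is the almost-sure value. For the ``lines versus rays'' addendum I would locate the critical sequences, which in basic form are of the form $x^{\omega}$: a critical sequence is a fixed endpoint of the inflation, so $T_{x^{\omega}}$ (and $T_{y^{\omega}}$ in Case~2(a)) is a one-ended ray while every regular $T_w$ is a two-ended line. Passing from tile graphs to Schreier graphs via Corollary~\ref{remarktile} then gives the stated description of the $\gr_w$.
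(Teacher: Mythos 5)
Your plan follows the paper's proof quite closely in outline: the paper also takes Lemma~\ref{lemma_as_two_ends_|P|=2} (that $|\pcset|=2$ and that $\overline{M}$ is an interval whose two endpoints are the post-critical vertices) as the decisive input, classifies the circuit states of a basic-form automaton with $|\pcset|=2$ into the six types of Figure~\ref{fig_Bounded_PCS2}, reads off the generating set from the edges of $\overline{M}$, and settles the converse by direct inspection. Your case split (some generator transitive on $X$ versus none) is a reorganization of the paper's split on whether both post-critical sequences are periodic, and amounts to the same analysis.

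There is, however, one step where the criteria you state would fail. In your ``remaining'' case you claim that keeping $\overline{M}$ free of branch vertices, while admitting no third post-critical sequence, forces every nontrivial generator to be a product of transpositions of consecutive letters. This does not exclude a \emph{non-transitive} element of type I: take $|X|\geq 3$ and a type I element $a$ acting on $X$ as the single transposition exchanging $x$ and $y$ and fixing every other letter $z$. It creates no vertex of degree three, no cycle, and no third element of $\pcset$: its diagonal edges together with the fiber edges form a path on the fibers of $x$ and $y$, while each fixed letter $z$ contributes only the pair $\{(x^{-\omega},z),(y^{-\omega},z)\}$, which is already a fiber edge of $\overline{M}$. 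What actually kills this configuration is connectedness: the fibers of the fixed letters can never be joined to that path, since row edges of type II or II$'$ elements avoid the path's endpoints $(x^{-\omega},x)$, $(y^{-\omega},y)$ and would create a degree-three vertex at its interior, and any further type I element must agree with $a$ for the same reason. Hence $\overline{M}$ (equivalently, every tile graph $T_n$) would be disconnected, violating the standing assumptions. The paper disposes of this point by a different, non-graph-theoretic argument: if a type I element has a fixed point $a(z)=z$, then in the automaton $\A_{ic}$ the arrow labeled $z$ out of the two-part partition $\{x^{-\omega}\}\sqcup\{y^{-\omega}\}$ leads to a one-part partition, contradicting almost-sure two-endedness (Corollary~\ref{cor_inf_comp_almost_all_seq}). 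Either patch completes your argument, but one of them is needed; intervalhood must be used in full (connected and acyclic, not merely branch-free), and neither your sketch nor your bookkeeping items contains that step. A smaller slip: in Case~1 both $T_{x^{\omega}}$ and $T_{y^{\omega}}$ are also rays; all Schreier graphs are nevertheless lines there because $a(x^{\omega})=y^{\omega}$, so the two rays are glued inside $\gr_{x^{\omega}}$, which is exactly what Corollary~\ref{remarktile} with $O(x^{\omega})=\{x^{\omega},y^{\omega}\}$ yields.
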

\begin{proof}
Recall the definition of the basic form of a bounded automaton from
Section~\ref{subsection_BoundedAutomata}. If a bounded automaton is in the basic form, its post-critical
set has size two, and every state has an incoming arrow, then it is not hard to see that the automaton can
contain only the states of six types shown in Figure~\ref{fig_Bounded_PCS2}.

We will be using the fact proved in Lemma~\ref{lemma_as_two_ends_|P|=2} that the modified model graph
$\overline{M}$ is an interval. There are two cases that we need to treat a little bit differently depending
on whether both post-critical sequences are periodic or not.

Consider the case when both post-critical sequences are periodic, here $\pcset=\{x^{-\omega},
y^{-\omega}\}$. In this case the automaton $S$ can contain only the states of types I, I$'$, II, and II$'$.
Suppose there is a state $a$ of type I. It contributes the edges $\{ (x^{-\omega},z),(y^{-\omega},a(z)) \}$
to the graph $\overline{M}$ for every $z\in X$. If there exists a nontrivial orbit of the action of $a$ on
$X$, which does not contain $x$, then it contributes a cycle to the graph $\overline{M}$. If there exists a
fixed point $a(z)=z$, then under construction of the automaton $\A_{ic}$ starting at the partition
$\pcset=\{x^{-\omega}\}\sqcup\{y^{-\omega}\}$ and following the arrow labeled by $z$ we get a partition
with one part. Hence the element $a$ should act transitively on $X$ (it is the adding machine). Every other
element of type I should have the same action on $X$, and hence coincide with $a$, otherwise we would got a
vertex in the graph $\overline{M}$ of degree $\geq 3$. Every element $b$ of type I$'$ contributes the edges
$\{(x^{-\omega},b(z)),(y^{-\omega},z) \}$ to the graph $\overline{M}$. It follows that the action of $b$ on
$X$ is the inverse of the action of $a$ (otherwise we would got a vertex of $\overline{M}$ of degree $\geq
3$), and hence $b$ is the inverse of $a$. If the automaton $S$ additionally contains a state of type II or
II$'$, then there is an edge $\{(x^{-\omega},z_1),(x^{-\omega},z_2)\}$ or
$\{(y^{-\omega},z_1),(y^{-\omega},z_2)\}$ in the graph $\overline{M}$ for some different letters
$z_1,z_2\in X$. We get a vertex of degree $\geq 3$, contradiction. Hence, in this case, the automaton $S$
consists of the adding machine, its inverse, and the identity state.

Suppose $S$ does not contain states of types I and I$'$. Since the post-critical set is equal to
$\pcset=\{x^{-\omega}, y^{-\omega}\}$ the automaton $S$ contains states $a$ and $b$ of types II and II$'$
respectively. These elements contribute edges $\{(x^{-\omega},z),(x^{-\omega},a(z))\}$ and
$\{(y^{-\omega},z),(y^{-\omega},b(z))\}$ to the graph $\overline{M}$. Since the graph $\overline{M}$ should
be an interval, these edges should consequently connect all components $\pcset\times z$ for $z\in X$ (see
Figure~\ref{fig_ModelLine_Case 2a}). It follows that there exists an order on the alphabet such that item
$(a)$ holds.

\begin{figure}
\begin{center}
\psfrag{1}{$x^{-\omega}$} \psfrag{2}{$y^{-\omega}x$} \psfrag{3}{$y^{-\omega}x_2$}
\psfrag{4}{$x^{-\omega}x_2$} \psfrag{5}{$x^{-\omega}x_3$} \psfrag{6}{$y^{-\omega}x_3$}
\psfrag{7}{$x^{-\omega}y$} \psfrag{8}{$y^{-\omega}$}
 \epsfig{file=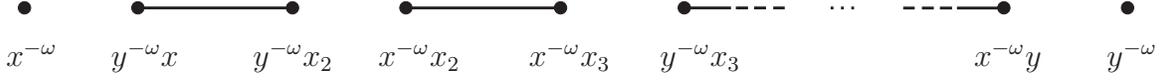,height=26pt}
\caption{Case 2 (a)}
\label{fig_ModelLine_Case 2a}
\end{center}
\end{figure}

\begin{figure}
\begin{center}
\psfrag{1}{$x^{-\omega}$} \psfrag{2}{$x^{-\omega}yx$} \psfrag{3}{$x^{-\omega}yx_2$}
\psfrag{4}{$x^{-\omega}x_2$} \psfrag{5}{$x^{-\omega}x_3$} \psfrag{6}{$x^{-\omega}yx_3$}
\psfrag{7}{$x^{-\omega}yy$} \psfrag{8}{$x^{-\omega}y$}
\epsfig{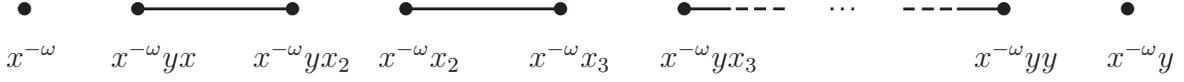}
\caption{Case 2 (b)}
\label{fig_ModelLine_Case 2b}
\end{center}
\end{figure}

Consider the case $\pcset=\{x^{-\omega}, x^{-\omega}y\}$. In this case the automaton $S$ can consist only
of states of types II, III and III$'$. Each state of type II or III contributes edges
$\{(x^{-\omega},z),(x^{-\omega},a(z))\}$ to the graph $\overline{M}$. Each state of type III$'$ contributes
edges $\{(x^{-\omega}y,z),(x^{-\omega}y,b(z))\}$. These edges should consequently connect all components
$\pcset\times z$ for $z\in X$ (see Figure~\ref{fig_ModelLine_Case 2b}). It follows that there exists an
order on the alphabet such that item $(b)$ holds.

For the converse, one can directly check the following facts. In item 1, every Schreier graph $\gr_w(G,S)$
is a line. In Case 2 (a), the Schreier graphs $\gr_{x^{\omega}}$ and $\gr_{y^{\omega}}$ are rays, while all
the other Schreier graphs are lines. In Case 2 (b), the Schreier graph $\gr_{x^{\omega}}$ is a ray, while
all the other Schreier graphs are lines.
\end{proof}

\begin{example}
Dihedral group
\end{example}

\begin{example}
The Grigorchuk group is a nontrivial example satisfying the conditions of the theorem. It is generated by
the automaton $S$ shown in Figure~\ref{fig_Grigorchuk_Aut}. After passing to the alphabet
$\{0,1\}^3\leftrightarrow X=\{0,1,2,3,4,5,6,7\}$, the automaton $S$ consists of the trivial state and the elements $a,b,c,d$, which are given by the following recursions:
\begin{eqnarray*}
a&=&(1,1,1,1,1,1,1,1)(0,4)(1,5)(2,6)(3,7)\\
b&=&(1,1,1,1,1,1,1,b)(0,2)(1,3)(4,5)\\
c&=&(1,1,1,1,1,1,a,c)(0,2)(1,3)\\
d&=&(1,1,1,1,1,1,a,d)(4,5)
\end{eqnarray*}
We see that this automaton satisfies Case 2 (b) of the theorem when we choose the order $6,2,0,4,5,1,3,7$
on $X$. The Schreier graph $\gr_{7^{\omega}}$ is a ray, while the other orbital Schreier graphs are lines.
\end{example}

\begin{figure}
\begin{center}
\psfrag{id}{$id$} \psfrag{2}{$a$} \psfrag{3}{$b$} \psfrag{4}{$d$} \psfrag{5}{$c$} \psfrag{6}{$0|1$}
\psfrag{7}{$1|0$} \psfrag{8}{$0|0$} \psfrag{9}{$0|0$} \psfrag{10}{$1|1$} \psfrag{11}{$1|1$}
\psfrag{12}{$1|1$} \psfrag{13}{$0|0$} \psfrag{14}{$0|0$} \psfrag{15}{$1|1$}
\epsfig{file=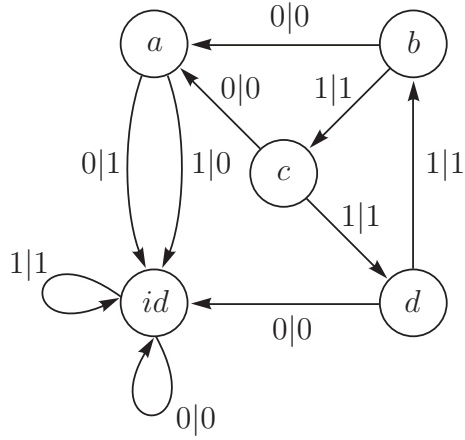,height=160pt}
\caption{The generating automaton of the Grigorchuk group}
\label{fig_Grigorchuk_Aut}
\end{center}
\end{figure}

In what follows below, we give an algebraic characterization of the automaton groups acting on the binary tree, whose orbital Schreier graphs have two ends. It turns out that such groups are those whose nuclei are given by the automata defined by  \v{S}uni\'{c} in \cite{Zoran}. In order to show this correspondence we sketch the construction of the groups $G_{\omega, \rho}$ as in \cite{Zoran}.

Let $A$ and $B$ be the abelian groups $\mathbb{Z}/2\mathbb{Z}$ and $(\mathbb{Z}/2\mathbb{Z})^k$ respectively. We think of $A$ as the field of $2$ elements and of $B$ as the $k$-dimensional vector space over this field. Let $\rho:B\to B$ be an automorphism of $B$ and $\omega:B\to A$ a surjective homomorphism. We define the action of elements of $A$ and $B$ on the binary tree $\{0,1\}^{*}$ as follows: the nontrivial element $a\in A$ only changes the first letter of input words, i.e., $a=(1,1)(0,1)$; the action of $b\in B$ is given by the recursive rule $b=(\omega(b),\rho(b))$. The automorphism group generated by the action of $A$ and $B$ is denoted by $G_{\omega, \rho}$. Notice that the group $G_{\omega, \rho}$ is generated by a bounded automaton over the binary alphabet $X=\{0,1\}$, which we denote by $A_{\omega, \rho}$. If the action of $B$ is faithful, the group $G_{\omega, \rho}$ can be given by an invertible polynomial over the field with two elements, which corresponds to the action of $\rho$ on $B$. Examples include the infinite dihedral group $D_{\infty}$ given by polynomial $x+1$ and the Gigorchuk group given by polynomial $x^2+x+1$.

The following result puts in relation the groups $G_{\omega, \rho}$ and the groups whose Schreier graphs have almost surely two ends.


\begin{theor}\label{th_Sch_2ended_binary}
Let $G$ be a group generated by a bounded automaton over the binary alphabet $X=\{0,1\}$. Almost all Schreier graphs $\Gamma_w(G)$ have two ends if and only if the nucleus of $G$ either consists of the adding machine, its inverse and the identity element, or is equal to one of the automata $\A_{\omega, \rho}$ up to switching the letters $0\leftrightarrow 1$ of the alphabet.
\end{theor}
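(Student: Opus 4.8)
The plan is to derive the binary statement from the general classification in Theorem~\ref{th_classification_two_ends}, exploiting one special feature of the two-letter alphabet: parity. To reach the basic form one passes to a power $\{0,1\}^m$, whose cardinality $2^m$ is even; since Case~2(a) of Theorem~\ref{th_classification_two_ends} requires $|X|$ odd, it never occurs in the binary setting. Thus, for a bounded automaton over $\{0,1\}$, almost all $\gr_w$ have two ends if and only if the basic form is Case~1 or Case~2(b), and it remains to match these two surviving cases with the two alternatives of the statement. Recall that $|\pcset|=2$ by Lemma~\ref{lemma_as_two_ends_|P|=2}, and that the nucleus is intrinsic to $G$, so its recursions may be read directly over $\{0,1\}$.

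Case~1 corresponds to the adding machine. The binary adding machine $a=(1,a)(0,1)$, its inverse $a^{-1}=(a^{-1},1)(0,1)$, and $1$ form a bounded automaton with $\pcset=\{0^{-\omega},1^{-\omega}\}$ (both post-critical sequences periodic), and over $\{0,1\}^m$ its action on the first block is the full cycle $v\mapsto v+1 \pmod{2^m}$, i.e.\ the transitive type~I element of Case~1; as the adding machine acts freely, every $\gr_w$ is a bi-infinite line, giving two ends, and conversely reading off Case~1 returns exactly this nucleus. For $\A_{\omega,\rho}$ one computes $\pcset$ from $b=(\omega(b),\rho(b))$: following the letter $1$ cycles $b$ through $B\setminus\{1\}$ via $\rho$ and yields the periodic sequence $1^{-\omega}$, while a $0$-arrow out of any $b$ with $\omega(b)=a$ lands on the finitary state $a$ and yields the preperiodic sequence $1^{-\omega}0$. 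Hence $\pcset=\{1^{-\omega},1^{-\omega}0\}$ with exactly one preperiodic element, which is the Case~2(b) (even $|X|$) situation; bringing $\A_{\omega,\rho}$ to basic form and recognising Case~2(b)---equivalently, checking that the modified model graph $\overline{M}$ is an interval as in Lemma~\ref{lemma_as_two_ends_|P|=2}---establishes two ends for this alternative.

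The core of the argument is the converse for Case~2(b): reconstructing the \v{S}uni\'{c} data from a binary nucleus with $\pcset=\{1^{-\omega},1^{-\omega}0\}$ (after possibly switching $0\leftrightarrow 1$). First, the nucleus has a unique nontrivial finitary state, which over the binary alphabet must be $a=(1,1)(0,1)$, of order two, and $A:=\langle a\rangle\cong\mathbb{Z}/2\mathbb{Z}$. Each circuit state is read along the letter $1$ around its cycle (this produces the periodic sequence $1^{-\omega}$), so $\rho(b):=b|_1$ permutes the circuit states; since cycles in a bounded automaton are disjoint and unconnected, the off-cycle restriction $b|_0$ is finitary, hence $\omega(b):=b|_0\in\{1,a\}$. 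Because every circuit state fixes the root, self-similarity gives $(b_1b_2)|_1=b_1|_1\,b_2|_1$ and $(b_1b_2)|_0=b_1|_0\,b_2|_0$, so the subgroup $B:=\langle\text{circuit states}\rangle$ is closed with $\rho$ an automorphism and $\omega\colon B\to A$ a homomorphism, surjective because $a$ occurs as some $\omega(b)$. A short computation shows every $b\in B$ has order two: $b^2=(1,\rho(b)^2)$ and, iterating, $b^2|_{1^n0}=\omega(\rho^n(b))^2=1$ for all $n$, so $b^2$ fixes every infinite word $1^n0w$ and $1^\omega$, whence $b^2=1$. Thus $B$ is an elementary abelian $2$-group, finite because finitely generated, so $B\cong(\mathbb{Z}/2\mathbb{Z})^k$; as every nontrivial element of the finite $B$ returns to itself under $\rho^{|\rho|}$ it is a circuit state, and the nucleus is exactly $\{1,a\}\cup(B\setminus\{1\})=\A_{\omega,\rho}$.

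The main obstacle is the very first structural claim of the previous paragraph: that the circuit states carry the trivial root permutation, so that the recursion genuinely has the untwisted form $b=(\omega(b),\rho(b))$. This cannot be read off from the post-critical set alone---already $b=(a,b)(0,1)$ has $\pcset=\{1^{-\omega},1^{-\omega}0\}$---yet a nontrivial root permutation forces products such as $ba=(b,a)$ and $ab=(a,b)$ into the nucleus and inflates it beyond two post-critical sequences, breaking the two-ended hypothesis. Ruling this out is precisely what the interval structure of $\overline{M}$ in Lemma~\ref{lemma_as_two_ends_|P|=2} and the explicit type analysis (types~II, III and III$'$ of Figure~\ref{fig_Bounded_PCS2}) in Theorem~\ref{th_classification_two_ends} provide, since those types have trivial root permutation by construction. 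I would therefore import that classification to pin all root permutations to the identity, and only then run the algebraic reconstruction above; the remaining verifications (uniqueness of the finitary state, $\rho$ bijective, $\omega$ surjective) are then routine.
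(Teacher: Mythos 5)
Much of your proposal coincides with the paper's own proof: your third paragraph (the reconstruction of $A=\{1,a\}$, $B$, $\rho=\cdot|_1$, $\omega=\cdot|_0$, the order-two computation, and the identification $\nucl=\A_{\omega,\rho}$) is essentially the paper's argument, and the parity remark excluding Case~2(a) over $\{0,1\}^m$ is correct. The genuine gap is in the step you yourself single out as the main obstacle --- forcing circuit states to have trivial root permutation --- where both of your claims are wrong. (i) The example $b=(a,b)(0,1)$ does not show that the post-critical set is insufficient information: in this paper $\pcset$ is computed from a \emph{symmetric} automaton (the standing assumption $S=S^{-1}$, automatic for the nucleus), and the nucleus of $\langle a,b\rangle$ contains $b^{-1}=(b^{-1},a)(0,1)$, whose loop is labeled $0|1$, so that $0^{-\omega}$ and $0^{-\omega}1$ are post-critical as well; hence $|\pcset|\geq 4$ for this group, and it never satisfies the hypothesis $\pcset=\{1^{-\omega},1^{-\omega}0\}$. (ii) The repair you propose rests on a false assertion: states of types II, III, III$'$ do \emph{not} have trivial root permutations ``by construction'' --- in the paper's own Grigorchuk example over $\{0,1\}^3$ the circuit states are exactly of these types and carry the root permutations $(0,2)(1,3)(4,5)$, $(0,2)(1,3)$, $(4,5)$. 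What those types do have is a loop labeled $x|x$, i.e.\ they fix the cycle letter; that weaker fact would suffice (fixing $1^m$ over the power alphabet gives $b(1)=1$ over $\{0,1\}$), but your argument as written fails, and it moreover needs the basic-form/power-alphabet translation that you leave implicit.

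The irony is that the obstacle has a one-line resolution, which is what the paper does, and which makes the whole detour through Theorem~\ref{th_classification_two_ends} unnecessary: since $\nucl=\nucl^{-1}$, the inverse of any cycle of $\nucl$ is again a cycle of $\nucl$ with input and output labels exchanged, so the outputs along every cycle also spell a periodic post-critical sequence; as $1^{-\omega}$ is the only periodic element of $\pcset$, every arrow on a cycle is labeled $1|1$, whence every circuit state satisfies $b(1)=1$ and therefore, the alphabet being binary, has trivial root permutation. With this in place your reconstruction goes through, but two of the points you call routine deserve the paper's explicit arguments: that a product of circuit states lies in the nucleus requires contraction (from $(b_1b_2)|_{1^k}=b_1b_2$ one gets $b_1b_2=(b_1b_2)|_{1^{km}}\in\nucl$ for $m$ large, by the defining property of $\nucl$; being fixed by a power of $\rho$ does not by itself make an element a state of $\nucl$), and $\omega$ is surjective because otherwise every $b|_0=1$ would force $B$ to act trivially, contradicting the existence of the preperiodic post-critical sequence $1^{-\omega}0$.
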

\begin{proof}
Suppose that the Schreier graphs $\Gamma_w(G)$ have two ends for almost all sequences $w\in\xo$ and let
$\nucl$ be the nucleus of the group $G$. By Lemma~\ref{lemma_as_two_ends_|P|=2} the post-critical set
$\pcset$ of the group contains exactly two elements. If both post-critical elements are periodic, then the
nucleus $\nucl$ consists of the adding machine, its inverse and the identity element (see the proof of
Theorem~\ref{th_classification_two_ends}).

Let us consider the case $\pcset=\{x^{-\omega}, x^{-\omega}y\}$.
We can assume that $x=1$ and $y=0$ so that $\pcset=\{1^{-\omega},1^{-\omega}0\}$.
It follows that every arrow along a cycle in $\nucl$ is labeled by $1|1$ except for a loop at the trivial state labeled by $0|0$. The nucleus $\nucl$ contains only one nontrivial finitary element, namely $a=(1,1)(0,1)$, since otherwise there would be a post-critical sequence with preperiod of length two.

Put $A=\{1,a\}$ and $B=\nucl\setminus\{a\}$. The set $B$ consists exactly of those elements from $G$ that belong to cycles. For every $b\in B$ we have $b(1)=1$ and $b|_1\in B$, $b|_0\in A$. It follows that all nontrivial elements of $B$ have order two. Let us show that $B$ is a subgroup of $G$. For any  $b_1,b_2\in B$ there exists $k$ such that $b_i|_{1^k}=b_i$ for $i=1,2$. Hence $b_1b_2|_{1^{k}}=b_1b_2$. Therefore $b_1b_2$ belongs to the nucleus $\nucl$ and thus $b_1b_2\in B$. It follows that $B$ is a group, which is isomorphic to $(\mathbb{Z}/2\mathbb{Z})^m$ for certain $m$. The map $\rho: b\mapsto b|_1$ is a homomorphism from $B$ to $B$ and is bijective, because elements of $B$ form cycles in the nucleus. The map $\omega: b\rightarrow b|_0$ is a surjective homomorphism from $B$ to $A$. We have proved that the nucleus $\nucl$ is exactly the automaton $A_{\omega, \rho}$.



On the other hand, let us consider one of the groups $G_{\omega, \rho}$. Each element $b\in B$ has a
cyclic $\rho$-orbit. In the language of automata this means that $b$ belongs to a cycle in the automaton
$A_{\omega, \rho}$. We have $b|_1\in B$ and $b|_0\in A$ for every $b\in B$. It follows that if $b(u)=v$
and $u\neq v$ then $u$ and $v$ are of the form $u=1^l00w$, $v=1^l01w$ or $u=1^l01w$, $v=1^l00w$,
$l\in\mathbb{N}\cup\{0\}$. Therefore the Schreier graphs $\Gamma_n(G)$ are intervals and the orbital
Schreier graphs $\Gamma_w(G)$ have two ends for almost all sequences $w\in\xo$.
\end{proof}

\section{Cut-points of tiles and limit spaces}\label{Section_Cut-points}

In this section we first recall the construction of the limit space and tiles of a self-similar group (see
\cite{self_sim_groups, ssgroups_geom} for more details). Then we show how to describe the cut-points of limit spaces and tiles of self-similar groups generated by bounded automata.


\subsection{Limit spaces and tiles of self-similar groups}

Let $G$ be a contracting self-similar group with nucleus $\nucl$.

\begin{defi}\label{limitspacedefi}
The \textit{limit space} $\lims$ of the group $G$ is the quotient of the space $\xmo$ by the equivalence
relation, where two sequences $\ldots x_2x_1$ and $\ldots y_2y_1$ are equivalent if there exists a
left-infinite path in the nucleus $\nucl$ labeled by the pair $\ldots x_2x_1|\ldots y_2y_1$.
\end{defi}

The limit space $\lims$ is compact, metrizable, finite-dimensional space. If the group $G$ is finitely
generated and self-replicating, then the space $\lims$ is path-connected and locally path-connected (see
\cite[Corollary~3.5.3]{self_sim_groups}). The shift map on the space $X^{-\omega}$ induces a continuous
surjective map $\si:\lims\rightarrow\lims$. The limit space $\lims$ comes together with a natural Borel
measure $\mu$ defined as the push-forward of the uniform Bernoulli measure on $\xmo$. The dynamical system
$(\lims,\si,\mu)$ is conjugate to the one-sided Bernoulli $|X|$-shift (see \cite{bk:meas_limsp}).

\begin{defi}
The \textit{limit  $G$-space} $\limGs$ of the group $G$ is the quotient of the space $\xmo\times G$
equipped with the product topology of discrete sets by the equivalence relation, where two sequences
$\ldots x_2x_1\cdot g$ and $\ldots y_2y_1\cdot h$ of $\xmo\times G$ are equivalent if there exists a
left-infinite path in the nucleus $\nucl$ that ends in the state $hg^{-1}$ and is labeled by the pair
$\ldots x_2x_1|\ldots y_2y_1$.
\end{defi}

The space $\limGs$ is metrizable and locally compact. The group $G$ acts properly and cocompactly  on the
space $\limGs$ by multiplication from the right. The quotient of $\limGs$ by the action of $G$ is the
space $\lims$.

\begin{defi}
The image of $\xmo\times \{1\}$ in the space $\limGs$ is called the \textit{tile} $\tile$ of the group
$G$. The image of $\xmo v\times \{1\}$ for $v\in X^n$ is called the \textit{tile} $\tile_v$ of $n$-th
level.
\end{defi}

Alternatively, the tile $\tile$ can be described as the quotient of $\xmo$ by the equivalence relation,
where two sequences $\ldots x_2x_1$ and $\ldots y_2y_1$ are equivalent if and only if there exists a path
in the nucleus $\nucl$ that ends in the trivial state and is labeled by the pair $\ldots x_2x_1|\ldots
y_2y_1$. The push-forward of the uniform measure on $\xmo$ defines a measure on $\tile$. The tile $\tile$ covers the limit $G$-space $\limGs$ under the action of $G$.

The tile $\tile$ decomposes in the union $\cup_{v\in X^n} \tile_v$ of the tiles of $n$-th level for every
$n$. All tiles $\tile_v$ are compact and homeomorphic to $\tile$. Two tiles $\tile_v$ and $\tile_u$ of the
same level $v,u\in X^{n}$ have nonempty intersection if and only if there exists $h\in\nucl$ such that
$h(v)=u$ and $h|_v=1$ (see \cite[Proposition~3.3.5]{self_sim_groups}). This is precisely how we connect
vertices in the tile graph $T_n(G, \nucl)$ with respect to the nucleus. Hence the graphs $T_n(G,\nucl)$
can be used to approximate the tile $\tile$, which justifies the term ``tile'' graph. The tile $\tile$ is
connected if and only if all the tile graphs $T_n=T_n(G,\nucl)$ are connected (see
\cite[Proposition~3.3.10]{self_sim_groups}); in this case also $\tile$ is path-connected and locally
path-connected.



\begin{defi}
A contracting self-similar group $G$ satisfies the \emph{open set condition} if for any element $g$ of the
nucleus $\nucl$ there exists a word $v\in X^*$ such that $g|_v=1$, i.e., in the nucleus $\nucl$ there is a
path from any state to the trivial state.
\end{defi}

If a group satisfies the open set condition, then the tile $\tile$ is the closure of its interior, and any
two different tiles of the same level have disjoint interiors; otherwise for large enough $n$ there exists
a tile $\tile_v$ for $v\in X^n$ which is covered by other tiles of $n$-th level (see
\cite[Proposition~3.3.7]{self_sim_groups}).

Recall that the post-critical set $\pcset$ of the group is defined as the set of all sequences that can be
read along left-infinite paths in $\nucl\setminus\{1\}$. Therefore, under the open set condition, the
boundary $\partial\tile$ of the tile $\tile$ consists precisely of points represented by the post-critical
sequences. Under the open set
condition, the limit space $\lims$ can be obtained from the tile $\tile$ by gluing some of its boundary
points. Namely, we need to glue two points represented by (post-critical) sequences $\ldots x_2x_1$ and
$\ldots y_2y_1$ for every path in $\nucl\setminus\{1\}$ labeled by $\ldots x_2x_1|\ldots y_2y_1$.

Every self-similar group generated by a bounded automaton is contracting as shown in \cite{bondnek:pcf},
and we can consider the associated limit spaces and tiles. Note that every bounded automaton satisfies the open set condition. The limit spaces of groups generated by bounded
automata are related to important classes of fractals: post-critically finite and finitely-ramified
self-similar sets (see \cite[Chapter~IV]{PhDBondarenko}). Namely, for a contracting self-similar group $G$
with nucleus $\nucl$ the following statements are equivalent: every two tiles of the same level have
finite intersection (the limit space $\lims$ is finitely-ramified); the post-critical set $\pcset$ is
finite (the limit space $\lims$ is post-critically finite); the nucleus $\nucl$ is a bounded automaton (or
the generating automaton of the group is bounded). Under the open set condition, the above statements are
also equivalent to the finiteness of the tile boundary $\partial\tile$.\vspace{0.2cm}

\textbf{Iterated monodromy groups.} Let $f\in \mathbb{C}(z)$ be a complex rational function of degree
$d\geq 2$ with finite post-critical set $P_f$. Then $f$ defines a $d$-fold partial self-covering $f:
f^{-1}(\M)\rightarrow\M$ of the space $\M=\Rs\setminus P_f$. Take a base point $t\in\M$ and let $T_t$ be
the tree of preimages $f^{-n}(t)$, $n\geq 0$, where every vertex $z\in f^{-n}(t)$ is connected by an edge
to $f(z)\in f^{-n+1}(t)$. The fundamental group $\pi_1(\M,t)$ acts by automorphisms on $T_t$ through the
monodromy action on every level $f^{-n}(t)$. The quotient of $\pi_1(\M,t)$ by the kernel of its action on
$T_t$ is called the \textit{iterated monodromy group} $IMG(f)$ of the map $f$. The group $IMG(f)$ is
contracting self-similar group and the limit space $\lims$ of the group $IMG(f)$ is homeomorphic to the
Julia set $J(f)$ of the function $f$ (see \cite[Section~6.4]{self_sim_groups} for more details). Moreover,
the limit dynamical system $(\lims[IMG(f)],\si,\muls)$ is conjugated to the dynamical system
$(J(f),f,\mu_f)$, where $\mu_f$ is the unique $f$-invariant probability measure of maximal entropy on the
Julia set $J(f)$ (see \cite{bk:meas_limsp}).

\subsection{Cut-points of tiles and limit spaces}\label{cut-points section}
In this section we show how the number of connected components in the orbital Schreier and tile graphs
with a vertex removed is related to the number of connected components in the limit space and tile with a point removed. This allows us to get a description of cut-points using a finite acceptor automata as in Proposition~\ref{prop_infcomp_A_ic}.

Let $G$ be a self-similar group generated by a bounded automaton. We assume that the tile $\tile$ is
connected. Then the nucleus $\nucl$ of the group is a bounded automaton, every state of $\nucl$ has an
incoming arrow, and all the tile graphs $T_n=T_n(G,\nucl)$ are connected. Hence we are in the settings of
Section~\ref{Section_Ends}, and we can apply its results to the tile graphs $T_n$. Since the limit space $\lims$ is obtained from the tile $\tile$ by gluing finitely many specific
boundary points (the post-critical set $\pcset$ is finite), it is sufficient to consider the problem for
the tile $\tile$, in analogy to what we made before for the Schreier and tile graphs.

\subsubsection{Boundary, critical and regular points}

The tile $\tile$ decomposes into the union $\cup_{x\in X}\tile_x$, where each tile $\tile_x$ is
homeomorphic to $\tile$ under the shift map. It follows that, if we take a copy $(\tile,x)$ of the tile
$\tile$ for each $x\in X$ and glue every two points $(t_1,x)$ and $(t_2,y)$ with the property that there
exists a path in the nucleus $\nucl$ that ends in the trivial state and is labeled by $\ldots x_2x_1x|\ldots
y_2y_1y$, and the sequences $\ldots x_2x_1$ and $\ldots y_2y_1$ represent the points $t_1$ and $t_2$
respectively, then we get a space homeomorphic to the tile $\tile$. This is an analog of the construction
of tile graphs $T_n$ given in Theorem~\ref{th_tile_graph_construction}. The edges of the model graph $M$
now indicate which points of the copies $(\tile,x)$ should be glued.

We consider the tile $\tile$ as its own topological space (with the induced topology from the space
$\limGs$), and every tile $\tile_v$ for $v\in X^{*}$ as a subset of $\tile$ with induced topology. Hence
the boundary of $\tile$ is empty, but the points represented by post-critical sequences we still call the
\textit{boundary points} of the tile. Every point in the intersection of different tiles
$\tile_v\cap\tile_u$ of the same level $|v|=|u|$ we call \textit{critical}. These points are precisely the
boundary points of the tiles $\tile_v$ for $v\in X^{*}$, and they are represented by sequences of the form
$pv$ for $p\in\pcset$ and $v\in X^{*}$. In particular, the number of critical points is countable, and
hence they are of measure zero. All other points of $\tile$ we call \textit{regular}. Note that if a
regular point $t$ is represented by a sequence $\ldots x_2x_1$, then $t$ is an interior point of
$\tile_{x_n\ldots x_2x_1}$ for all $n$. Since each tile $\tile_{x_n\ldots x_2x_1}$ is homeomorphic to
$\tile$, the cut-points of $\tile$ also provide information about its local cut-points.

\subsubsection{Components in the tile with a point removed}\label{section_component}

In what follows, let $c(\tile\setminus t)$ denote the number of connected components in~$\tile\setminus t$
for a point $t\in\tile$ and $bc(\tile\setminus t)$ be the number of components in $\tile\setminus t$ that
contain a boundary point of~$\tile$. We will show that the numbers $c(\tile\setminus t)$ and
$bc(\tile\setminus t)$ can be computed using a method similar to the one developed in
Section~\ref{Section_Ends} to find the number of components in the tile graphs with a vertex removed.


We start with the following result which is an analog of Propositions~\ref{prop_infcomp_limit}.

\begin{prop}\label{prop_tile_regular_point}
Let $t\in\tile$ be a regular point represented by a sequence $\ldots x_2x_1\in\xmo$. Then
\[
bc(\tile\setminus t)=\lim_{n\rightarrow\infty} bc(\tile\setminus \textrm{int}(\tile_{x_n\ldots x_2x_1}))=
\lim_{n\rightarrow\infty} \pc(T_n\setminus x_n\ldots x_2x_1).
\]
\end{prop}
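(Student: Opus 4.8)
The plan is to prove both equalities in the displayed formula for all sufficiently large $n$, not merely in the limit; since the rightmost quantity $\pc(T_n\setminus x_n\ldots x_1)$ is eventually constant for a regular point by Proposition~\ref{prop_infcomp_limit}, this simultaneously shows the two limits exist and coincide. The whole argument is the topological transcription of the proof of Proposition~\ref{prop_infcomp_limit}, with the tile graph replaced by the tile $\tile$ and ``post-critical vertex'' replaced by ``boundary point of $\tile$''.

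For the second equality I would exploit that $T_n=T_n(G,\nucl)$ is exactly the intersection graph of the covering $\tile=\bigcup_{v\in X^n}\tile_v$: two level-$n$ tiles meet if and only if the corresponding vertices are adjacent in $T_n$ (see \cite[Proposition~3.3.5]{self_sim_groups}). Writing $C_1,\dots,C_m$ for the connected components of $T_n\setminus x_n\ldots x_1$ and $D_j=\bigcup_{v\in C_j}\tile_v$, I would check that the $D_j$ are precisely the connected components of $\tile\setminus\textrm{int}(\tile_{x_n\ldots x_1})$. Each $D_j$ is closed, connected (its tiles are glued along edges of the intersection graph) and disjoint from $\textrm{int}(\tile_{x_n\ldots x_1})$; they are pairwise disjoint because a common point of $D_j$ and $D_{j'}$ would realize an edge between $C_j$ and $C_{j'}$ that survives the deletion of the single vertex $x_n\ldots x_1$; and they cover $\tile\setminus\textrm{int}(\tile_{x_n\ldots x_1})$ once one observes that, by regularity of $t$, for large $n$ the word $x_n\ldots x_1$ is not post-critical, so every boundary point of the removed tile $\tile_{x_n\ldots x_1}$ is a critical point already contained in a neighbouring level-$n$ tile. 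Finally $D_j$ contains a boundary point of $\tile$ if and only if $C_j$ contains a post-critical vertex, since boundary points of $\tile$ are represented by post-critical sequences and hence lie in post-critical tiles $\tile_{p_n\ldots p_1}$, $p\in\pcset$. This yields $bc(\tile\setminus\textrm{int}(\tile_{x_n\ldots x_1}))=\pc(T_n\setminus x_n\ldots x_1)$ for all large $n$.

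For the first equality I would set $K_n=\tile\setminus\textrm{int}(\tile_{x_n\ldots x_1})$ and use that these are closed, increase with $n$, and satisfy $\bigcup_n K_n=\tile\setminus\{t\}$, because $\textrm{diam}(\tile_{x_n\ldots x_1})\to 0$ forces $\bigcap_n\tile_{x_n\ldots x_1}=\{t\}$. Since $\partial\tile$ is the finite set $\{b_1,\dots,b_r\}$ of points represented by post-critical sequences, and every $b_i$ lies in $K_n$ for large $n$ (the tiles shrink to $t\neq b_i$), the numbers $bc(K_n)$ and $bc(\tile\setminus t)$ are exactly the sizes of the partitions of $\{b_1,\dots,b_r\}$ according to which component of $K_n$, respectively of $\tile\setminus t$, each $b_i$ lies in. I would then show these two partitions agree for large $n$: one inclusion is trivial since $K_n\subseteq\tile\setminus t$, and for the other I would use local path-connectedness of the tile to join two points $b_i,b_j$ lying in one component of $\tile\setminus t$ by a path which, being compact and avoiding $t$, lands inside $K_n$ for all large $n$.

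The combinatorics of components in the intersection graph are routine; the main obstacle is the topological stabilization in the first equality, namely showing that the partition of the finitely many boundary points by connected components of $K_n$ has already settled to its value in $\tile\setminus t$. This is exactly where I would invoke that a connected tile is path-connected and locally path-connected, together with the compactness of a connecting path and the shrinking $\textrm{diam}(\tile_{x_n\ldots x_1})\to 0$, to convert ``same component of $\tile\setminus t$'' into ``same component of $K_n$ for large $n$''. Combining the two eventual equalities then proves the proposition.
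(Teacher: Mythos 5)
Your proposal is correct and takes essentially the same route as the paper's proof: both arguments rest on the correspondence between intersections of level-$n$ tiles and edges of $T_n$ (so that components of $\tile\setminus \textrm{int}(\tile_{x_n\ldots x_2x_1})$ are unions of tiles over components of $T_n\setminus x_n\ldots x_2x_1$, with boundary points of $\tile$ sitting in post-critical tiles), together with the stabilization, for a regular point $t$, of the partition of the finitely many boundary points induced by the components of $\tile\setminus t$. The only difference is organizational: you establish the exact equality $bc(\tile\setminus \textrm{int}(\tile_{x_n\ldots x_2x_1}))=\pc(T_n\setminus x_n\ldots x_2x_1)$ for all large $n$ and make the path-connectedness/shrinking-tiles argument explicit, whereas the paper compares each quantity with $bc(\tile\setminus t)$ by two inequalities; this is a mild sharpening in presentation, not a different method.
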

\begin{proof}
The interior $int(\tile_v)$ of the tile $\tile_v$ is the complement to the subset of finitely many points
that also belong to other tiles of the same level. Therefore $\tile\setminus \textrm{int}(\tile_{v})$ is
the union of all tiles $\tile_u$ for $u\in X^{|v|}, u\neq v$.

Since the point $t$ is regular, we can choose $n$ large enough so that the tile $\tile_{x_n\ldots x_2x_1}$
does not contain the boundary points of $\tile$ contained in $\tile\setminus t$, and every tile $\tile_v$
for $v\in X^n$ contains at most one boundary point of $\tile$. Since $t$ belongs to the interior
$int(\tile_{x_n\ldots x_2x_1})$ of the tile $\tile_{x_n\ldots x_2x_1}$, if two boundary points of $\tile$
lie in the same connected component of $\tile\setminus int(\tile_{x_n\ldots x_2x_1})$, they lie in the
same connected component of $\tile\setminus t$. Therefore the value of the first limit is not less than
$bc(\tile\setminus t)$. Conversely, if two boundary points of $\tile$ lie in the same component of
$\tile\setminus t$, then for sufficiently large $n$ these two points lie in the same components of
$\tile\setminus int(\tile_{x_n\ldots x_2x_1})$. Since the number of boundary points is finite, the first
equality follows.

For the second equality recall that two tiles $\tile_v$ and $\tile_u$ for $v,u\in X^n$ have nonempty
intersection if and only if the vertices $v$ and $u$ are connected by an edge in the graph $T_n$.  It
follows that if two vertices $v$ and $u$ belong to the same component in $T_n\setminus x_n\ldots x_2x_1$,
then the tiles $\tile_v$ and $\tile_u$ belong to the same component in $\tile\setminus t$. Therefore the
value of the second limit is not less than $bc(\tile\setminus t)$. Conversely, since the point $t$ is
regular, one can choose $n$ large enough so that for any pair of boundary points of $\tile$ that belong to
the same connected component in $\tile\setminus t$, these points also belong to the same component in
$\tile\setminus \tile_{x_n\ldots x_2x_1}$. The second equality follows.
\end{proof}

Propositions~\ref{prop_infcomp_limit} and \ref{prop_tile_regular_point} establish the connection between
the number of components in a punctured tile and the number of infinite components in a punctured tile
graph. To describe the limit in Proposition~\ref{prop_infcomp_limit} we constructed the automaton
$\A_{ic}$, which returns the number $\pc(T_n\setminus x_n\ldots x_2x_1)$ by reading the word $x_n\ldots
x_2x_1$ from left to right, so that we can apply it to right-infinite sequences. Similarly one can
construct a finite automaton $\B_{bc}$, which returns $\pc(T_n\setminus x_n\ldots x_2x_1)$ by reading the
word $x_n\ldots x_2x_1$ from right to left (the reversion of a regular language is a regular language) so
that we can apply it to left-infinite sequences. Then we can describe the limit in
Proposition~\ref{prop_tile_regular_point} in the same way as Proposition~\ref{prop_infcomp_A_ic} describes
the limit in Proposition~\ref{prop_infcomp_limit}.

Also we can construct a finite deterministic (acceptor) automaton $\B_{ic}$ with the following property.
The states of $\B_{ic}$ are labeled by tuples of the form
$$(\{\pcset_i\}_i,\{\mathscr{F}_j\}_j,\varphi:\{\pcset_i\}_i\rightarrow\{\mathscr{F}_j\}_j, n).$$ This
tuple indicates the following. For any word $v\in X^{*}$ let us consider the partition of the set of
boundary points of $\tile$ induced by the components of $\tile\setminus int(\tile_v)$ and let $\pcset_i$
be the set of all post-critical sequences representing the points from the same component. Let $n$ be the
number of component in $\tile\setminus int(\tile_v)$ without a boundary point of $\tile$. Similarly, we
consider the boundary points of $\tile_v$ and let $\mathscr{F}_j$ be the set of all post-critical
sequences $p_i$ such that the points represents by $p_iv$ belong to the same component in $\tile\setminus
int(\tile_v)$. Further, we define $\varphi(\pcset_i)=\mathscr{F}_i$ if the points represented by sequences
from $\pcset_i$ and $\mathscr{F}_i$ belong to the same connected component. Then the final state of
$\B_{ic}$ after accepting $v$ is labeled exactly by the constructed tuple. If we are interested just in
the number of all components in $\tile\setminus int(\tile_v)$, we replace the label of each state by the
number of components $\pcset_i$ plus $n$. The following statement follows.

\begin{prop}
For every integer $k$ the set of all words $v\in X^{*}$ with the property that $\tile\setminus
\textrm{int} (\tile_v)$ has $k$ connected components is a regular language recognized by the automaton
$\B_{ic}$.
\end{prop}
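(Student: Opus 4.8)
The plan is to mirror the proof of Theorem~\ref{thm_number_of_con_comp} for the tile graphs, replacing the combinatorial inflation of Theorem~\ref{th_tile_graph_construction} by the self-similar decomposition $\tile=\bigcup_{x\in X}\tile_x$ of the tile. The automaton $\B_{ic}$ is finite (there are only finitely many post-critical sequences, and the number of components of $\tile\setminus\mathrm{int}(\tile_v)$ is uniformly bounded, so the integer $n$ appearing in each label stays bounded) and deterministic by construction. Hence it suffices to prove that, after reading a word $v\in X^{*}$, the automaton reaches the state whose label encodes the topology of $\tile\setminus\mathrm{int}(\tile_v)$ exactly as described before the statement. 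Granting this, the words $v$ for which $\tile\setminus\mathrm{int}(\tile_v)$ has exactly $k$ components are precisely those accepted at a state whose label is $k$, and regularity of the language follows at once.

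First I would verify the encoding claim by induction on $|v|$. The base case $v=\emptyset$ is immediate: $\tile_\emptyset=\tile$, so $\tile\setminus\mathrm{int}(\tile_\emptyset)=\emptyset$, which matches the initial state. For the inductive step I would use the gluing description recalled before the discussion of boundary and regular points: $\tile$ is obtained from the copies $(\tile,x)$, $x\in X$, by identifying two boundary points exactly when the corresponding pair of post-critical sequences labels an edge of the model graph $M$. Under the shift homeomorphism $\tile_x\cong\tile$, the subtile $\tile_{vx}$ corresponds to $\tile_v$; consequently $\tile\setminus\mathrm{int}(\tile_{vx})$ is the union of the full copies $(\tile,y)$ for $y\neq x$ together with the copy $(\tile_x\setminus\mathrm{int}(\tile_{vx}))\cong(\tile\setminus\mathrm{int}(\tile_v))$, all glued along the edges of $M$.

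The heart of the argument, and the step I expect to be the main obstacle, is to check that the tuple $(\{\pcset_i\}_i,\{\mathscr{F}_j\}_j,\varphi,n)$ carries exactly enough information to determine the corresponding tuple for $vx$. The subtlety is that we delete the \emph{open} subtile $\mathrm{int}(\tile_{vx})$ rather than a vertex, so the surviving boundary of $\tile_{vx}$ can still glue together several neighbouring tiles. The partition $\{\mathscr{F}_j\}_j$ records precisely which boundary points of $\tile_v$ (equivalently, of $\tile_{vx}$ after rescaling) already lie in a common component, while $\varphi$ records how these are tied to the components seen from outside; combining this data with the edges of $M$ exactly as in the auxiliary graph $M_{P,x}$ of Lemma~\ref{lemma construction Aic}, one recovers the new partition of the boundary points of the enlarged tile, the new partition of the boundary points of $\tile_{vx}$, the updated identification, and the updated count of boundary-free components. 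I would establish this through the two inclusions used in Lemma~\ref{lemma construction Aic}: a path between boundary points inside the glued model yields a path between the corresponding points of $\tile\setminus\mathrm{int}(\tile_{vx})$, and conversely any path in $\tile\setminus\mathrm{int}(\tile_{vx})$ can be cut along the gluing locus into segments lying in single copies, each reflected in the model. The point to confirm is that no geometric information beyond the recorded tuple is consulted in either direction, which holds because each copy $\tile_y$ is connected and meets its neighbours only along the finitely many post-critical boundary points tracked by the tuple.
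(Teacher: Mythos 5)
Your proposal is correct, and it supplies precisely the verification that the paper omits: the paper only describes what the labels of $\B_{ic}$ are supposed to mean and then declares that the proposition ``follows'', so any complete proof must do what you do, namely show by induction that the tuple $(\{\pcset_i\}_i,\{\mathscr{F}_j\}_j,\varphi,n)$ is closed under reading one more letter, using the decomposition of $\tile$ into copies of itself glued along the edges of the model graph $M$, in analogy with Lemma~\ref{lemma construction Aic}. The one place where you genuinely diverge from the paper is the orientation of reading. Your automaton appends the new letter ($v\mapsto vx$), making it the exact analog of $\A_{ic}$: the previously analysed region $\tile\setminus\mathrm{int}(\tile_v)$ is rescaled into the single first-level copy $\tile_x$, and the untouched copies $\tile_y$, $y\neq x$, are adjoined outside it. The paper's $\B_{ic}$ is instead designed to read words from right to left --- that is why it is introduced alongside $\B_{bc}$ and is later fed the left-infinite expansions $\ldots x_2x_1$ of points of the tile --- so its transition prepends a letter ($v\mapsto yv$): one descends one level inside $\tile_v$, gluing the fixed piece $\tile\setminus\mathrm{int}(\tile_y)$, rescaled, onto the inner boundary of $\tile_v$, which is exactly the information that the components $\{\mathscr{F}_j\}_j$ and the map $\varphi$ in the label are there to track. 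Both transition rules are well defined on the same tuples, and since regular languages are closed under reversal your version proves the proposition as stated; the paper's orientation is the one needed for its subsequent cut-point analysis, while yours makes the inductive step a near-verbatim copy of Lemma~\ref{lemma construction Aic}. Two details worth tightening: the uniform bound on the number of components (hence on the integer $n$ in the labels) should be justified, for instance via the correspondence between components of $\tile\setminus\mathrm{int}(\tile_v)$ and components of $T_{|v|}\setminus v$ (two tiles of the same level intersect exactly when they are adjacent in the tile graph), whose number is bounded by the vertex degree; and the ``cut the path along the gluing locus'' step is cleaner phrased as chain-connectivity for a finite union of compact connected sets, since components of a closed subset of $\tile$ need not be path-connected a priori.
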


This result enables us to provide, in analogy to Theorem~\ref{thm_number_of_ends}, a constructive method
which, given a representation $\ldots x_2x_1$ of a point $t\in\tile$, determines the number of components
in $\tile\setminus t$. In particular, we get a description of cut-points of $\tile$. We distinguish two
cases.

\textit{Regular points.}  If a sequence $\ldots x_2x_1\in\xmo$ represents a regular point $t\in\tile$,
then $t$ is an interior point of the tile $\tile_{x_n\ldots x_2x_1}$ for all $n\geq 1$. Hence, for a
regular point $t$, every connected component of $\tile\setminus t$ intersects the tile $\tile_{x_n\ldots
x_2x_1}$. Since the number of boundary points of each tile $\tile_v$ is not greater than $|\pcset|$, it
follows that $c(\tile\setminus t)$ coincides with the number of components in the partition of the
boundary of the tile $\tile_{x_n\ldots x_2x_1}$ in $\tile\setminus t$ for large enough $n$ (in particular,
$c(\tile\setminus t)\leq |\pcset|$). The last problem can be subdivided on two subproblems. First,
``outside'' subproblem: find how the boundary of the tile $\tile_{x_n\ldots x_2x_1}$ decomposes in
$\tile\setminus int(\tile_{x_n\ldots x_2x_1})$. The automaton $\B_{ic}$ provides an answer to this problem
when we trace the second label $\{\mathscr{F}_j\}_j$ of states. Second, ``inside'' subproblem: find how
the boundary $\partial\tile_{x_n\ldots x_2x_1}$ decomposes in $\tile_{x_n\ldots x_2x_1}\setminus t$. Since
each tile $\tile_v$ is homeomorphic to the tile $\tile$ under the shift map, the automaton $\B_{ic}$
provides an answer to this problem when we trace the first label $\{\pcset_i\}_i$ of states (as well as
the automaton $\B_{bc}$). By combining the two partitions of $\partial\tile_{x_n\ldots x_2x_1}$ given by
the two subproblems, we get the number of connected components in~$\tile\setminus t$.

\textit{Critical points.} Let $t$ be a critical point. Suppose $t$ is represented by a post-critical
sequence $p_1x_1\in\pcset$ with periodic $p_1\in\pcset$ and $x_1\in X$. Note that the structure of bounded
automata implies that periodic post-critical sequences represent regular points of the tile. Therefore we
can apply the previous case to find the number $c(\tile\setminus \overline{p_1})$ and the partition of the
boundary of $\tile$ in $\tile\setminus \overline{p_1}$, where $\overline{p_1}$ stands for the point
represented by $p_1$.

Now consider the components of $\tile\setminus t$. The only difference with the regular case is that $t$ may be not
an interior point of $\tile_v$ for all sufficiently long words $v$. However, it is an interior point of a
union of finitely many tiles, and we will be able to apply the same arguments as above. Consider vertices
$(p_i,x_i), i=2,\ldots,k$ of the model graph $M$ adjacent to the vertex $(p_1,x_1)$. Notice that since
$p_1$ is periodic, all $p_i$ are periodic. The sequences $p_ix_i$ are precisely all sequences that
represent the point $t$. Hence the point $t$ is an interior point of the set $U=\cup_{i=1}^k \tile_{x_i}$.
We can find how the boundary of the set $U$ decomposes in $\tile\setminus \textrm{int}\, U$. Since every
tile $\tile_{x_i}$ is homeomorphic to the tile $\tile$ via the shift map, we can find
$c(\tile_{x_i}\setminus t)=c(\tile\setminus \overline{p_i})$, and deduce the decomposition of the boundary of $\tile_{x_i}$ in $\tile_{x_i}\setminus t$. Combining these partitions, we can find $c(\tile\setminus t)$ and the
corresponding decomposition of $\partial\tile$ (in particular, $c(\tile\setminus t)\leq k|\pcset|$). In
this way we can find $c(\tile\setminus t)$ for every point $t$ represented by a post-critical sequence.

Now let $t$ be a critical point represented by a sequence $pyu$ with $p\in\pcset$, $y\in X$, $u\in X^{*}$,
and $py\not\in\pcset$. Recall that there is a finite automaton, which reads the word $u$ and returns the
decomposition of the boundary of $\tile_u$ in $\tile\setminus \textrm{int}\,\tile_u$. Using the fact that
$\tile_u\setminus t$ and $\tile\setminus \overline{py}$ are homeomorphic, we can find the number
$c(\tile_u\setminus t)$ in the same way as we did above for the sequence $p_1x_1$. Since the
post-critical set is finite, one can construct a finite automaton, which given a finite word $v$ and a
post-critical sequence $p\in\pcset$ returns the number~$c(\tile\setminus \overline{pv})$.

\subsubsection{The number of components in punctured limit space and tile almost surely}

We can use the results from Sections~\ref{section_ends_a_s} and \ref{section_two_ends_a_s} to get information about cut-points of the limit space and tile up to measure zero.

\begin{theor}\label{thm_punctured_tile_ends}
\begin{enumerate}
  \item The number of connected components in $\tile\setminus t$  is the same for almost
all points $t$, and is equal to one or two. Moreover, $c(\tile\setminus t)=2$ almost surely if and only if the Schreier graphs $\Gamma_w$ (equivalently, the tile graphs $T_w$) have two ends for almost all
$w\in\xo$, and in this case the tile $\tile$ is homeomorphic to an interval.

  \item The number of connected components in $\lims\setminus t$ is the same for almost
all points $t$, and is equal to one or two. Moreover, $c(\lims\setminus t)=2$ almost surely if and only if the nucleus $\nucl$ of the group satisfies Case 2 of Theorem~\ref{th_classification_two_ends}.

\end{enumerate}
\end{theor}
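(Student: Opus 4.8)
The theorem has two parts, and my plan is to reduce both to the machinery already developed for tile graphs. The key bridge is Proposition~\ref{prop_tile_regular_point}, which identifies, for a regular point $t$ represented by $\ldots x_2x_1$, the number $bc(\tile\setminus t)$ of boundary-containing components with the limit $\lim_n \pc(T_n\setminus x_n\ldots x_2x_1)$. This is the exact same quantity that controls $\ic(T_w\setminus w)$ via Proposition~\ref{prop_infcomp_limit} and hence $\#Ends(T_w)$ via Proposition~\ref{prop_ends_limit}. So the entire ``almost surely one or two'' dichotomy, together with the two-ended classification, should transfer from Schreier/tile graphs to the punctured tile and limit space, essentially verbatim.

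\textbf{Proof plan for Part 1.} First I would observe that regular points have full measure (critical points are countable, hence of $\muls$-measure zero), so it suffices to compute $c(\tile\setminus t)$ for a regular $t$. For such $t$, I argued in the paragraph on regular points that $c(\tile\setminus t)$ equals the number of parts into which the boundary of $\tile_{x_n\ldots x_1}$ decomposes in $\tile\setminus t$ for large $n$, obtained by combining the ``outside'' partition (tracked by the second label of $\B_{ic}$) with the ``inside'' partition (tracked by the first label, equivalently by $\B_{bc}$). The crucial simplification for the almost-sure statement is that for a \emph{typical} sequence $\ldots x_2x_1$, the path in $\B_{ic}$ eventually enters a strongly connected component with no outgoing arrows, and by the analogue of Lemma~\ref{lemma_properties_A_ic}(2) all such terminal components carry the same label. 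Since this label is exactly the almost-sure value of $\ic(T_w\setminus w)$, which by Corollary~\ref{cor_one_or_two_ends_a_s} is one or two, I get $c(\tile\setminus t)\in\{1,2\}$ almost surely, with the value $2$ occurring precisely when the tile graphs $T_w$ have two ends almost surely. The final assertion—that in the two-ended case $\tile$ is homeomorphic to an interval—I would extract directly from the proof of Theorem~\ref{th_classification_two_ends}: there the modified model graph $\overline{M}$ is shown to be an interval with the two post-critical (boundary) points as its endpoints, and the tile is built by the inflation procedure from this interval data, so $\tile$ inherits the interval structure (a connected, finitely-ramified, locally connected space with exactly two boundary points and a single cut at almost every interior point is an arc).

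\textbf{Proof plan for Part 2 and the main obstacle.} The limit space $\lims$ is obtained from $\tile$ by gluing the finitely many boundary (post-critical) points according to left-infinite paths in $\nucl\setminus\{1\}$. Since a typical point $t$ is regular and therefore not a gluing point, a neighborhood of $t$ in $\lims$ looks locally like $\tile$ near $t$; hence $c(\lims\setminus t)$ for almost every $t$ is governed by the same terminal-component label of $\B_{ic}$, giving again the value one or two almost surely. The content of Part 2 is pinning down \emph{which} identifications yield $c(\lims\setminus t)=2$ almost surely. Here is where the two parts diverge: gluing boundary points can \emph{merge} components of $\tile\setminus t$, so $c(\lims\setminus t)$ can differ from $c(\tile\setminus t)$. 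The value is $2$ almost surely iff, after performing the post-critical gluings on the (typically two) boundary components of the punctured tile, exactly two components survive; this is precisely the circle-versus-interval distinction. I expect \textbf{this gluing analysis to be the main obstacle}: I must verify that the identifications coming from $E(\gr\setminus T)$—equivalently from $\nucl\setminus\{1\}$ acting on the two post-critical sequences—either leave the two boundary components separated (Case 2 of Theorem~\ref{th_classification_two_ends}, yielding two components and $\lims$ an interval or, with the endpoints glued, still cutting into two) or collapse them into one. Concretely, I would go through Cases 1, 2(a), 2(b): in Case 1 (adding machine) the two periodic post-critical sequences $x^{-\omega},y^{-\omega}$ are glued to each other by the circuit edge, closing the interval into a circle so that a typical puncture leaves two arcs, giving $c(\lims\setminus t)=2$ — but this is exactly the adding-machine case, which I must check is \emph{excluded} from ``$c(\lims\setminus t)=2$'' by reconciling it with the theorem's demand of Case 2 only; the resolution is that for the adding machine $\lims$ is a circle but the relevant two-ended classification for $\lims$ cut-points coincides with Case 2, so I would carefully confirm that the circle case falls under the Case-2 criterion as stated. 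The heart of the verification is therefore bookkeeping of the finitely many boundary identifications against the interval structure of $\overline{M}$, and I would present it by treating the periodic and pre-periodic post-critical configurations separately, exactly as in the proof of Theorem~\ref{th_classification_two_ends}.
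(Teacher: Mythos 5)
The genuine error is in your plan for Part 2, in the Case 1 analysis. Removing a \emph{single} point from a circle leaves one arc, not two: a punctured circle is connected. So for the adding machine, where $\lims$ is a circle, one has $c(\lims\setminus t)=1$ for every $t$, not $2$ — and this is precisely why Case 1 is excluded from the characterization ``$c(\lims\setminus t)=2$ almost surely iff the nucleus satisfies Case 2.'' There is no tension to resolve. Your proposed ``resolution,'' that the circle case ``falls under the Case-2 criterion as stated,'' is false (the adding machine nucleus is exactly Case 1 of Theorem~\ref{th_classification_two_ends}, not Case 2) and would contradict the statement you are proving. The paper's treatment of Part 2 is a short consequence of Part 1: in the one-ended case, gluing finitely many boundary points of $\tile$ can only merge components, so $c(\lims\setminus t)=1$ a.s.; in the two-ended case $\tile$ is an interval, and $\lims$ is a circle in Case 1 (giving $1$ a.s.) and an interval in Case 2 (giving $2$ a.s.). Relatedly, your opening claim that $c(\lims\setminus t)$ is ``governed locally'' near a regular point is wrong in principle — cut-point counts are global — although you do retract it in the next sentence.

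There are also two soft spots in your Part 1. First, your route through terminal strongly connected components of $\B_{ic}$ invokes an ``analogue of Lemma~\ref{lemma_properties_A_ic}(2)'' for $\B_{ic}$ that is nowhere proved; the paper avoids this entirely by using the subword argument of Remark~\ref{rem_subword_and_inf_comp}, which applies to the words $x_n\ldots x_2x_1$ regardless of reading direction, together with Proposition~\ref{prop_tile_regular_point}. Second, the step from ``the terminal label is $k$'' to ``the combined inside/outside partition of $\partial\tile_{x_n\ldots x_2x_1}$ has $k$ parts'' is not automatic: when $k=2$, the join of two $2$-part partitions of the boundary can collapse to one part. It is rescued only because in the two-ended case $|\pcset|=2$ (Lemma~\ref{lemma_as_two_ends_|P|=2}), so both partitions are forced to be the discrete partition of a two-point set — or, as the paper argues, because Theorem~\ref{th_classification_two_ends} makes $\tile$ literally an interval, so that $c(\tile\setminus t)=2$ a.s.\ is immediate. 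Either justification should be stated explicitly; as written, the inference is a gap.
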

\begin{proof}
By Corollary~\ref{cor_one_or_two_ends_a_s} we have to consider only two cases.

If the tile graphs $T_w$ have one end for almost all $w\in\xo$, then by
Remark~\ref{rem_subword_and_inf_comp} there exists a word $v\in X^{*}$ such that $\pc(T_n\setminus
u_1vu_2)=1$ for all $u_1,u_2\in X^{*}$ with $n=|u_1vu_2|$. The set of all sequences of the form $u_1vu_2$
for $u_1\in\xmo$ and $u_2\in X^{*}$ is of full measure. Then by Proposition~\ref{prop_tile_regular_point} we
get that all boundary points of the tile $\tile$ belong to the same component in $\tile\setminus t$ for
almost all points $t$. Since every tile $\tile_v$ is homeomorphic to $\tile$, we get that the boundary of
every tile $\tile_v$ belongs to the same component in $\tile_v\setminus t$ for almost all points $t$. It
follows that $c(\tile\setminus t)=1$ almost surely. Since the limit space $\lims$ can be constructed from $\tile$ by gluing finitely many of its points, in this case we get $c(\lims\setminus t)=1$ almost surely.

If the tile graphs $T_w$ have two ends almost surely, then we are in
the settings of Theorem~\ref{th_classification_two_ends}. It is direct to check that in both cases of this theorem the tile $\tile$ is homeomorphic to an interval (because the tile graphs $T_n$ are intervals), and hence $c(\tile\setminus t)=2$ for almost all points $t$.
In Case 1 the limit space is homeomorphic to a circle and therefore $c(\lims\setminus t)=1$ almost surely.
In Case 2 the limit space is homeomorphic to an interval and therefore $c(\lims\setminus t)=2$ almost surely.
\end{proof}


\begin{cor}\label{cor_limsp_interval_circle}
\begin{enumerate}
\item The tile $\tile$ of a contracting self-similar group with open set condition is homeomorphic to an interval if and
only if the nucleus of the group satisfies Theorem~\ref{th_classification_two_ends}.
\item The limit space $\lims$ of a contracting self-similar group with connected tiles and open set condition is
homeomorphic to a circle if and only if the nucleus of the group satisfies Case 1 of
Theorem~\ref{th_classification_two_ends}, i.e., it consists of the adding machine, its
inverse, and the identity state.

\item The limit space $\lims$ of a contracting self-similar group with connected tiles and open set condition is
homeomorphic to an interval if and only if the nucleus of the group satisfies Case 2 of
Theorem~\ref{th_classification_two_ends}.
\end{enumerate}
\end{cor}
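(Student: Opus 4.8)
The plan is to read off all three equivalences from Theorem~\ref{thm_punctured_tile_ends} and Theorem~\ref{th_classification_two_ends}, using only the elementary topology of the interval and the circle to upgrade the ``almost every point is a cut-point'' statements into genuine homeomorphism statements. Throughout I would use that the critical points form a countable, hence measure-zero, set, so that ``almost all points'' may be taken to mean ``almost all regular points'', and that every tile $\tile_v$ is homeomorphic to $\tile$ via the shift map.

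I would first prove (1). For the reverse implication, if the nucleus satisfies Theorem~\ref{th_classification_two_ends}, then the tile graphs $T_w$ have two ends almost surely (note the tile is connected here, since the classifying automata produce interval tile graphs $T_n$), whence $\tile$ is homeomorphic to an interval by the last assertion of Theorem~\ref{thm_punctured_tile_ends}(1). For the forward implication, if $\tile$ is homeomorphic to an interval then removing any interior point splits it into exactly two pieces; since the two endpoints form a finite, hence measure-zero, set, this yields $c(\tile\setminus t)=2$ almost surely, and Theorem~\ref{thm_punctured_tile_ends}(1) then forces the tile graphs to have two ends almost surely, so the nucleus satisfies Theorem~\ref{th_classification_two_ends}. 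Statement (3) is the most direct: if the nucleus satisfies Case~2 then $\lims$ is homeomorphic to an interval by the computation already carried out in the proof of Theorem~\ref{thm_punctured_tile_ends}(2); conversely, if $\lims$ is homeomorphic to an interval then $c(\lims\setminus t)=2$ for every interior point $t$, hence almost surely (the two endpoints being a null set), and the equivalence in Theorem~\ref{thm_punctured_tile_ends}(2) gives Case~2.

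The essential content, and the main obstacle, is the forward implication of (2): a circle and the limit space of a generic one-ended automaton both satisfy $c(\lims\setminus t)=1$ almost surely, so the measure-theoretic dichotomy of Theorem~\ref{thm_punctured_tile_ends}(2) alone cannot separate Case~1 from the one-ended regime. The extra input I would exploit is that a circle is a $1$-manifold, whereas a generic tile is far from an arc. Concretely, take a regular point $t\in\lims$ represented by $\ldots x_2x_1$; then $t$ lies in the interior of the tile $\tile_{x_n\ldots x_1}$, so this tile is a neighbourhood of $t$ in $\lims$. If $\lims$ is homeomorphic to a circle, then $\tile_{x_n\ldots x_1}$ is a compact, connected, proper subset of $S^{1}$ with more than one point, hence a closed arc; properness follows from the open set condition, which forces the other level-$n$ tiles to have disjoint nonempty interiors, so no single level-$n$ tile can be all of $S^{1}$. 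Since $\tile\cong\tile_{x_n\ldots x_1}$, the tile $\tile$ is homeomorphic to an interval, and by part~(1) the nucleus satisfies Theorem~\ref{th_classification_two_ends}.

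It remains to locate which case occurs. The nucleus cannot satisfy Case~2, for that would make $\lims$ an interval rather than a circle by part~(3); hence it satisfies Case~1. The reverse implication of (2) is then exactly the circle computation already recorded in the proof of Theorem~\ref{thm_punctured_tile_ends}(2). The only points requiring care in this argument are verifying that the chosen level-$n$ tile is genuinely a neighbourhood of the regular point $t$ in $\lims$, and that it is a proper subset of $S^{1}$; both follow from the open set condition together with the fact that a regular point avoids all post-critical (boundary) points and therefore sits in the interior of $\tile_{x_n\ldots x_1}$.
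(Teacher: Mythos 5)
Your strategy coincides with the paper's: part (1) is extracted from Theorem~\ref{thm_punctured_tile_ends}(1) together with Theorem~\ref{th_classification_two_ends}, and parts (2) and (3) are reduced to part (1) by showing that a tile sitting inside a circle or an interval must itself be an arc, after which Case~1 and Case~2 are separated because an interval is not a circle; your regular-point/properness argument for the circle case is a correct, more detailed version of the paper's one-line remark that small connected neighbourhoods of points of a circle or an interval are intervals. There is, however, one genuine gap in your forward implications. Theorem~\ref{thm_punctured_tile_ends} and Theorem~\ref{th_classification_two_ends} are established only under the standing hypothesis of Sections~\ref{Section_Ends} and~\ref{Section_Cut-points}, namely that $G$ is generated by a \emph{bounded} automaton (equivalently, that $\pcset$ is finite), whereas the corollary assumes only a contracting group with the open set condition --- a strictly larger class. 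So in your chain ``$\tile$ is an interval $\Rightarrow$ $c(\tile\setminus t)=2$ almost surely $\Rightarrow$ tile graphs two-ended almost surely $\Rightarrow$ nucleus as in Theorem~\ref{th_classification_two_ends}'', the last two arrows invoke theorems outside their proven scope; the same objection applies to your direct use of Theorem~\ref{thm_punctured_tile_ends}(2) in part (3).

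The bounded hypothesis is not cosmetic here. The paper's example of the contracting, non-bounded group generated by $a=(a^2,1,a^{-1})(0,1,2)$ has all orbital Schreier graphs equal to two-ended lines, yet its nucleus $\{1,a^{\pm 1},a^{\pm 2}\}$ is not a bounded automaton and satisfies no case of the classification; hence ``two-ended almost surely $\Rightarrow$ classified nucleus'' is false once boundedness is dropped, which is exactly the inference your proof relies on. The missing step, which the paper performs first, is to deduce boundedness from the hypothesis itself: if $\tile$ is homeomorphic to an interval, then the tiles of any fixed level are compact connected subsets of an interval, hence subintervals, and by the open set condition they have pairwise disjoint interiors, so any two of them meet in a finite set (equivalently, the tile boundary is finite); by the equivalences recalled in Section~\ref{Section_Cut-points} (finite tile intersections $\Leftrightarrow$ finite $\pcset$ $\Leftrightarrow$ bounded nucleus, the open set condition being used here), the group is generated by a bounded automaton, and only then is one ``in the settings'' of the two theorems. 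With this step inserted at the start of your forward direction of (1) --- and inherited by (2) and (3), since you reduce them to an interval tile --- the rest of your argument goes through as written.
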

\begin{proof}
Let $G$ be a contracting self-similar groups with open set condition, and let the tile $\tile$ of the
group $G$ be homeomorphic to an interval. Then the group $G$ has connected tiles and all tiles $\tile_v$
are homeomorphic to an interval. The boundary of tiles is finite, hence the group $G$ is generated by a
bounded automaton (here we use the open set condition), and we are under the settings of this section.
Since $c(\tile\setminus t)=2$ almost surely, the Schreier graphs $\Gamma_w$ with respect to the nucleus $\nucl$ have almost surely two ends, and hence $\nucl$ satisfies Theorem~\ref{th_classification_two_ends}.

It is left to prove the statements about limit spaces. A small connected neighborhood of any point of a
circle or of an interval is homeomorphic to an interval. Hence, if the limit space $\lims$ is a circle or an interval, the tile $\tile$ is homeomorphic to an
interval. Therefore we are in the settings of
Theorem~\ref{th_classification_two_ends}. As was mentioned above, in Case~1 of
Theorem~\ref{th_classification_two_ends} the limit space $\lims$ is homeomorphic to a circle, and in
Case~2 it is homeomorphic to an interval.
\end{proof}

The last corollary together with Theorem~\ref{th_Sch_2ended_binary} agree with the following result of
Nekrashevych and \v{S}uni\'{c}:

\begin{theor}[{\cite[Theorem~5.5]{img}}]
The limit dynamical system $(\lims,\si)$ of a contracting self-similar group $G$ is topologically conjugate to the tent map if and only if $G$ is equivalent as a self-similar group to one of the automata $\A_{\omega, \rho}$.
\end{theor}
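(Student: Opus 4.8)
The plan is to split the statement into a purely topological part, which is delivered by the results of this paper, and a dynamical part, which must match the shift $\si$ with the tent map. Since the tent map acts on an interval and has degree two, and since both the homeomorphism type of $\lims$ and the degree of $\si$ are conjugacy invariants, a conjugacy to the tent map forces $\lims$ to be homeomorphic to $[0,1]$ and the alphabet to satisfy $|X|=2$. By Corollary~\ref{cor_limsp_interval_circle}(3), the condition that $\lims$ is an interval is equivalent to the nucleus $\nucl$ satisfying Case~2 of Theorem~\ref{th_classification_two_ends}; and by Theorem~\ref{th_Sch_2ended_binary} the binary instances of Case~2 are exactly the automata $\A_{\omega,\rho}$. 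This settles the ``only if'' direction modulo the dynamical refinement (the circle Case~1 is excluded because a circle is not an interval), and reduces the ``if'' direction to checking that for each $G_{\omega,\rho}$ the map $\si$ is genuinely the tent map.

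For the dynamical identification I would work on the tile $\tile$, which Theorem~\ref{thm_punctured_tile_ends} shows is an interval. Over the binary alphabet one has the self-similar decomposition $\tile=\tile_0\cup\tile_1$, and the shift restricts to a homeomorphism $\si\colon\tile_x\to\tile$ for each $x$. First I would verify that $\tile_0\cap\tile_1$ is a single interior point: for $G_{\omega,\rho}$ the proof of Theorem~\ref{th_Sch_2ended_binary} gives $\pcset=\{1^{-\omega},1^{-\omega}0\}$, and the only gluing edge of the model graph identifies $1^{-\omega}00$ with $1^{-\omega}01$, so the two half-intervals meet in one turning point. Each branch of $\si$ is then a homeomorphism onto the whole interval, and continuity at the turning point forces the two branches to have opposite orientation; hence $\si$ is continuous, of degree two, and \emph{unimodal}. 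Reading off the boundary dynamics from the shift, the endpoints $1^{-\omega}$ and $1^{-\omega}0$ satisfy $\si(1^{-\omega})=1^{-\omega}$ and $\si(1^{-\omega}0)=1^{-\omega}$, while the turning point maps to $1^{-\omega}0$; identifying $1^{-\omega}\mapsto 0$ and $1^{-\omega}0\mapsto 1$ reproduces exactly the orbit data $c\mapsto 1\mapsto 0\mapsto 0$ of the tent map. Crucially, this combinatorial data is the same for every choice of $\omega$ and $\rho$, since they all share $\pcset=\{1^{-\omega},1^{-\omega}0\}$ and the same boundary gluing.

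To upgrade ``unimodal of degree two with the tent-map kneading data'' to genuine conjugacy, I would invoke the contraction of the group to establish topological exactness. If $v\in X^{n}$, then $\si^{n}$ restricts to a homeomorphism of $\tile_v$ onto $\tile$; since the tiles $\tile_v$ shrink in diameter and form a neighborhood basis, every nondegenerate subinterval $J$ contains some $\tile_v$, whence $\si^{n}(J)=\tile$. Thus $\si$ is locally eventually onto, and in particular has no wandering intervals. A standard rigidity theorem for interval maps then applies: a continuous unimodal map of degree two that is locally eventually onto is topologically conjugate to the tent map (equivalently, its topological entropy equals $\log 2$, which also follows from the conjugacy of $(\lims,\si,\muls)$ to the one-sided Bernoulli $2$-shift~\cite{bk:meas_limsp}). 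This yields the conjugacy uniformly over all $\A_{\omega,\rho}$ and completes the ``if'' direction; together with the first paragraph it gives both implications.

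The main obstacle is the dynamical rigidity step. The machinery of this paper is purely about the \emph{topology} of $\lims$ (numbers of components and ends) and about which groups produce an interval; it says nothing about the orientation behaviour of $\si$ or about expansivity. Fortunately, on an interval the fold is forced by continuity and local-eventual-ontoness follows cleanly from contraction, so the gap is narrow; the real care lies in verifying that the level-one decomposition $\tile=\tile_0\cup\tile_1$ in the \emph{original} binary coding (rather than the basic form passed to in Theorem~\ref{th_classification_two_ends}) has a single intersection point, so that $\si$ is genuinely unimodal and its kneading data is unambiguous.
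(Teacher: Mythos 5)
This theorem is not proved in the paper you are working from: it is Nekrashevych and \v{S}uni\'{c}'s result, quoted from \cite{img} only to record that it is \emph{consistent with} Corollary~\ref{cor_limsp_interval_circle} and Theorem~\ref{th_Sch_2ended_binary}. So your proposal must be judged on its own merits as an attempted derivation of the quoted theorem from the paper's results. Your ``if'' direction is essentially sound, and it fills in details the paper leaves implicit: for $\A_{\omega,\rho}$ one indeed has $\pcset=\{1^{-\omega},1^{-\omega}0\}$, the model graph has a single non-loop edge identifying $1^{-\omega}00$ with $1^{-\omega}01$, so $\tile_0\cap\tile_1$ is one point and $\si$ is a full unimodal map on the arc $\tile$ (which here coincides with $\lims$, since every edge of $\nucl\setminus\{1\}$ carries equal input and output labels); local eventual onto-ness follows from shrinking tiles, and a Parry/Milnor--Thurston type rigidity theorem then gives the conjugacy to the tent map.

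The genuine gap is in the ``only if'' direction, and it is structural rather than technical. Every invariant this paper computes --- ends of tile and Schreier graphs, cut-points of $\lims$, the classifications in Theorems~\ref{th_classification_two_ends} and \ref{th_Sch_2ended_binary}, Corollary~\ref{cor_limsp_interval_circle} --- is a function of the nucleus $\nucl$ alone, and the limit dynamical system $(\lims,\si)$ is itself determined by $\nucl$ alone (Definition~\ref{limitspacedefi}). Hence the strongest conclusion your first paragraph can extract is ``the nucleus of $G$ is one of the $\A_{\omega,\rho}$''. That is strictly weaker than the theorem's conclusion, which concerns $G$ itself up to equivalence of self-similar groups, a notion entailing group isomorphism. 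A contracting group can properly contain the group generated by its nucleus without changing the nucleus: adjoin to $D_\infty=\langle a,b\rangle$, with $a=(1,1)(0,1)$ and $b=(a,b)$, the finitary element $f=(a,1)$. Deep restrictions of words in $a,b,f$ are words in $a,b$ (since $f|_v=1$ for $|v|\geq 2$), so $\langle a,b,f\rangle$ is contracting with the same nucleus $\{1,a,b\}$, hence has literally the same limit dynamical system, the tent map; yet $a\cdot bf=(1,b)(0,1)$ satisfies $(a\cdot bf)^2=(b,b)\neq 1$ and $(a\cdot bf)^4=1$, so this group contains an element of order four, is not isomorphic to $D_\infty\cong\mathbb{Z}/2\mathbb{Z}\ast\mathbb{Z}/2\mathbb{Z}$, and is not equivalent to any $\A_{\omega,\rho}$ (an equivalence is a group isomorphism intertwining the recursions, so it carries nucleus to nucleus). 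Thus the step ``nucleus is $\A_{\omega,\rho}$ $\Rightarrow$ $G$ is equivalent to $\A_{\omega,\rho}$'' cannot be closed by the paper's machinery at all; it needs either a hypothesis tying $G$ to its nucleus (e.g.\ that $G$ is generated by it) or the precise notion of equivalence used in \cite{img}, and that bridging work is exactly what lives in \cite{img} and not here. Two smaller gaps of the same kind: you invoke Corollary~\ref{cor_limsp_interval_circle}(3) without verifying its hypotheses (connected tiles and the open set condition) for an arbitrary contracting $G$, and your claim $|X|=2$ needs the fact that $\si$ is $|X|$-to-one off a countable set, which you assert but do not prove.
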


\section{Examples}\label{Section_Examples}

\subsection{Basilica group}
The Basilica group $G$ is generated by the automaton shown in Figure~\ref{fig_BasilicaAutomaton}. This
group is the iterated monodromy group of $z^2-1$. It is torsion-free, has exponential growth, and is the
first example of amenable but not subexponentially amenable group (see \cite{gri_zuk:spect_pro}). The
orbital Schreier graphs $\gr_w$ of this group have polynomial growth of degree $2$ (see
\cite[Chapter~VI]{PhDBondarenko}). The structure of Schreier graphs $\gr_w$ was investigated in
\cite{ddmn:GraphsBasilica}. In particular, it was shown that there are uncountably many pairwise
non-isomorphic graphs $\gr_w$ and the number of ends was described. Let us show how to get the result
about ends using the developed method.

\begin{figure}
\begin{center}
\psfrag{00}{$0|0$} \psfrag{11}{$1|1$} \psfrag{10}{$1|0$} \psfrag{01}{$0|1$} \psfrag{id}{$id$}
\psfrag{a}{\large$a$} \psfrag{b}{\large$b$} \psfrag{c}{\large$c$} \psfrag{a0}{\small$a0$}
\psfrag{b0}{\small$b0$} \psfrag{c0}{\small$c0$} \psfrag{a1}{\small$a1$} \psfrag{b1}{\small$b1$}
\psfrag{c1}{\small$c1$}
\epsfig{file=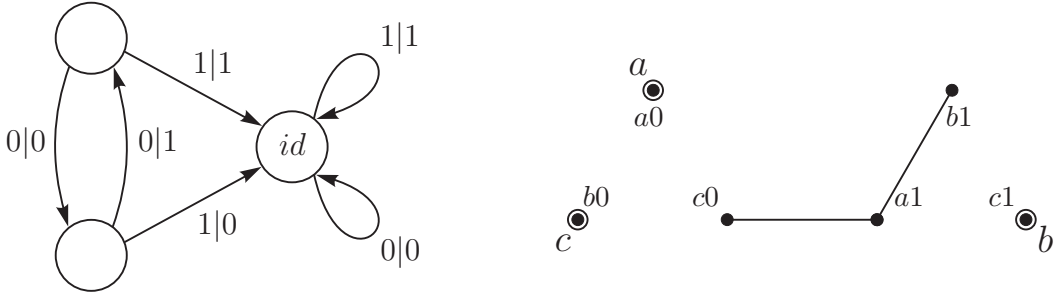,height=110pt}
\caption{Basilica automaton and its
model graph}\label{fig_BasilicaAutomaton}
\end{center}
\end{figure}

\begin{figure}
\begin{center}
\psfrag{00}{$00$} \psfrag{11}{$11$} \psfrag{10}{$10$} \psfrag{01}{$01$} \psfrag{0}{$0$} \psfrag{1}{$1$}
\psfrag{b|c}{$b|c$} \psfrag{ac|b}{$ac|b$} \psfrag{ab|c}{$ab|c$} \psfrag{ab}{$ab$} \psfrag{ac}{$ac$}
\psfrag{abc}{$abc$} \psfrag{ac|b|1}{\small$ac|b|1$} \psfrag{ab|c|1}{\small$ab|c|1$} \psfrag{ab|1}{$ab|1$}
\psfrag{ac|1}{$ac|1$} \psfrag{abc|1}{$abc|1$} \psfrag{AC}{\LARGE $\A_{c}$} \psfrag{AIC}{\LARGE $\A_{ic}$}
\epsfig{file=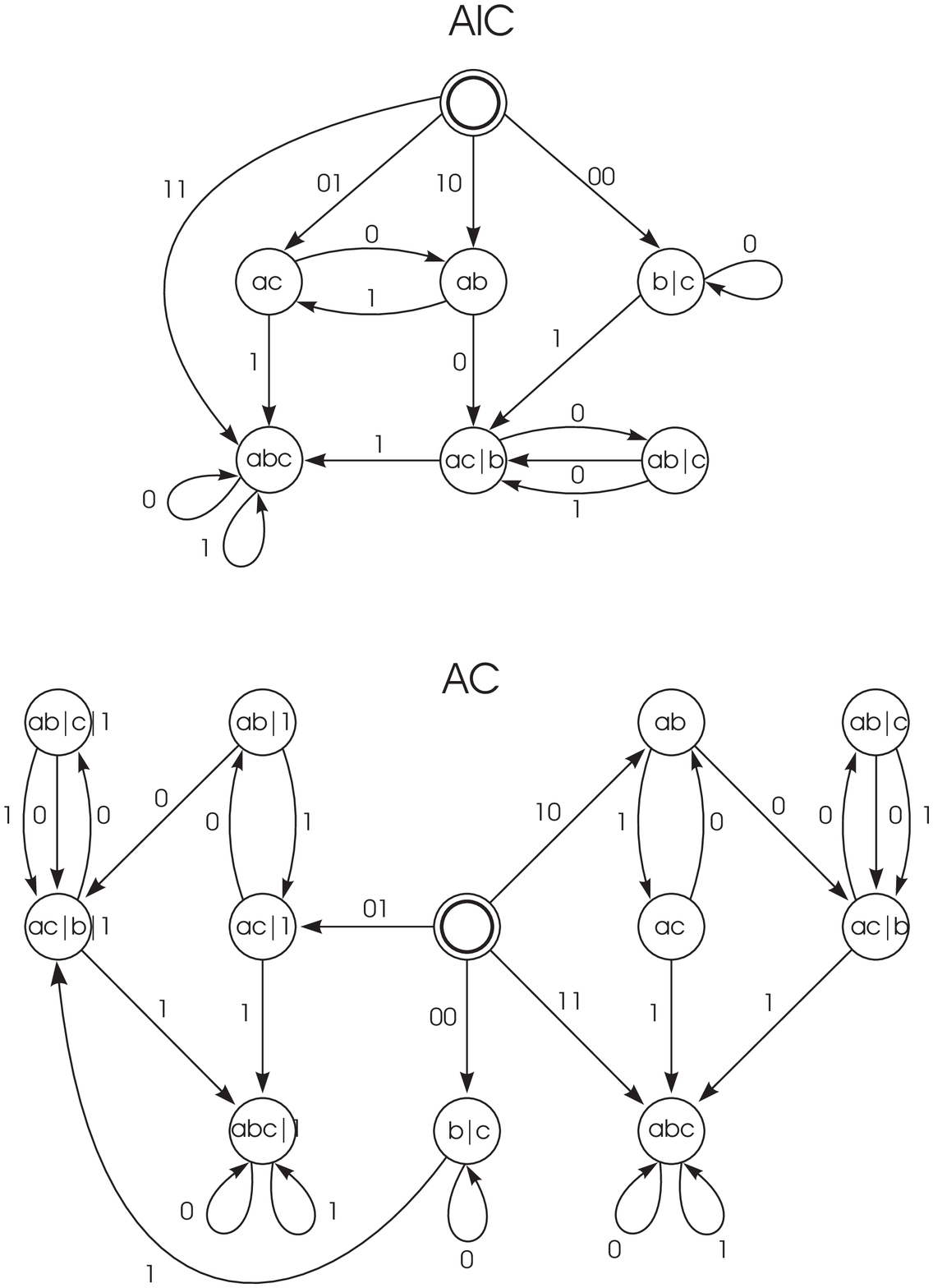,height=550pt}
\caption{The automata $\A_{ic}$ and $\A_{c}$ for Basilica
group}
\label{fig_Basilica_Ac_Aic}
\end{center}
\end{figure}

The alphabet is $X=\{0,1\}$ and the post-critical set $\pcset$ consists of three elements $a=0^{-\omega}$,
$b=(01)^{-\omega}$, $c=(10)^{-\omega}$. The model graph is shown in Figure~\ref{fig_BasilicaAutomaton}.
The automata $\A_{c}$ and $\A_{ic}$ are shown in Figure~\ref{fig_Basilica_Ac_Aic}. We get that each tile
graph $T_w$ has one or two ends, and we denote by $E_1$ and $E_2$ the corresponding sets of sequences $w$.
For the critical sequences $w=0^{\omega}$ the tile graph $T_w$ has two ends, while for the other critical
sequences $(01)^{\omega}$ and $(10)^{\omega}$ the tile graph $T_w$ has one end. Using the automaton $\A_{ic}$
the sets $E_1$ and $E_2$ can be described by Theorem~\ref{thm_number_of_ends} as follows:
\begin{eqnarray*}
E_2&=& X^{*}(0X)^{\omega}\setminus\left( Cof((01)^{\omega}\cup (10)^{\omega}) \right),\\
E_1&=&\xo\setminus E_2.
\end{eqnarray*}
Almost every tile graph $T_w$ has one end, the set $E_2$ is uncountable but of measure zero.


Every graph $T_w\setminus w$ has one, two, or three connected components, and we denote by $C_1$, $C_2$,
and $C_3$ the corresponding sets of sequences. Using the automaton $\A_c$ these sets can be described precisely
as follows:
\begin{eqnarray*}
C_3&=&\bigcup_{k\geq 0} \left( 010(10)^k0 (0X)^{\omega} \cup 000^k1 (0X)^{\omega} \right),\\
C_2&=&\bigcup_{k\geq 1} (10)^k0(0X)^{\omega} \bigcup \left(00\xo \cup 01\xo\right) \setminus C_3, \\
C_1&=&\xo\setminus \left( C_2\cup C_3\right).
\end{eqnarray*}
The set $C_3$ is uncountable but of measure zero, while the sets $C_1$ and $C_2$ are of measure~$1/2$.


Each graph $T_w\setminus w$ has one or two infinite components. The corresponding sets $IC_1$ and $IC_2$
can be described using the automaton $\A_{ic}$ as follows:
\begin{eqnarray*}
IC_2&=&\bigcup_{k\geq 1} \left( (10)^k 0 (0X)^{\omega} \cup 0(10)^k0 (0X)^{\omega} \cup 00^k1 (0X)^{\omega}
\right)
\setminus \left(Cof((01)^{\omega}\cup (10)^{\omega}) \right),\\
IC_1&=& \xo\setminus IC_2.
\end{eqnarray*}
The set $IC_2$ is uncountable but of measure zero.


\begin{figure}
\begin{center}
\psfrag{00000}{\tiny$00000$} \psfrag{10000}{\tiny$10000$} \psfrag{01000}{\tiny$01000$}
\psfrag{11000}{\tiny$11000$} \psfrag{00100}{\tiny$00100$} \psfrag{10100}{\tiny$10100$}
\psfrag{01100}{\tiny$01100$} \psfrag{11100}{\tiny$11100$} \psfrag{00010}{\tiny$00010$}
\psfrag{10010}{\tiny$10010$} \psfrag{01010}{\tiny$01010$} \psfrag{11010}{\tiny$11010$}
\psfrag{00110}{\tiny$00110$} \psfrag{10110}{\tiny$10110$} \psfrag{01110}{\tiny$01110$}
\psfrag{11110}{\tiny$11110$} \psfrag{00001}{\tiny$00001$} \psfrag{10001}{\tiny$10001$}
\psfrag{01001}{\tiny$01001$} \psfrag{11001}{\tiny$11001$} \psfrag{00101}{\tiny$00101$}
\psfrag{10101}{\tiny$10101$} \psfrag{01101}{\tiny$01101$} \psfrag{11101}{\tiny$11101$}
\psfrag{00011}{\tiny$00011$} \psfrag{10011}{\tiny$10011$} \psfrag{01011}{\tiny$01011$}
\psfrag{11011}{\tiny$11011$} \psfrag{00111}{\tiny$00111$} \psfrag{10111}{\tiny$10111$}
\psfrag{01111}{\tiny$01111$} \psfrag{11111}{\tiny$11111$} \epsfig{file=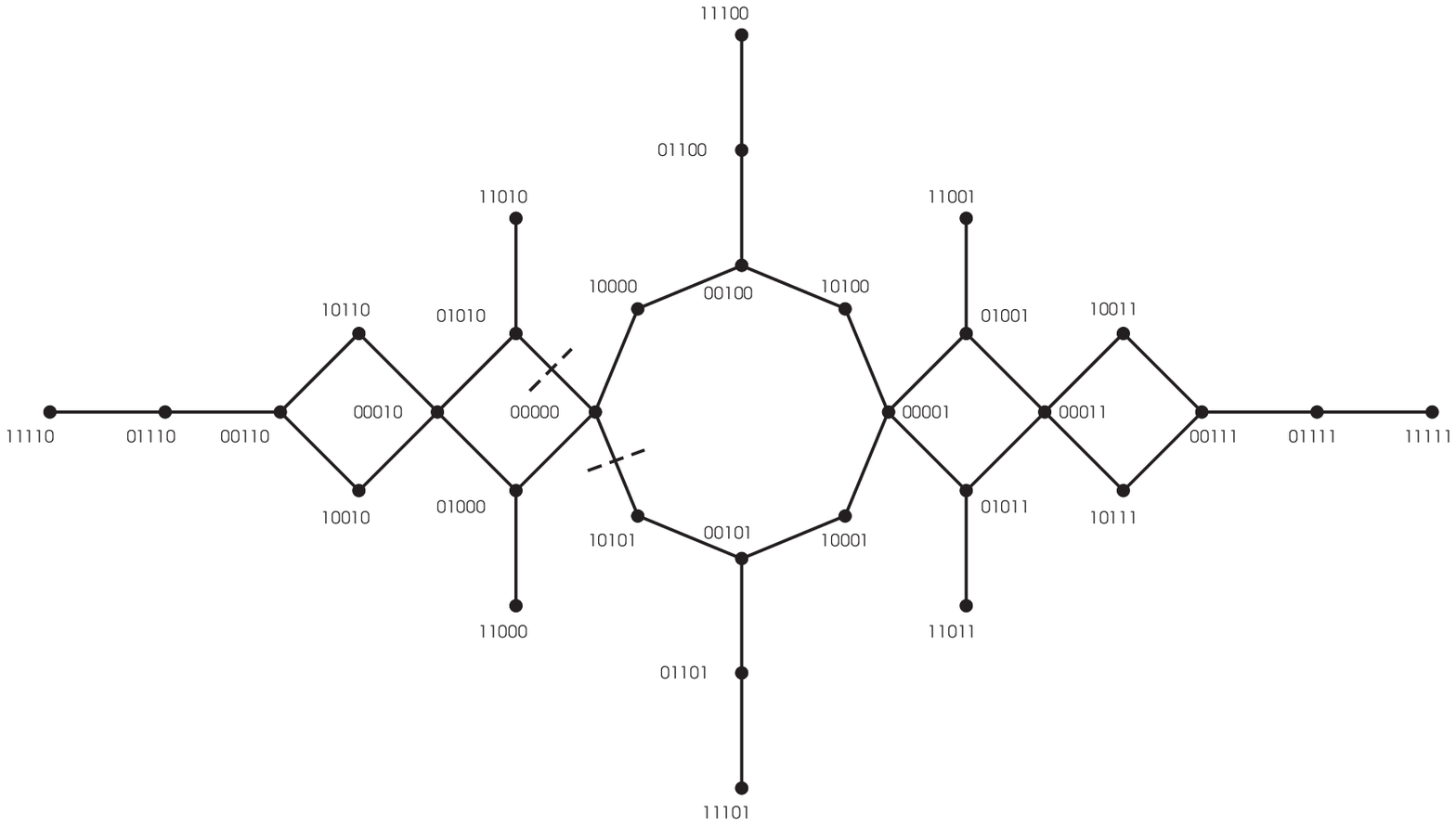,height=230pt}
\epsfig{file=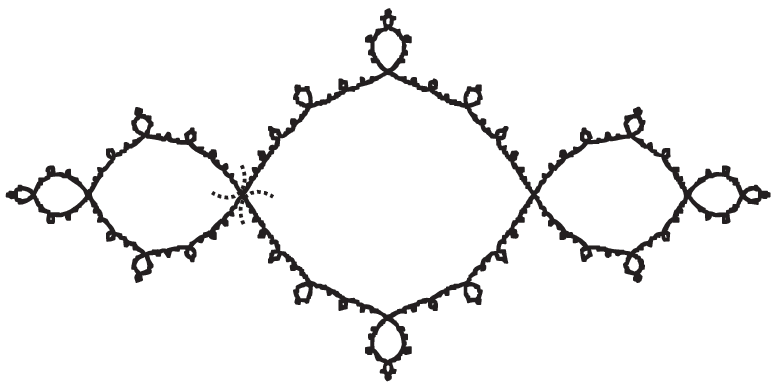,height=160pt}
\caption{The Schreier graph $\gr_{5}$ of the Basilica
group and its limit space}\label{fig_Basilica_Graph LimitSpace}
\end{center}
\end{figure}

The finite Schreier graph $\gr_n$ differs from the finite tile graph $T_n$ by two edges $\{a_n,b_n\}$ and
$\{a_n,c_n\}$. Assuming these edges one can relabel the states of the automaton $\A_c$ so that it returns
the number of components in $\gr_n\setminus v$. In this way we get that $c(\gr_n\setminus v)=1$ if the
word $v$ starts with $10$ or $11$; in the other cases $c(\gr_n\setminus v)=2$. In particular, the Schreier
graph $\gr_n$ has $2^{n-1}$ cut-vertices.

The orbital Schreier graph $\gr_w$ coincides with the the tile graph $T_w$ except when $w$ is critical. The
critical sequences $0^{\omega}$, $(01)^{\omega}$, and $(10)^{\omega}$ lie in the same orbit and the
corresponding Schreier graph consists of three tile graphs $T_{0^{\omega}}$, $T_{(01)^{\omega}}$,
$T_{(10)^{\omega}}$ with two new edges $(0^{\omega},(01)^{\omega})$ and $(0^{\omega},(10)^{\omega})$. It
follows that this graph has four ends.

The limit space $\lims$ of the group $G$ is homeomorphic to the Julia set of $z^2-1$ shown in
Figure~\ref{fig_Basilica_Graph LimitSpace}. The tile $\tile$ can be obtained from the limit space by
cutting the limit space in the way shown in the figure, or, vise versa, the limit space can be obtained
from the tile by gluing points represented by post-critical sequences $0^{-\omega}$, $(01)^{-\omega}$,
$(10)^{-\omega}$. Every point $t\in\tile$ divides the tile $\tile$ into one, two, or three connected
components. Put $\mathscr{C}=\{0^{-\omega}1, (01)^{-\omega}1, (10)^{-\omega}0\}$. Then the sets
$\mathscr{C}_1$, $\mathscr{C}_2$, and $\mathscr{C}_3$ of sequences from $\xmo$, which represent the
corresponding cut-points, can be described as follows:
\begin{eqnarray*}
\mathscr{C}_3&=& \bigcup_{n\geq 0} \mathscr{C}(0X)^n \cup \mathscr{C}(0X)^n0,\\
\mathscr{C}_2&=& \bigcup_{n\geq 0} \left(\mathscr{C}(X0)^n \cup \mathscr{C}(X0)^nX\right)\bigcup \left(
(0X)^{-\omega}\cup (X0)^{-\omega}\right)\setminus \left(\mathscr{C}_3\cup \left\{(10)^{-\omega}, (01)^{-\omega}\right\}\right),\\
\mathscr{C}_1&=&\xmo\setminus \left(\mathscr{C}_2\cup\mathscr{C}_3\right).
\end{eqnarray*}
The set $\mathscr{C}_3$ of three-section points is countable, the set $\mathscr{C}_2$ of bisection points
is uncountable and of measure zero, and the tile $\tile\setminus t$ is connected for almost all points $t$.

Every point $t\in\lims$ divides the limit space $\lims$ into one or two connected components. The
corresponding sets $\mathscr{C}'_1$ and $\mathscr{C}'_2$ can be described as follows:
\begin{eqnarray*}
\mathscr{C}'_2&=& \bigcup_{n\geq 0} \left(\mathscr{C}(X0)^n \cup \mathscr{C}(0X)^n \cup \mathscr{C}(0X)^n0
\cup \mathscr{C}(X0)^nX\right)\bigcup\\ && \bigcup \left(
(0X)^{-\omega}\cup (X0)^{-\omega}\right)\setminus \left\{(10)^{-\omega}, (01)^{-\omega}\right\},\\
\mathscr{C}'_1&=&\xmo\setminus \mathscr{C}'_2.
\end{eqnarray*}
The set $\mathscr{C}'_2$ of bisection points is uncountable and of measure zero, and the limit space
$\lims\setminus t$ is connected for almost all points $t$.

\begin{figure}
\begin{center}
\psfrag{00}{$0|0$} \psfrag{11}{$1|1$} \psfrag{10}{$1|0$} \psfrag{01}{$0|1$} \psfrag{id}{$id$}
\psfrag{22}{$2|2$} \psfrag{20}{$2|0$} \psfrag{12}{$1|2$} \psfrag{a}{\large$a$} \psfrag{b}{\large$b$}
\psfrag{a0}{\small$a0$} \psfrag{b0}{\small$b0$} \psfrag{a1}{\small$a1$} \psfrag{b1}{\small$b1$}
\psfrag{a2}{\small$a2$}
\psfrag{b2}{\small$b2$}
\epsfig{file=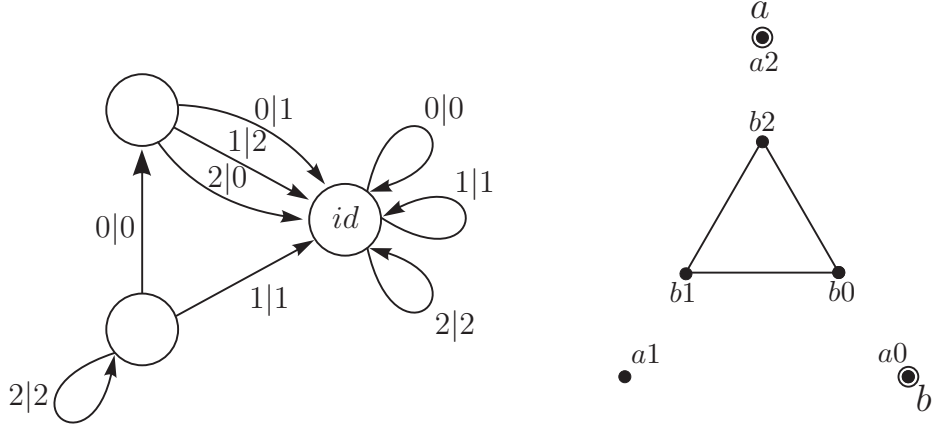,height=160pt}
\caption{Gupta-Fabrykowski
automaton and its model graph}\label{fig_GuptaFabrAutomaton}
\end{center}
\end{figure}

\subsection{Gupta-Fabrykowski group}
The Gupta-Fabrykowski group $G$ is generated by the automaton shown in Figure~\ref{fig_GuptaFabrAutomaton}.
It was constructed in \cite{gupta_fabr2} as an example of a group of intermediate growth. Also this group
is the iterated monodromy group of $z^3(-\frac{3}{2}+i\frac{\sqrt{3}}{2})+1$ (see
\cite[Example~6.12.4]{self_sim_groups}). The Schreier graphs $\gr_w$ of this group were studied in
\cite{barth_gri:spectr_Hecke}, where their spectrum and growth were computed (they have polynomial growth
of degree $\frac{\log 3}{\log 2}$).

The alphabet is $X=\{0,1,2\}$ and the post-critical set $\pcset$ consists of two elements $a=2^{-\omega}$
and $b=2^{-\omega}0$. The model graph is shown in Figure~\ref{fig_GuptaFabrAutomaton}. The automata
$\A_{c}$ and $\A_{ic}$ are shown in Figure~\ref{fig_GuptaFabr_Ac_Aic}. Every Schreier graph $\gr_w$
coincides with the tile graph $T_w$. We get that every tile graph $T_w$ has one or two ends, and we denote
by $E_1$ and $E_2$ the corresponding sets of sequences. For the only critical sequence $2^{\omega}$ the
tile graph $T_w$ has one end. Using the automaton $\A_{ic}$ the sets $E_1$ and $E_2$ can be described by
Theorem~\ref{thm_number_of_ends} as follows:
\begin{eqnarray*}
E_2=X^{*} \{0,2\}^{\omega}\setminus Cof(2^{\omega}),\qquad E_1=\xo\setminus E_2.
\end{eqnarray*}
Almost every tile graph has one end, the set $E_2$ is uncountable but of measure zero.

\begin{figure}
\begin{center}
\psfrag{0}{$0$} \psfrag{1}{$1$} \psfrag{a}{$a$} \psfrag{b}{$b$} \psfrag{a|b}{$a|b$} \psfrag{ab}{$ab$}
\psfrag{ab|1}{$ab|1$} \psfrag{AC}{\LARGE $\A_{c}$} \psfrag{AIC}{\LARGE $\A_{ic}$}

\epsfig{file=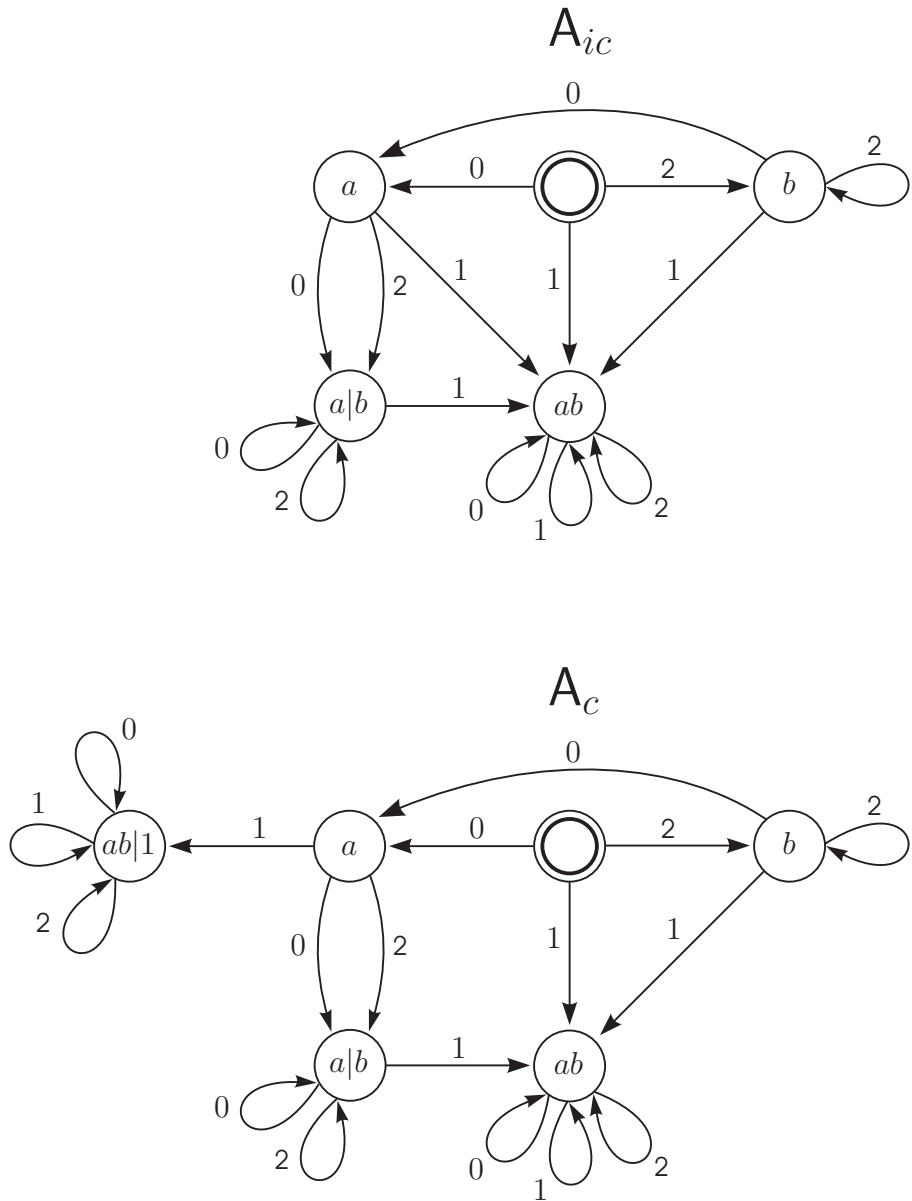,height=450pt}
\caption{The automata $\A_{ic}$ and $\A_{c}$ for
Gupta-Fabrykowski group}\label{fig_GuptaFabr_Ac_Aic}
\end{center}
\end{figure}

Every graph $T_w\setminus w$ has one or two connected components, and we denote by $C_1$ and $C_2$ the
corresponding sets of sequences. Using the automaton $\A_c$ these sets can be described precisely as follows:
\begin{eqnarray*}
C_2= \bigcup_{k\geq 0} \left(2^k01\xo\cup 2^k0\{0,2\}^{\omega}\right), \qquad C_1=\xo\setminus C_2.
\end{eqnarray*}
The sets $C_1$ and $C_2$ have measure $\frac{5}{6}$ and $\frac{1}{6}$ respectively.

Every graph $T_w\setminus w$ has one or two infinite components. The corresponding sets $IC_1$ and $IC_2$
can be described using the automaton $\A_{ic}$ as follows:
\begin{eqnarray*}
IC_2=\bigcup_{k\geq 0} 2^k0\{0,2\}^{\omega} \setminus Cof(2^{\omega}),\qquad  IC_1=\xo\setminus IC_2.
\end{eqnarray*}
The set $IC_2$ is uncountable but of measure zero.

The limit space $\lims$ and the tile $\tile$ of the group $G$ are homeomorphic to the Julia set of the map
$z^3(-\frac{3}{2}+i\frac{\sqrt{3}}{2})+1$ shown in Figure~\ref{fig_GuptaFabr_Graph LimitSpace}. Every
point $t\in\lims$ divides the limit space into one, two, or three connected components. The sets
$\mathscr{C}_1$, $\mathscr{C}_2$, and $\mathscr{C}_3$ of sequences from $\xmo$, which represent the
corresponding points, can be described as follows:
\begin{eqnarray*}
\mathscr{C}_3&=&2^{-\omega}0X^{*}\setminus \{2^{-\omega}0\},\\
\mathscr{C}_2&=&\{0,2\}^{-\omega}\setminus \left(\mathscr{C}_3\cup \{2^{-\omega}, 2^{-\omega}0\}\right),\\
\mathscr{C}_1&=&\xmo\setminus \left(\mathscr{C}_2\cup\mathscr{C}_3\right).
\end{eqnarray*}
The set $\mathscr{C}_3$ of three-section points is countable, the set $\mathscr{C}_2$ of bisection points
is uncountable and of measure zero, and the limit space $\lims\setminus t$ is connected for almost all
points $t$.

\begin{figure}
\begin{center}
\psfrag{000}{\tiny$000$} \psfrag{100}{\tiny$100$} \psfrag{200}{\tiny$200$} \psfrag{010}{\tiny$010$}
\psfrag{110}{\tiny$110$} \psfrag{210}{\tiny$210$} \psfrag{020}{\tiny$020$} \psfrag{120}{\tiny$120$}
\psfrag{220}{\tiny$220$} \psfrag{001}{\tiny$001$} \psfrag{101}{\tiny$101$} \psfrag{201}{\tiny$201$}
\psfrag{011}{\tiny$011$} \psfrag{111}{\tiny$111$} \psfrag{211}{\tiny$211$} \psfrag{021}{\tiny$021$}
\psfrag{121}{\tiny$121$} \psfrag{221}{\tiny$221$} \psfrag{002}{\tiny$002$} \psfrag{102}{\tiny$102$}
\psfrag{202}{\tiny$202$} \psfrag{012}{\tiny$012$} \psfrag{112}{\tiny$112$} \psfrag{212}{\tiny$212$}
\psfrag{022}{\tiny$022$} \psfrag{122}{\tiny$122$} \psfrag{222}{\tiny$222$}
\epsfig{file=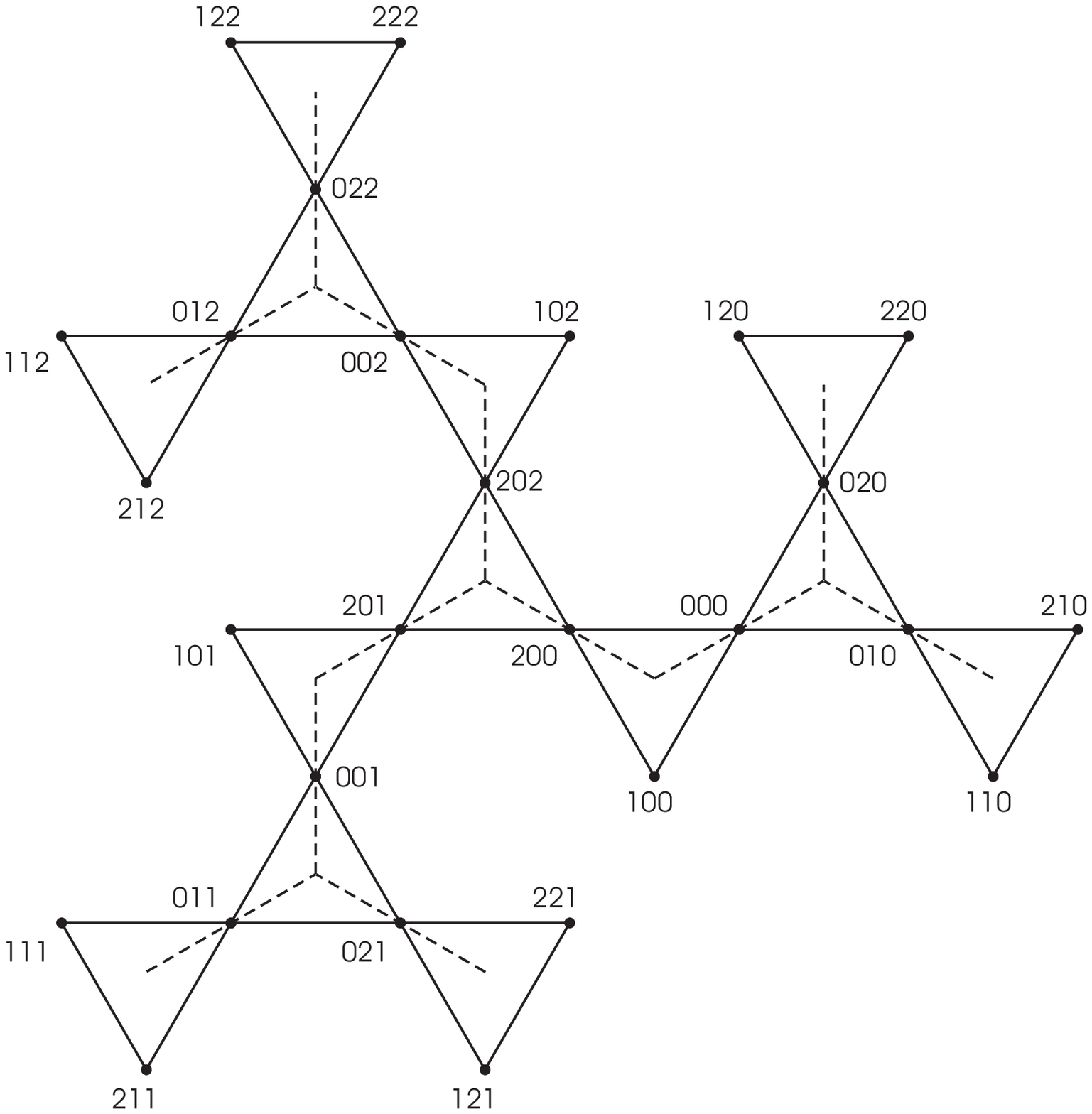,height=200pt} \epsfig{file=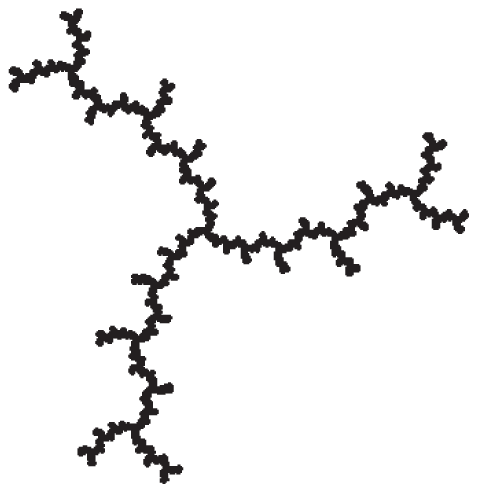,height=200pt}
\caption{The
Schreier graph $\gr_{3}$ of the Gupta-Fabrykowski group and its limit space}\label{fig_GuptaFabr_Graph
LimitSpace}
\end{center}
\end{figure}


\subsection{Iterated monodromy group of $z^2+i$}

The iterated monodromy group of $z^2+i$ is generated by the automaton shown in
Figure~\ref{fig_IMGz2iAutomaton}. This group is one more example of a group of intermediate growth (see
\cite{bux_perez:img}). The algebraic properties of $IMG(z^2+i)$ were studied in \cite{img_z2_i}. The
Schreier graphs $\gr_w$ of this group have polynomial growth of degree $\frac{\log 2}{\log \lambda}$, where
$\lambda$ is the real root of $x^3-x-2$ (see \cite[Chapter~VI]{PhDBondarenko}).

\begin{figure}
\begin{center}
\psfrag{00}{$0|0$} \psfrag{11}{$1|1$} \psfrag{10}{$1|0$} \psfrag{01}{$0|1$} \psfrag{id}{$id$}
\psfrag{a}{\large$a$} \psfrag{b}{\large$b$} \psfrag{c}{\large$c$} \psfrag{a0}{\small$a0$}
\psfrag{b0}{\small$b0$} \psfrag{c0}{\small$c0$} \psfrag{a1}{\small$a1$} \psfrag{b1}{\small$b1$}
\psfrag{c1}{\small$c1$}\epsfig{file=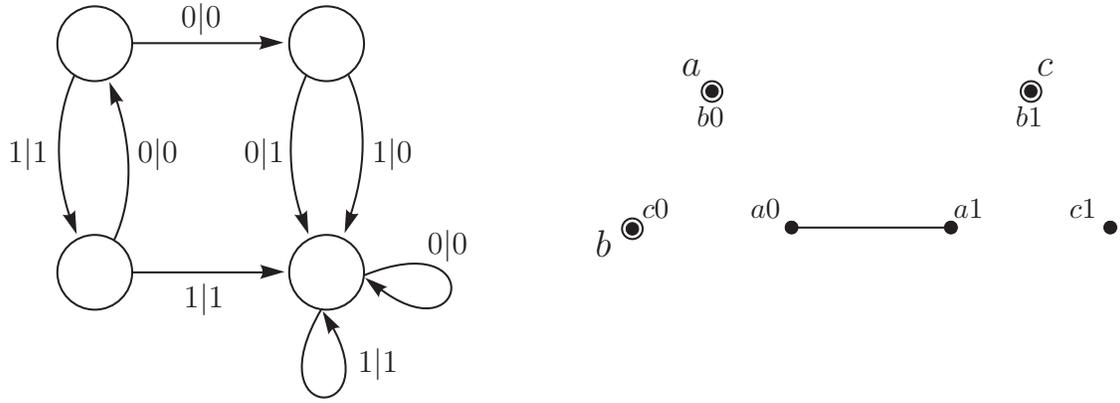,height=150pt}
\caption{$IMG(z^2+i)$ automaton and
its model graph}\label{fig_IMGz2iAutomaton}
\end{center}
\end{figure}

The alphabet is $X=\{0,1\}$ and the post-critical set $\pcset$ consists of three elements
$a=(10)^{-\omega}0$, $b=(10)^{-\omega}$, and $c=(01)^{-\omega}$. The model graph is shown in
Figure~\ref{fig_IMGz2iAutomaton}. The automata $\A_{c}$ and $\A_{ic}$ are shown in
Figure~\ref{fig_IMGz2i_Aic}. Every Schreier graph $\gr_w$ coincides with the tile graph $T_w$ and it is a
tree. We get that every tile graph $T_w$ has one, two, or three ends, and we denote by $E_1$, $E_2$, and
$E_3$ the corresponding sets of sequences. Using the automaton $\A_{ic}$ the sets $E_1$, $E_2$, $E_3$ can be
described by Theorem~\ref{thm_number_of_ends} as follows. For the both critical sequences
$(10)^{\omega}$ and $(01)^{\omega}$ the tile graph $T_w$ has one end. Denote by $\mathscr{R}$ the right one-sided
sofic subshift given by the subgraph emphasized in Figure~\ref{fig_IMGz2i_Aic}. Then
\begin{eqnarray*}
E_3= Cof(0^{\omega}), \ \ E_2=X^{*}\mathscr{R} \setminus Cof(0^{\omega}\cup (10)^{\omega}\cup
(01)^{\omega}), \ \ E_1=\xo\setminus E_2.
\end{eqnarray*}
(We cannot describe these sets in
the way we did with the previous examples, because the subshift $\mathscr{R}$ is not of finite type).
Almost every tile graph has one end, the set $E_2$ is uncountable but of measure zero, and there is one
graph, namely $T_{0^{\omega}}$, with three ends. This example shows that
Corollary~\ref{cor_>2ends_pre_periodic} may hold for regular sequences (here $0^{\omega}$ is regular).

\begin{figure}
\begin{center}
\psfrag{00}{$00$} \psfrag{11}{$11$} \psfrag{10}{$10$} \psfrag{01}{$01$} \psfrag{0}{$0$} \psfrag{1}{$1$}
\psfrag{b|c}{$b|c$} \psfrag{bc}{$bc$} \psfrag{a|bc}{$a|bc$} \psfrag{ac|b}{$ac|b$} \psfrag{ab|c}{$ab|c$}
\psfrag{ab}{$ab$} \psfrag{ac}{$ac$} \psfrag{abc}{$abc$} \psfrag{a|b|c}{$a|b|c$} \psfrag{AC}{\LARGE
$\A_{c}$} \psfrag{AIC}{\LARGE $\A_{ic}$} \epsfig{file=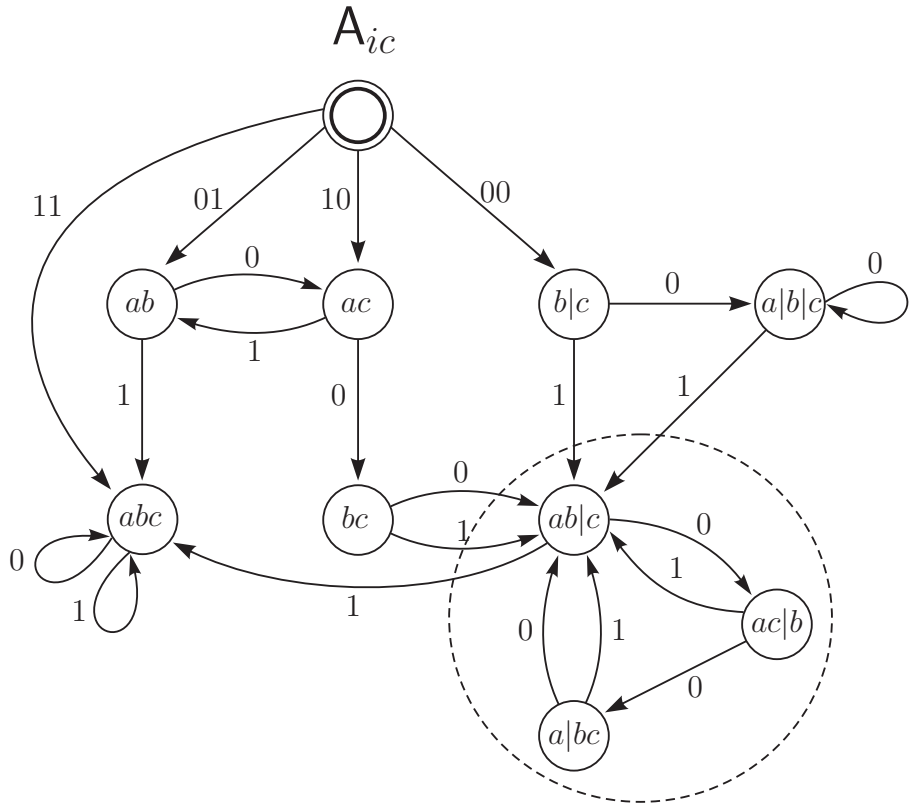,height=300pt}
\caption{The automaton
$\A_{ic}$ for $IMG(z^2+i)$}\label{fig_IMGz2i_Aic}
\end{center}
\end{figure}

\begin{figure}
\begin{center}
\psfrag{00}{$00$} \psfrag{11}{$11$} \psfrag{10}{$10$} \psfrag{01}{$01$} \psfrag{0}{$0$} \psfrag{1}{$1$}
\psfrag{b|c}{$b|c$} \psfrag{bc}{$bc$} \psfrag{bc|1}{$bc|1$}\psfrag{a|bc}{$a|bc$} \psfrag{ac|b}{$ac|b$}
\psfrag{ab|c}{$ab|c$} \psfrag{ab}{$ab$} \psfrag{ac}{$ac$} \psfrag{abc}{$abc$} \psfrag{a|b|c}{$a|b|c$}
\psfrag{ab|1}{$ab|1$} \psfrag{ac|1}{$ac|1$} \psfrag{abc|1}{$abc|1$} \psfrag{a|bc|1}{\small$a|bc|1$}
\psfrag{ac|b|1}{\small$ac|b|1$} \psfrag{ab|c|1}{\small$ab|c|1$} \psfrag{AC}{\LARGE $\A_{c}$}
\psfrag{AIC}{\LARGE $\A_{ic}$} \epsfig{file=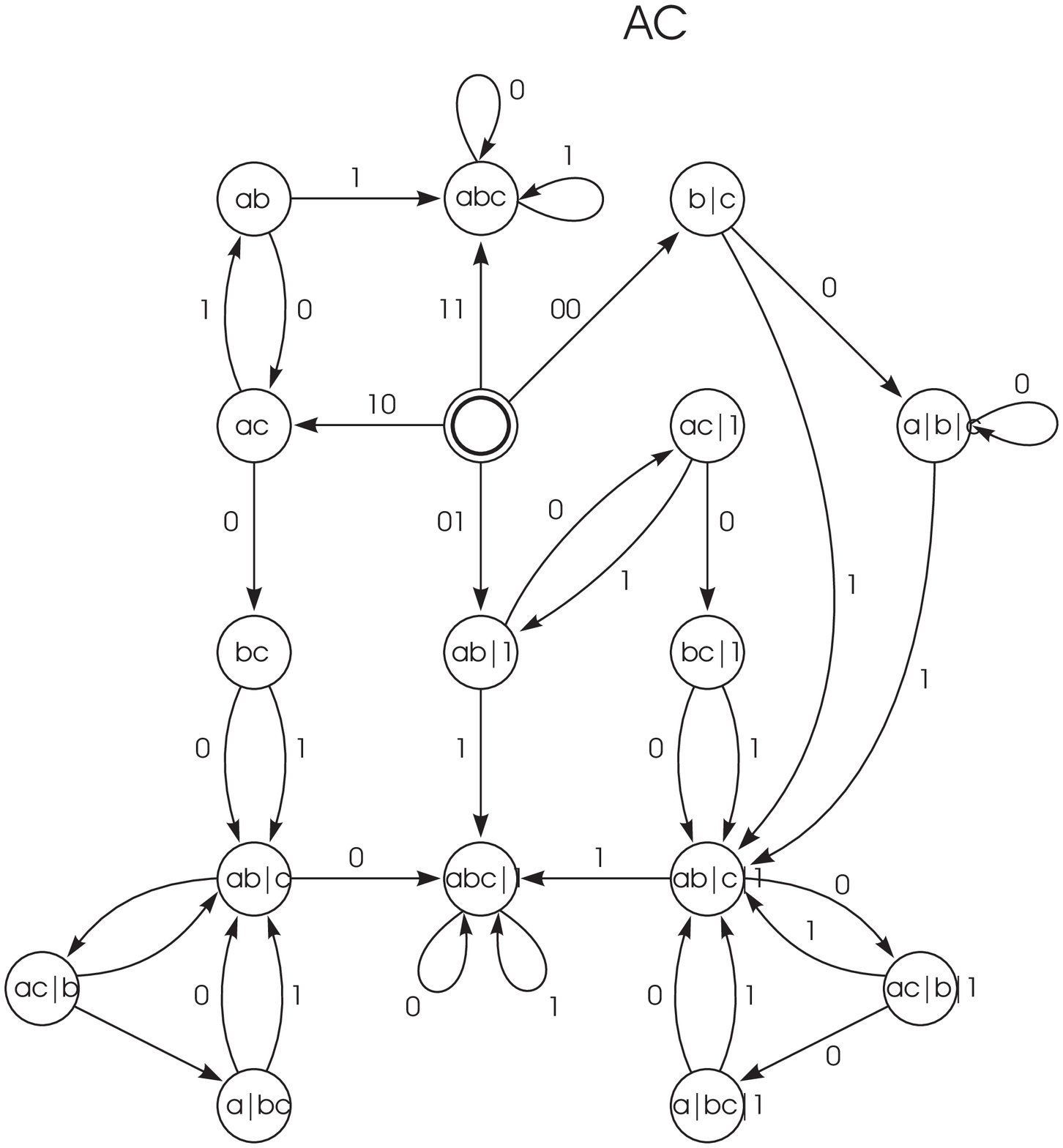,height=450pt}
\caption{The automaton $\A_{c}$ for
$IMG(z^2+i)$}\label{fig_IMGz2i_Ac}
\end{center}
\end{figure}

Every graph $T_w\setminus w$ has one, two, or three connected components, and we denote by $C_1$, $C_2$,
and $C_3$ the corresponding sets of sequences. Using the automaton $\A_c$ these sets can be described precisely
as follows:
\begin{eqnarray*}
C_3=\bigcup_{k\geq 0} 0(10)^k0X\mathscr{R}\bigcup_{k\geq 2} 0^k1\mathscr{R}\bigcup \{0^{\omega}\},\quad
C_2=\xo\setminus \left(C_3\cup C_1\right),\quad C_1=\bigcup_{k\geq 0} 1(01)^k1\xo.
\end{eqnarray*}
The set $C_3$ is of measure zero, and the sets $C_1$ and $C_2$ have measure $\frac{1}{3}$ and $\frac{2}{3}$
respectively.

Every graph $T_w\setminus w$ has one, two, or three infinite components. The corresponding sets $IC_1$ and
$IC_2$ can be described using the automaton $\A_{ic}$ as follows:
\begin{eqnarray*}
IC_2&=&\bigcup_{k\geq 1} \left(0^k01\mathscr{R}\cup (10)^k0X\mathscr{R}\cup 0(10)^k0X\mathscr{R}\right),\\
IC_3&=&\{0^{\omega}\},\quad IC_1=\xo\setminus \left(IC_2\cup IC_3\right).
\end{eqnarray*}
The set $IC_2$ is uncountable but of measure zero.

\begin{figure}
\begin{center}
\psfrag{00000}{\tiny$00000$} \psfrag{10000}{\tiny$10000$} \psfrag{01000}{\tiny$01000$}
\psfrag{11000}{\tiny$11000$} \psfrag{00100}{\tiny$00100$} \psfrag{10100}{\tiny$10100$}
\psfrag{01100}{\tiny$01100$} \psfrag{11100}{\tiny$11100$} \psfrag{00010}{\tiny$00010$}
\psfrag{10010}{\tiny$10010$} \psfrag{01010}{\tiny$01010$} \psfrag{11010}{\tiny$11010$}
\psfrag{00110}{\tiny$00110$} \psfrag{10110}{\tiny$10110$} \psfrag{01110}{\tiny$01110$}
\psfrag{11110}{\tiny$11110$} \psfrag{00001}{\tiny$00001$} \psfrag{10001}{\tiny$10001$}
\psfrag{01001}{\tiny$01001$} \psfrag{11001}{\tiny$11001$} \psfrag{00101}{\tiny$00101$}
\psfrag{10101}{\tiny$10101$} \psfrag{01101}{\tiny$01101$} \psfrag{11101}{\tiny$11101$}
\psfrag{00011}{\tiny$00011$} \psfrag{10011}{\tiny$10011$} \psfrag{01011}{\tiny$01011$}
\psfrag{11011}{\tiny$11011$} \psfrag{00111}{\tiny$00111$} \psfrag{10111}{\tiny$10111$}
\psfrag{01111}{\tiny$01111$} \psfrag{11111}{\tiny$11111$} \epsfig{file=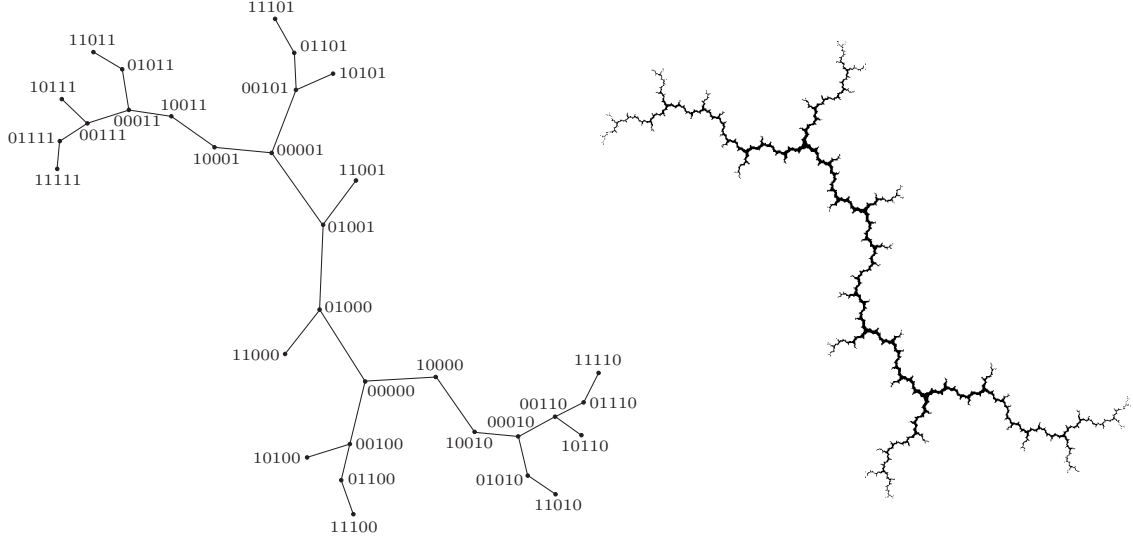,height=200pt}
\caption{The Schreier graph $\gr_{5}$ of the IMG($z^2+i$) and its limit space}\label{fig_IMG_Graph
LimitSpace}
\end{center}
\end{figure}

The limit space $\lims[]$ and the tile $\tile$ of the group $IMG(z^2+i)$ are homeomorphic to the Julia set
of the map $z^2+i$ shown in Figure~\ref{fig_IMG_Graph LimitSpace}. Every point $t\in\lims[]$ divides the
limit space into one, two, or three connected components. The sets $\mathscr{C}_1$, $\mathscr{C}_2$, and
$\mathscr{C}_3$ of sequences from $\xmo$, which represent the corresponding points, can be described as
follows:
\begin{eqnarray*}
\mathscr{C}_3= Cof(0^{-\omega}),\quad \mathscr{C}_2= \mathscr{L} X^{*}\setminus \mathscr{C}_3,\quad
\mathscr{C}_1= \xmo\setminus \left(\mathscr{C}_2\cup \mathscr{C}_3\right),
\end{eqnarray*}
where $\mathscr{L}$ is the left one-sided sofic subshift given by the subgraph emphasized in
Figure~\ref{fig_IMGz2i_Aic}. The set $\mathscr{C}_3$ of three-section points is countable, the set
$\mathscr{C}_2$ of bisection points is uncountable and of measure zero, and the limit space
$\lims[]\setminus t$ is connected for almost all points $t$.

\end{document}